\title{Mixing Homomorphisms, Recolourings, and Extending Circular Precolourings}
\author{Richard~C.~Brewster\thanks{Research supported by the Natural Science and Engineering Research Council of Canada (NSERC).} \\
Dept of Math and Stats \\
Thompson Rivers University \\
{\small \url{rbrewster@tru.ca}}
\and 
Jonathan~A.~Noel\thanks{Research done at Thompson Rivers University and
supported by the NSERC Undergraduate Student Research Awards program.} \\
Mathematical Institute \\ University of Oxford \\
{\small \url{noel@maths.ox.ac.uk}}
}
\newtheorem{thm}{Theorem}[section]
\newtheorem{lem}[thm]{Lemma}
\newtheorem{prop}[thm]{Proposition}
\newtheorem{conj}[thm]{Conjecture}
\newtheorem{ques}[thm]{Question}
\newtheorem{claim}[thm]{Claim}
\newtheorem{cor}[thm]{Corollary}
\newtheorem*{thm:ext}{Theorem~\ref{thm:extcirc}}
\theoremstyle{definition}
\newtheorem{defn}[thm]{Definition}
\newtheorem{case}{Case}
\newtheorem{note}[thm]{Note}
\theoremstyle{remark}
\newtheorem{rem}[thm]{Remark}
\begin{document}
\maketitle

\begin{abstract}
This work brings together ideas of mixing graph colourings, discrete homotopy, 
and precolouring extension. A particular focus is circular colourings. 
We prove that all the $(k,q)$-colourings of a graph $G$ can be obtained
by successively recolouring a single vertex provided $k/q\geq 2col(G)$ 
along the lines of Cereceda, van den Heuvel and Johnson's result
for $k$-colourings.  We give various bounds for such mixing results and
discuss their sharpness, including cases where the bounds for circular and classical
colourings coincide.
As a corollary, we obtain an Albertson-type extension theorem for
$(k,q)$-precolourings of circular cliques. Such a result was first conjectured
by Albertson and West. 

General results on homomorphism mixing are presented, including a characterization 
of graphs $G$ for which the endomorphism monoid can be generated through the mixing process. 
As in similar work of Brightwell and Winkler, the concept of dismantlability plays a key role.

\end{abstract}


%
%
%
%
%
%
%

%

\section{Introduction}

Given a graph $G$ and a $k$-colouring $c:V(G)\to\{0,1,\dots,k-1\}$ the \emph{mixing process} is the following:
\begin{enumerate}
\item Choose a vertex $v\in V(G)$;
\item Change the colour of $v$ (if possible) to yield a different $k$-colouring $c':V(G)\to\{0,1,\dots,k-1\}$.
\end{enumerate}
A natural problem arises: Can every $k$-colouring of $G$ eventually be generated by
repeating this process? If so, we say that $G$ is \emph{$k$-mixing}.
(As a notation convention throughout the paper, the term $k$-colouring
refers to \emph{proper $k$-colouring}.)

The problem of determining whether a graph is $k$-mixing and related
problems have been studied in a recent series of 
papers~\cite{PSPACE, Complexity,Connectedness,Btwn3colourings} (see also the survey of van den Heuvel~\cite{mixingsurvey}). 
The main objective of this paper is to investigate mixing  problems in 
the more general setting of graph homomorphisms, and specifically circular colourings.
We establish bounds on the number of colours required to ensure a graph $G$
is mixing with respect to circular colourings.  The bounds are in terms of the 
colouring number and clique number, along the lines of~\cite{Connectedness}. 
As a corollary, we obtain an Albertson-type extension theorem for
$(k,q)$-precolourings of circular cliques. Such a result was first conjectured
by Albertson and West~\cite{AlbWest} and studied in~\cite{BrewsterNoel}.
Using general results on homomorphism mixing, 
we characterize graphs for which the mixing process generates the entire 
endomorphism monoid (cf.~\cite{Winkler}). 

For general graph theory terminology and notation, we follow~\cite{BondyMurty}.
We consider finite undirected graphs without multiple edges. 
We are mainly interested in loop-free graphs, with the exception of 
Section~\ref{MixHom} where vertices with loops are a natural construct. 
A vertex with a loop is said to be \emph{reflexive}. 
The subgraph induced by the reflexive vertices is 
the \emph{reflexive subgraph}.   We use both $uv$ and $u \sim v$ to indicate
adjacency using the latter when needed for notational clarity. 

Given graphs $G$ and $H$, a \emph{homomorphism} of $G$ to $H$ is a mapping 
$\varphi:V(G)\to V(H)$ such that $\varphi(u)\varphi(v)\in E(H)$ whenever 
$uv\in E(G)$. We write $G\to H$ to indicate the existence of a homomorphism 
of $G$ to $H$, and write $\varphi:G\to H$ when referring to a specific
homomorphism $\varphi$. We let $HOM(G,H)$ denote the collection of 
homomorphisms $G\to H$. Within this framework, we can view a $k$-colouring of 
$G$ as a homomorphism $G\to K_k$. Consequently, we refer to the images of 
$\varphi$, i.e. vertices of $H$, as \emph{colours}, and say $G$ is 
\emph{$H$-colourable}.
We refer the reader to~\cite{HellNesetril} for an in-depth study of graph
homomorphisms.

Given positive integers $k,q$ with $k \geq 2q$, the \emph{circular clique},
$G_{k,q}$ has vertex set $\{ 0, 1, \dots, k-1 \}$ with $ij$ an edge when 
$q \leq |i-j| \leq k-q$.
A homomorphism $\varphi:G\to G_{k,q}$ is called a \emph{circular colouring}
in general, and a \emph{$(k,q)$-colouring} of $G$ for the specific pair $(k,q)$. 
We remark that $G_{k,1}$ is isomorphic to $K_k$ and so a $(k,1)$-colouring
is simply a $k$-colouring. As it turns out, $G_{k',q'}\to G_{k,q}$ if and only if 
$k'/q'\leq k/q$ (see for example~\cite{BondyHell}). 
The \emph{circular chromatic number} of a graph $G$ is defined 
analogously to the chromatic number:
\[\chi_c(G):=\inf\{k/q:G\to G_{k,q}\}.\] 

It is well known~\cite{Vince} that $\chi(G)=\left\lceil\chi_c(G)\right\rceil$
and $\chi_c(G)=\min\{k/q:G\to G_{k,q}\}$. 
Bondy and Hell gave a purely combinatorial proof of the latter result
in~\cite{BondyHell}. In their proof, they use the fact that 
non-(vertex)-surjective $(k,q)$-colourings cannot minimize $k/q$. 
That is, if $\gcd(k,q)=1$, then there is a pair $(k',q')$ such that $k'/q'<k/q$ 
and $(G_{k,q}-0)\to G_{k',q'}$. In this paper, we often exploit this mapping. 
The relationship between $(k',q')$ and $(k,q)$ is captured by the following.

\begin{defn}\label{defn:lowerparent}
Let $k,q,k',q'$ be non-negative integers where $q,q'\neq0$ and 
$\gcd(k,q)=1$. Further, suppose that
\[kq'-k'q=1.\]
If $q\geq q'\geq1$, then $(k',q')$ is said to be the \emph{lower parent} 
of $(k,q)$. 
\end{defn}

It can be shown that lower parents always exist and are unique. See, for 
example, Lemma~6.6 of~\cite{HellNesetril}. Lower parents are related to Farey sequences, where the lower parent $k'/q'$ 
immediately precedes $k/q$ in the Farey sequence $F_q$.
For more background on the circular chromatic number, we refer the reader 
to the survey of Zhu~\cite{Zhu}.

As is done in~\cite{Connectedness}, we view mixing problems in terms of the 
connectedness of a certain auxiliary graph. Given graphs 
$G$ and $H$ such that $G\to H$, define the \emph{$H$-colour graph} of $G$, denoted 
$\mathscr{C}_H(G)$, to be the graph on vertex set $HOM(G,H)$ where $f\sim g$ 
if $f(v)\neq g(v)$ for exactly one $v\in V(G)$. The graph $G$ is said to be 
\emph{$H$-mixing} if $\mathscr{C}_H(G)$ is connected.  

For colourings, let us define the \emph{$k$-colour graph} and \emph{$(k,q)$-colour graph} 
of $G$ by $\mathscr{C}_k(G):=\mathscr{C}_{K_k}(G)$ and 
$\mathscr{C}_{k,q}(G):=\mathscr{C}_{G_{k,q}}(G)$ respectively. Thus, $G$ is 
$k$-mixing or $(k,q)$-mixing if the corresponding colour graph is connected.

In~\cite{Connectedness} a family $L_m, m\geq 4$, of bipartite graphs is constructed
with the property that $L_m$ is $k$-mixing if and only if $k \geq 3$ and $k \neq m$.  
Consider the following definition which captures this situation.

\begin{defn}\label{defn:mt}
Define the \emph{mixing number} $\mathfrak{m}(G)$ and \emph{circular mixing
number} $\mathfrak{m}_c(G)$ by:
\begin{eqnarray*}
\mathfrak{m}(G) & := & \min\{k\in\mathbb{N}: k\geq\chi(G) \text{ and } G \text{ is } k\text{-mixing}\}; \\
\mathfrak{m}_c(G) & := & \inf\{k/q\in\mathbb{Q}: k/q\geq\chi_c(G) \text{ and } G \text{ is } (k,q)\text{-mixing}\}.
\end{eqnarray*}
Define the \emph{mixing threshold} $\mathfrak{M}(G)$ and \emph{circular mixing threshold} $\mathfrak{M}_c(G)$ as:
\begin{eqnarray*}
\mathfrak{M}(G) & := & \min\{r\in\mathbb{N}: r\geq\chi(G) \text{ and } G \text{ is } k\text{-mixing for all } k\geq r\}; \\
\mathfrak{M}_c(G) & := &\inf\{r\in\mathbb{Q}: r\geq\chi_c(G) \text{ and } G \text{ is } (k,q)\text{-mixing for all } k/q\geq r\}.
\end{eqnarray*}
\end{defn}

For the family $L_m$ mentioned above we have $\chi(L_m) = 2, \mathfrak{m}(L_m) = 3$, and $\mathfrak{M}(L_m) = m+1$. 
Thus $\mathfrak{M}$ is not bounded above by any function of $\mathfrak{m}$ (or of $\chi$).
%

\subsection*{Our Results}

In Section~\ref{degen} we show $\mathfrak{M}_c(G) \leq 2 \, col(G)$, where
$col(G)$ is the \emph{colouring number} of $G$ (defined below).
This is analogous to the result $\mathfrak{M}(G) \leq col(G)+1$ proved in~\cite{Connectedness}.  
The factor of $2$ required for the circular colouring bound versus the (classical) colouring bound 
is consistent with similar bounds on the list chromatic number for circular and classical colourings
respectively.

In Section~\ref{MixHom} we study mixing in general 
homomorphisms, where we identify a connection between mixing problems and a 
homotopy theory for graph homomorphisms. Specifically for $G$ and $H$,
we generalize the colouring graph of~\cite{Connectedness} to homomorphisms 
and show the connected components correspond to the connected components
of the reflexive subgraph of $H^G$ (see Proposition~\ref{colourHom}).
We obtain a characterization of graphs $G$ which are $G$-mixing 
(i.e. $G$ for which the endomorphism monoid can be generated by the mixing process) using the
concept of dismantability.  We also use dimantability to show for trees and complete
bipartite graphs $\mathfrak{M}_c = \mathfrak{m}_c = 2$ (see Proposition~\ref{K_mmtrees}).

Section~\ref{parents} is devoted to investigating the role of non-surjective colourings and 
lower parents in $(k,q)$-mixing problems with one goal being the establishment of upper
bounds on $\mathfrak{M}_c$.  In~\cite{BrewsterNoel} we give examples
where $\mathfrak{M}_c > \mathfrak{M}$.  Here we give the following upper bounds:
$\mathfrak{M}_c \leq 2 \Delta(G)$ (provided $G$ has at least one edge), see Theorem~\ref{2Delta}.
In addition, we show in Theorem~\ref{v+1/2}
\[\mathfrak{M}_c(G)\leq\max\left\{\frac{|V(G)|+1}{2},\mathfrak{M}(G)\right\}.\]

In Section~\ref{lb} we establish the lower bound $\mathfrak{m}_c \geq \max \{ 4, \omega + 1 \}$ 
for non-bipartite graphs where $\omega$ denotes the clique number. Extension problems for homomorphisms are studied in Section~\ref{ext}.
We show that Albertson-type extension theorems can often be deduced from results on mixing. 
As a result, we obtain a better understanding of the problem of extending precolourings of circular cliques, 
which was first raised by a conjecture of Albertson and West~\cite{AlbWest}. Specifically
we show
\begin{thm:ext}
For $k\geq2q$ and $\gcd(k,q)=1$, let $X$ be a $(k,q)$-colourable graph containing disjoint copies $X_1,X_2,\dots,X_t$ 
each isomorphic to $G_{k,q}$ and suppose that $\frac{k'}{q'}\geq \max\left\{\frac{k+1}{2}, \left\lceil\frac{k}{q}\right\rceil+1\right\}$. 
Then there exists a distance $d$ such that if $f_i:X_i\to G_{k',q'}$ is a homomorphism for $i=1,2,\dots,t$ and $d_{X}(X_i,X_j)\geq d$ 
for $i\neq j$, then the precolouring $f_1\cup\cdots\cup f_t$ extends to a $(k',q')$-colouring of $X$.
\end{thm:ext}

The final section contains examples to illustrate the sharpness of our bounds, 
and indicates several questions for future study.

%
%
\section{The colouring number}
\label{degen}

The \emph{colouring number} of a graph $G$ is $col(G):=\max\{\delta(H)+1:H\subseteq G\}$. 
The vertices of $G$ can be ordered $v_1, \dots, v_n$ so
that $v_i$ has at most $col(G)-1$ neighbours in $v_1, \dots, v_{i-1}$. 
A greedy colouring of $G$ using this ordering requires at most $col(G)$ colours.
Recall, a $k$-list assignment $L$ for a graph $G$ is
a function which assigns to each vertex of $G$ a set of at least $k$ colours.
An $L$-list colouring $f$ is a proper colouring of $G$ such that $f(v)\in L(v)$ 
for all $v\in V(G)$. The \emph{list chromatic number} $\chi_\ell(G)$ is the minimum $k$ 
such that $G$ has an $L$-list colouring for every $k$-list assignment $L$. 
Given any $col(G)$-list assignment of $G$, if we use the ordering above in a greedy colouring, 
then when we come to colour $v_i$ it has at most $col(G)-1$ 
coloured neighbours. Thus, we are guaranteed to have at least one available colour to use on $v_i$. 
This implies the following well-known bound.
\begin{equation}\label{list}
\chi(G) \leq \chi_\ell(G)\leq col(G).
\end{equation}

In mixing problems, one may require two available 
colours at $v_i$ to facilitate a local modification. This fact was used 
in~\cite{Connectedness} to prove the following tight bound.
\begin{thm}[Cereceda et al.~\cite{Connectedness}]
\label{col}
For any graph $G$, $\mathfrak{M}(G)\leq col(G)+1$.
\end{thm}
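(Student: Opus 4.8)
The plan is to show directly that $G$ is $k$-mixing for every $k\ge col(G)+1$; since $\chi(G)\le col(G)\le col(G)+1$ by~\eqref{list}, this immediately yields $\mathfrak{M}(G)\le col(G)+1$. I would argue by induction on $|V(G)|$, with the single-vertex case being immediate. For the inductive step, recall that $\delta(G)\le col(G)-1$, so $G$ contains a vertex $v$ with $\deg(v)\le col(G)-1$. Set $G':=G-v$; removing a vertex cannot increase the colouring number, so $col(G')\le col(G)$ and the induction hypothesis applies to $G'$ for every $k\ge col(G)+1$.

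Given two proper $k$-colourings $\alpha,\beta$ of $G$, I would restrict them to $G'$ to obtain colourings $\alpha',\beta'$, which by induction are joined by a path $\alpha'=\gamma_0',\dots,\gamma_m'=\beta'$ in $\mathscr{C}_k(G')$, each step recolouring a single vertex. The task is to lift this path to $\mathscr{C}_k(G)$ while carrying $v$ along. I would process the steps one at a time: suppose the step from $\gamma_{j-1}'$ to $\gamma_j'$ recolours a vertex $u$ to a colour $b$. If $u$ is not adjacent to $v$, or if $b$ differs from the current colour of $v$, this recolouring is immediately valid in $G$ and I perform it, leaving $v$ fixed. Otherwise $u\sim v$ and $b$ equals $v$'s current colour, so I first move $v$ out of the way: the neighbours of $v$ occupy at most $col(G)-1$ colours, hence at least $k-(col(G)-1)\ge 2$ colours are available at $v$, and I recolour $v$ to an available colour other than $b$ before recolouring $u$. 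Iterating yields a colouring $\gamma_m$ of $G$ whose restriction to $G'$ equals $\beta'$; a final single recolouring of $v$ to $\beta(v)$ (proper because $\beta$ is) completes a path from $\alpha$ to $\beta$.

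The crux of the argument---and the only place the hypothesis $k\ge col(G)+1$ rather than $k\ge col(G)$ is used---is this ``parking'' step, where I need at least two available colours at $v$ so that $v$ can always be shifted off any colour that would block the recolouring of a neighbour. This is precisely the phenomenon anticipated just before the statement: greedy list-colouring needs only one free colour per vertex, whereas mixing needs a second colour to carry out the local modification. I expect the main care to lie in verifying that each intermediate assignment remains a proper colouring throughout the lifting, but no step requires more than the degree bound on $v$ together with the availability of a spare colour.
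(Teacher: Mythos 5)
Your proof is correct, and it is essentially the argument the paper relies on: the paper itself only cites this theorem to Cereceda et al., but its proof of the circular generalization (Lemma~\ref{degree} together with Theorem~\ref{col(G)Bound}) uses exactly your strategy --- induct on $|V(G)|$ by deleting a vertex $v$ of degree at most $col(G)-1$ and lift a path in $\mathscr{C}_k(G-v)$ to $\mathscr{C}_k(G)$ using the guarantee of at least two available colours at $v$. The only cosmetic difference is that the paper inducts on the distance in the colour graph of $G-v$ and chooses a single ``parking'' colour for $v$ compatible with both endpoints of each step, whereas you re-park $v$ on demand while traversing the path; both hinge on the same count $k-(col(G)-1)\geq 2$.
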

In fact, Choo and MacGillivrary~\cite{Gray} proved that if $k\geq col(G)+2$, then $\mathscr{C}_k(G)$ is Hamiltonian; i.e. there is a cyclic Gray code for the 
$k$-colourings of $G$.

A circular analog of the list chromatic number was introduced by Mohar~\cite{Mohar},
and subsequently studied by many authors~\cite{Havet,Tobias,TwoQs,CircList}. The 
\emph{circular list chromatic number} $\chi_{c,\ell}$ of $G$
is the infimum over $t\in\mathbb{Q}$ such that for every $\lceil tq\rceil$-list 
assignment $L$ from $\{0,1,\dots,k-1\}$ there is a $(k,q)$-colouring $f$ of $G$
with $f(v)\in L(v)$ for all $v\in V(G)$. 
In~\cite{CircList} Zhu
obtains the following bound in terms of the colouring number
\begin{equation}\label{clist}\chi_{c,\ell}(G)\leq 2(col(G)-1).\end{equation}

Our main result in this section is the generalization of Theorem~\ref{col} to the
circular setting. As with Zhu's result above, the bound for mixing circular colourings requires a factor of $2$
compared with classical colouring.  Our result is the following:

\begin{thm}
\label{col(G)Bound}
For a graph $G$, $\mathfrak{M}_c(G)\leq 2col(G)$. 
\end{thm}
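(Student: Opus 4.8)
The plan is to mimic the proof of Theorem~\ref{col} from~\cite{Connectedness}, replacing the ``two available colours'' bookkeeping of classical colouring by the interval structure of the circular clique. Fix integers $k,q$ with $k/q\geq 2\,col(G)$ (equivalently $k\geq 2q\,col(G)$) and $k\geq 2q$; since $2\,col(G)\geq col(G)\geq\chi(G)\geq\chi_c(G)$, the graph $G$ is $(k,q)$-colourable, so it suffices to prove that $\mathscr{C}_{k,q}(G)$ is connected. The single combinatorial fact I would isolate first is that in $G_{k,q}$ a vertex coloured $d$ is non-adjacent precisely to the colours at circular distance less than $q$ from $d$, namely the $2q-1$ colours $d-(q-1),\dots,d+(q-1)$ (indices mod $k$). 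Thus a vertex $v$ whose coloured neighbours occupy positions $d_1,\dots,d_s$ has at least $k-s(2q-1)$ admissible colours available, by a union bound.

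I would prove connectivity by induction on $|V(G)|$ using a degeneracy ordering $v_1,\dots,v_n$ in which each $v_i$ has at most $col(G)-1$ neighbours among $v_1,\dots,v_{i-1}$. Let $v=v_n$, so every neighbour of $v$ lies in $G-v$ and $v$ has back-degree at most $col(G)-1$. Since $col$ is monotone under taking subgraphs, $col(G-v)\leq col(G)$, so the inductive hypothesis applies to $G-v$ with the same ratio bound. Given two $(k,q)$-colourings $f,g$ of $G$, the inductive hypothesis produces a walk in $\mathscr{C}_{k,q}(G-v)$ from $f|_{G-v}$ to $g|_{G-v}$; the task is to lift this walk to $\mathscr{C}_{k,q}(G)$, keeping $v$ validly coloured throughout and finishing by recolouring $v$ to $g(v)$ in a single final step.

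The heart of the argument is the \emph{dodge}: when the lifted walk is about to recolour a neighbour $u$ of $v$ from $a$ to $b$, and $v$'s current colour would clash with $u=b$, I would first move $v$ to a \emph{safe} colour that is admissible both before and after the step. A safe colour must avoid the forbidden intervals of the at most $col(G)-2$ neighbours of $v$ other than $u$, together with the intervals around both $a$ and $b$; by the union bound these forbid at most $\big((col(G)-2)+2\big)(2q-1)=col(G)(2q-1)$ colours. Hence the number of safe colours is at least $k-col(G)(2q-1)\geq 2q\,col(G)-col(G)(2q-1)=col(G)\geq 1$, so a safe colour always exists. This is exactly where the factor of $2$ enters: each neighbour blocks roughly $2q$ colours rather than one, and $k/q\geq 2\,col(G)$ is precisely what keeps at least one safe colour available. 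At the end of the lifted walk the neighbours of $v$ carry their $g$-colours, so $g(v)$ is admissible and $v$ can be moved to it directly, completing a walk from $f$ to $g$.

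The step I expect to be the main obstacle is verifying the dodge bookkeeping cleanly --- in particular confirming that a single safe colour (valid simultaneously in the ``before'' and ``after'' configurations of the neighbour being recoloured) always exists under the hypothesis $k\geq 2q\,col(G)$, and handling the degenerate situations (isolated vertices, the base case of the induction, and overlapping forbidden intervals when $q$ is large, where the union bound is only an inequality). Everything else is a routine generalisation of the classical argument.
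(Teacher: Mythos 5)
Your proposal is correct and follows essentially the same route as the paper: the paper isolates your ``dodge'' as a separate lemma (if $k\geq(2q-1)d(v)+2q$ and $G-v$ is $(k,q)$-mixing then so is $G$, proved by choosing a colour for $v$ compatible with the union of the before and after colours on $N(v)$, a set of size at most $d(v)+1\leq col(G)$), and then inducts on $|V(G)|$ by deleting a vertex of degree at most $col(G)-1$ exactly as you do. Your safe-colour count $k-col(G)(2q-1)\geq col(G)\geq 1$ is the same arithmetic as the paper's condition $k\geq(2q-1)|S|+1$, so the bookkeeping you flag as the main obstacle goes through as you describe.
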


The proof of Theorem~\ref{col(G)Bound} is based on the mixing process for 
homomorphisms; namely, given $f:G \to H$:
\begin{enumerate}
\item Choose a vertex $v\in V(G)$;
\item Change $f(v)$ (if possible) to yield a homomorphism $f':G \to H$.
\end{enumerate}
For classical colourings, one only requires that $f'(v)$ 
differs from the colours assigned to $N(v)$. 
For homomorphisms, $f'(v)$ must be adjacent (in $H$) with all the colours 
assigned to vertices in $N(v)$.   
That is, $f'(v)$ must belong to the following set:
\begin{equation}\label{eqn:available}
\bigcap_{u \in N_G(v)} N_H(f(u)).
\end{equation}

In the case of $(k,q)$-colourings it is convenient to 
speak in terms of intervals of colours. Given $i,j\in\{0,1,\dots,k-1\}$
let $[i,j]$ denote the set $\{i,i+1,\dots, j\}$, where colours
are reduced modulo $k$.  Let $f: G \to G_{k,q}$.
Since each colour $c$ in $G_{k,q}$ has an interval of 
non-neighbours $[c-q+1,c+q-1]$ of size $2q-1$, we obtain the expression
for the set of available colours to recolour a vertex $v$:
\begin{equation}\label{eqn:availcirc}
\bigcap_{u \in N_G(v)} N_{G_{k,q}}(f(u)) = 
V(G_{k,q}) - \bigcup_{u \in N_G(v)} [ f(u) - q + 1, f(u) + q - 1].
\end{equation}
Hence the number of available 
colours for $v$ is at least $k-(2q-1)d(v)$. 
If this number is in turn at least $2q$, we can apply an inductive argument 
along the lines of~\cite[Theorem 3]{Connectedness}.

\begin{lem}
\label{degree}
Let $G$ be a graph with $v\in V(G)$. Suppose integers $k,q$ satisfy $k\geq(2q-1)d(v)+2q$. 
If $G-v$ is $(k,q)$-mixing, then $G$ is $(k,q)$-mixing.
\end{lem}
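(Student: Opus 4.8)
The plan is to mimic the inductive strategy of~\cite[Theorem 3]{Connectedness}, lifting a recolouring sequence in $G-v$ to one in $G$ while keeping $v$ validly coloured throughout. Given two $(k,q)$-colourings $f$ and $g$ of $G$, I would first restrict them to $G-v$ and invoke the hypothesis that $G-v$ is $(k,q)$-mixing to obtain a walk
\[
f|_{G-v}=h_0,\,h_1,\,\dots,\,h_m=g|_{G-v}
\]
in $\mathscr{C}_{k,q}(G-v)$, where consecutive colourings differ on exactly one vertex. The goal is to process this walk step by step, maintaining at each stage a $(k,q)$-colouring of all of $G$ that restricts to the current $h_i$, and to show that each elementary move of the walk can be realized by one or two single-vertex recolourings in $G$.

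The two cases for the move $h_i\to h_{i+1}$, which recolours some vertex $w$, are as follows. If $w\notin N_G(v)$, then recolouring $w$ does not affect the colours seen by $v$, so I keep $v$'s colour fixed and perform the single recolouring of $w$ in $G$; this is a valid edge of $\mathscr{C}_{k,q}(G)$. If $w\in N_G(v)$ and $w$ changes colour from $c$ to $c'$, the current colour of $v$ may fail to be adjacent to $c'$ in $G_{k,q}$, so I first move $v$ to a safe colour (if it is not already safe) and only then recolour $w$. By~\eqref{eqn:availcirc}, the set $S$ of colours available to $v$ against its current neighbours has size at least $k-(2q-1)d(v)$, which is at least $2q$ by the degree hypothesis; meanwhile the colours incompatible with $c'$ form the single interval $[c'-q+1,c'+q-1]$ of size $2q-1$. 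Since $|S|\geq 2q>2q-1$, the set $S$ is not contained in this interval, so there is a colour $a'\in S$ adjacent to $c'$. I recolour $v$ to $a'$ (a valid single move, as $a'\in S$), then recolour $w$ to $c'$ (valid because $a'$ is now adjacent to $c'$ and all other neighbours of $w$ are coloured as in the proper colouring $h_{i+1}$); the resulting colouring of $G$ restricts to $h_{i+1}$.

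Carrying this out for $i=0,\dots,m-1$ transforms $f$ into a $(k,q)$-colouring of $G$ that agrees with $g$ on $G-v$. To finish, I recolour $v$ directly to $g(v)$; this is a single valid move because $g(v)$ is adjacent to the colours $g$ assigns to $N_G(v)$, and the result is exactly $g$. Since $f$ and $g$ were arbitrary, $\mathscr{C}_{k,q}(G)$ is connected.

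The step I expect to be the crux is the second case above: guaranteeing that a neighbour of $v$ can be recoloured without stranding $v$. The whole argument hinges on the comparison $|S|\geq 2q>2q-1$, i.e.\ that the pool of colours available to $v$ strictly exceeds the length of the single forbidden interval created by $w$'s new colour; this is precisely what the hypothesis $k\geq(2q-1)d(v)+2q$ buys. I would want to double-check the boundary bookkeeping here---that $a'$ is simultaneously compatible with $w$'s old colour $c$ (so that moving $v$ first is legal) and with $c'$ (so that recolouring $w$ afterwards is legal)---but both follow immediately, since $a'\in S$ already encodes adjacency to $c$ and $a'\notin[c'-q+1,c'+q-1]$ encodes adjacency to $c'$.
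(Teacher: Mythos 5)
Your proof is correct and follows essentially the same strategy as the paper's: both lift a recolouring walk of $G-v$ to $G$ by re-parking $v$ whenever a neighbour is about to change colour, using the hypothesis $k\geq(2q-1)d(v)+2q$ to guarantee a colour for $v$ compatible with $d(v)+1$ forbidden intervals (the paper phrases this as induction on the distance $d(f',g')$ and picks a single colour $c$ avoiding $f'(N(v))\cup h'(N(v))$, which is the same counting as your $|S|\geq 2q>2q-1$). No gaps.
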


\begin{proof}
Denote $G-v$ by $G'$. Let $f$ and $g$ be arbitrary 
$(k,q)$-colourings of $G$ and let $f'$ and $g'$ denote
the restrictions of $f$ and $g$ to $G'$ respectively. 
We show that there is a path from $f$ to $g$ in 
$\mathscr{C}_{k,q}(G)$. The proof is by induction
on the distance $d(f',g')$ in $\mathscr{C}_{k,q}(G')$. 
In the base case $d(f',g')=0$, the colourings
$f$ and $g$ differ on at most one vertex, 
namely $v$, and so either $f=g$ or $f\sim g$.

Now, suppose $d(f',g')=d\geq1$. Let $h'$
be a $(k,q)$-colouring of $G'$ adjacent to 
$f'$ such that $d(h',g')=d-1$. Using $h'$ we 
construct a colouring $h$ of $G$ admitting
paths to both $f$ and $g$. 

Define $S:=f'(N_G(v))\cup h'(N_G(v))$. 
Since $|N(v)|=d(v)$ and $f'$ and $h'$ differ on only one 
vertex, we have $|S|\leq d(v)+1$. 
By hypotheses $k\geq(2q-1)d(v)+2q$, thus, $k\geq (2q-1)|S|+1$.
Consequently, there is a colour $c$ adjacent (in $G_{k,q}$) to every 
colour in $S$. Define, a proper colouring, $h:G\to G_{k,q}$ by:
\begin{displaymath}h(u)=\left\{\begin{array}{ll}  
                 c  & \mbox{ if }  u=v,  \\  
                 h'(u) &  \mbox{ otherwise. }    
               \end{array}   \right.      \end{displaymath}
Clearly the restriction of $h$ to $G'$ is $h'$. 
Since $d(h',g')=d-1$, there exists a path from
$h$ to $g$ in $\mathscr{C}_{k,q}(G)$ by the
inductive hypothesis.

Finally, we show that there is a path from $f$
to $h$ in $\mathscr{C}_{k,q}(G)$. Let $w$ be the
unique vertex such that $f'(w)\neq h'(w)$. Define
a colouring $j:G\to G_{k,q}$ in the following way:
\begin{displaymath}j(u)=\left\{\begin{array}{ll}  
                 c  & \mbox{ if }  u=v,  \\  
                 f'(u) &  \mbox{ otherwise. }    
               \end{array}   \right.      \end{displaymath}
Again, the fact that $j$ is a $(k,q)$-colouring follows
from our definition of $c$. The colourings $f$ and $j$ 
differ on at most one vertex, namely $v$. Also, $j$
only differs from $h$ on $w$. Therefore, there is
a path from $f$ to $h$ (through $j$). The result 
follows.
\end{proof}

\begin{proof}[Proof of Theorem~\ref{col(G)Bound}]
Suppose $k/q \geq 2 col(G)$. 
We show $G$ is $(k,q)$-mixing by induction on $|V(G)|$, where the base case
$|V(G)|=1$ is considered trivial. Suppose that $|V(G)|\geq2$.
Clearly $G$ has a vertex $v$ such that $d(v)\leq col(G)-1$.
The graph $G'=G-v$ satisfies $k/q \geq 2col(G')$ since $col(G) \geq col(G')$.
Thus $G'$ is $(k,q)$-mixing by the inductive hypothesis.
Also $k \geq 2q col(G) \geq 2q (d(v)+1) = 2qd(v) + 2q \geq (2q-1)d(v) + 2q$.
This implies that $G$ is $(k,q)$-mixing by Lemma~\ref{degree}.
\end{proof}

%
%
\section{Homotopy}
\label{MixHom}

\begin{note}
In this section, we examine the well studied \emph{homomorphism graph} which is related
to the colour graph defined above.  By definition, the homomorphism graph has a loop
on each vertex. Thus, in this section we allow loops unless otherwise stated.
\end{note}

Given graphs $G$ and $H$, we recall the \emph{categorical product} $G \times H$ is
the graph defined by:
\begin{itemize}
  \item $V(G \times H) = V(G) \times V(H)$;
  \item $(g_1,h_1)(g_2,h_2) \in E(G \times H)$ if and only if 
  $g_1 g_2 \in E(G)$ and $h_1 h_2 \in E(H)$.
\end{itemize}

Following~\cite{Dochtermann},
we recall, for topological spaces $X$ and $Y$,
maps $f: X \to Y$ and $g: X \to Y$ are homotopic if there is an appropriately defined
map $\varphi: [0, 1] \to HOM(X,Y)$ such that $\varphi(0) = f$ and $\varphi(1) = g$. 
Recently an analogous theory of homotopy has been developed for graph 
homomorphisms~\cite{Dochtermann, Dochtermann2}. 
Given graphs $G$ and $H$, define the 
\emph{$H$-homomorphism graph} of $G$, denoted $\mathscr{H}_H(G)$, to be 
the graph on vertex set $HOM(G,H)$ where $f\sim g$ if $f(u)g(v)\in E(H)$ 
for all $uv\in E(G)$. By definition, $\mathscr{H}_H(G)$ is reflexive.
We say that $f:G\to H$ and $g:G\to H$ are \emph{$\times$-homotopic} if there is
a path from $f$ to $g$ in $\mathscr{H}_H(G)$.  As noted in~\cite{Dochtermann} 
such a path corresponds to a homomorphism $\varphi: I_n \to \mathscr{H}_H(G)$ 
where $I_n$ is the reflexive path on $n$ vertices.  Equivalently, such a homotopy is a 
homomorphism $\tilde{\varphi} : G \times I_n \to H$ where $\tilde{\varphi}(-,0) = f$ and 
$\tilde{\varphi}(-,n-1) = g$.  This connection between $\mathscr{H}_H(G)$ and 
the product $G \times I_n$ indicates why we use the notation $\times$-homotopy.
%
%

A connection between the $\times$-homotopy classes of graph homomorphisms and the topology 
of Hom complexes is established by Dochtermann in~\cite{Dochtermann}. Lov{\'a}sz first introduced Hom complexes
in his proof of Kneser's Conjecture~\cite{LovaszK}. 
The $H$-homomorphism graph is perhaps better known as the reflexive
subgraph of the \emph{exponential graph} $H^G$. 
Exponential graphs were introduced by Lov{\'a}sz in~\cite{Lovasz} and have 
been used to study and solve many interesting homomorphism and colouring 
problems, see for example~\cite{BrewsterGraves,El-Z}. 
For loop-free graphs, the connected components of $\mathscr{C}_H(G)$ correspond
to \emph{mixing classes}, whereas the connected components of $\mathscr{H}_H(G)$ correspond
to \emph{homotopy classes}.  The following proposition shows that, for loop-free graphs, these classes coincide.

\begin{prop}
\label{colourHom}
Let $G$ and $H$ be graphs such that $G$ is loop-free and $G \to H$.
Then $\mathscr{C}_H(G)$ is a spanning subgraph of $\mathscr{H}_H(G)$. 
Moreover, there is a walk from $f$ to $g$ in $\mathscr{H}_H(G)$ 
if and only if there is a walk from $f$ to $g$ in $\mathscr{C}_H(G)$.
\end{prop}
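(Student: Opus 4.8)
The plan is to prove both assertions by exploiting the fact that $\mathscr{C}_H(G)$ and $\mathscr{H}_H(G)$ share the common vertex set $HOM(G,H)$: the spanning-subgraph claim then reduces to an edge containment, and the walk equivalence reduces to a local recolouring argument applied one $\mathscr{H}_H(G)$-edge at a time.

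First I would show that $\mathscr{C}_H(G)$ is a spanning subgraph of $\mathscr{H}_H(G)$. As the two graphs have the same vertex set, it suffices to check that every edge of $\mathscr{C}_H(G)$ is an edge of $\mathscr{H}_H(G)$. Let $f\sim g$ in $\mathscr{C}_H(G)$, so that $f$ and $g$ agree everywhere except at a single vertex $w$. To verify the adjacency condition of $\mathscr{H}_H(G)$ I would take an arbitrary edge $uv\in E(G)$ and check $f(u)g(v)\in E(H)$, splitting into cases according to whether $w\in\{u,v\}$. Since $G$ is loop-free we have $u\neq v$, so at most one of $u,v$ equals $w$, and in each case $f(u)g(v)$ is witnessed as an edge of $H$ either by $f$ or by $g$ being a homomorphism (for instance, if $w=u$ then $g(v)=f(v)$ and $f(u)g(v)=f(u)f(v)\in E(H)$). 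This yields the edge containment.

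Next, the equivalence of walks. One direction is immediate: by the spanning-subgraph property just established, any walk in $\mathscr{C}_H(G)$ is already a walk in $\mathscr{H}_H(G)$. For the converse it suffices, by concatenation, to produce a walk in $\mathscr{C}_H(G)$ between any two $\mathscr{H}_H(G)$-adjacent homomorphisms $f$ and $g$. Here I would fix an enumeration $w_1,\dots,w_n$ of $V(G)$ and define intermediate maps $h_0,h_1,\dots,h_n$ by $h_i(w_j)=g(w_j)$ for $j\le i$ and $h_i(w_j)=f(w_j)$ for $j>i$, so that $h_0=f$, $h_n=g$, and consecutive maps $h_{i-1},h_i$ differ on at most the single vertex $w_i$. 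Provided each $h_i$ is a homomorphism, consecutive maps are either equal or adjacent in $\mathscr{C}_H(G)$, giving the desired walk.

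The main obstacle, and the heart of the argument, is verifying that every $h_i$ is a homomorphism. For an edge $uv\in E(G)$ the only nontrivial case is the \emph{mixed} one in which exactly one endpoint, say $u$, has already been recoloured to its $g$-value while $v$ still carries its $f$-value; then $h_i(u)h_i(v)=g(u)f(v)$, and one must show this lies in $E(H)$. This is precisely where the $\mathscr{H}_H(G)$-adjacency of $f$ and $g$ is used: the defining condition $f(a)g(b)\in E(H)$ holds for every edge $ab\in E(G)$, and since edges are unordered, applying it to the edge $vu$ gives $f(v)g(u)\in E(H)$, whence $g(u)f(v)\in E(H)$ by symmetry of $E(H)$. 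The remaining cases (both endpoints recoloured, or neither) follow directly from $g$, respectively $f$, being a homomorphism. Once this verification is complete, both assertions of the proposition are established.
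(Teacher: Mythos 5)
Your proof is correct and follows essentially the same route as the paper's: edge containment by a case split at the single differing vertex (using loop-freeness), and the converse by recolouring one vertex at a time along an $\mathscr{H}_H(G)$-edge, with the homomorphism property of each intermediate map verified exactly as in the paper via the four products $f(u)f(v)$, $f(u)g(v)$, $g(u)f(v)$, $g(u)g(v)$. The only cosmetic difference is that you enumerate all of $V(G)$ rather than just the vertices where $f$ and $g$ differ, which changes nothing.
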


\begin{proof}
Clearly the $H$-colour graph and the $H$-homomorphism graph of $G$ have the 
same vertex set, namely $HOM(G,H)$. To see that $\mathscr{C}_H(G)$ is a 
subgraph of $\mathscr{H}_H(G)$ let $f$ and $g$ be adjacent in 
$\mathscr{C}_H(G)$. Suppose $uv\in E(G)$. Since $G$ is loop-free and $f$ and $g$ differ on one 
vertex, we may assume $f(u)=g(u)$. It follows that $f(u)g(v)\in E(G)$ since 
$f$ and $g$ are homomorphisms. Thus, $fg\in E(\mathscr{H}_H(G))$. Hence,
$\mathscr{C}_H(G) \subseteq \mathscr{H}_H(G)$, and every walk in
$\mathscr{C}_H(G)$ is a walk in $\mathscr{H}_H(G)$.

Thus suppose there is a walk in $\mathscr{H}_H(G)$ from $f$ to $g$.  In fact
we consider the case where $fg$ is an edge, from which the general situation
follows. 
We construct an $(f,g)$-walk in $\mathscr{C}_H(G)$. If $f=g$ there is nothing 
to prove, thus, suppose that $f$ and $g$ differ on $t\geq1$ vertices, say 
$v_1,v_2,\dots ,v_t$. For $0\leq j\leq t$ we define the mapping 
$h_j:V(G)\to V(H)$ in the following way:

\begin{displaymath}
h_j(v)=\left\{\begin{array}{ll}  
                 g(v)  & \mbox{ if }  v=v_i \mbox{ for } i\leq j,  \\  
                 f(v) &  \mbox{ otherwise. }    
               \end{array}   \right.
\end{displaymath}

Clearly we have that $h_0=f$, $h_t=g$. We also observe that for 
$0\leq j\leq t-1$ the map $h_j$ and $h_{j+1}$ differ on exactly one vertex. 
To show that $h_0h_1\dots h_t$ is our desired $(f,g)$-walk, we must show each 
$h_j$ is a homomorphism. This is easily done. Observe for all $v \in V(G)$, $h_j(v)$ is equal to 
$f(v)$ or $g(v)$. For $uv\in E(G)$ we have 
$f(u)f(v), f(u)g(v), g(u)g(v)\in E(H)$ since $f$ and $g$ are homomorphisms 
and $fg\in E(\mathscr{H}_H(G))$. The result follows. 
\end{proof}

Given the equivalence between mixing classes and homotopy classes, 
when studying mixing problems for loop-free graphs we may use either
$\mathscr{C}_H(G)$ or $\mathscr{H}_H(G)$,  whichever 
is most convenient for the problem at hand.
Observe the above proposition does not hold for graphs with loops allowed. For example, if $G$ is the
graph consisting of two isolated reflexive vertices $u$ and $v$, then there are precisely
four functions mapping $V(G)\to V(G)$, all of which are homomorphisms $G\to G$. 
No two homomorphisms $G\to G$ are $\times$-homotopic, yet $\mathscr{C}_G(G)$ is connected, see 
Figure~\ref{twoisolated}. 

\begin{figure}
\begin{center}
\begin{tikzpicture}
  [std/.style={circle, draw=black!100,fill=black!100,thick, inner sep=1pt, minimum size=1.5mm},
   filled/.style={circle, draw=black!100,fill=black!20,thick, inner sep=1pt, minimum size=3mm}]

   \node[std]         (v0) 		at (0,0)         	{};
   \node[std] 		(v1)		at (0,1.5)		{}; 

   \node[left] at (v0.west) {$u$};
   \node[left] at (v1.west) {$v$};

   \draw (v0) to [out=60,in=0] (0,0.5) to [out=180,in=120] (v0);  
   \draw (v1) to [out=60,in=0] (0,2) to [out=180,in=120] (v1);  
   
   \node at (-0.7,1) {$G$};

%
%
   \node[filled]         (c00) 		at (3,0)         	{$uu$};
   \node[filled] 		(c01)		at (3,1.5)	{$uv$}; 
   \node[filled]         (c10) 		at (4.5,0)      	{$vu$};
   \node[filled] 		(c11)		at (4.5,1.5)	{$vv$}; 

   \draw (c00) -- (c01) -- (c11) -- (c10) -- (c00);  
   
   \node at (2.2,0.75) {$\mathscr{C}_G(G)$};

   \node[filled]         (h00) 		at (8,0)         	{$uu$};
   \node[filled] 		(h01)		at (8,1.5)	{$uv$}; 
   \node[filled]         (h10) 		at (9.5,0)      	{$vu$};
   \node[filled] 		(h11)		at (9.5,1.5)	{$vv$}; 

   \draw (h00) to [out=60,in=0] (8,0.7) to [out=180,in=120] (h00);  
   \draw (h01) to [out=60,in=0] (8,2.2) to [out=180,in=120] (h01);  
   \draw (h10) to [out=60,in=0] (9.5,0.7) to [out=180,in=120] (h10);  
   \draw (h11) to [out=60,in=0] (9.5,2.2) to [out=180,in=120] (h11);  

   \node at (7.2,0.75) {$\mathscr{H}_G(G)$};

\end{tikzpicture}
\end{center}
\caption{A reflexive graph $G$ such that $\mathscr{C}_G(G)\nsubseteq\mathscr{H}_G(G)$.  The mapping $f : G \to G$ is encoded
as a two digit string $f(u)f(v)$.}
\label{twoisolated}
\end{figure}
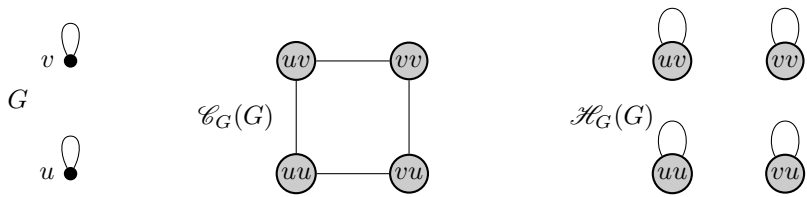

\begin{defn}
\label{specialHoms}
Let $X$ and $Y$ be graphs and let $\varphi:X\to Y$ be a homomorphism. Then $\varphi$ is 
\begin{enumerate}[(a)]
\item an \emph{endomorphism} if $Y=X$.
\item a \emph{retraction} if there is a homomorphism $\psi:Y\to X$, called a \emph{section}
(corresponding to $\varphi$), such that $\varphi\psi$ is the identity on $Y$. We say that $X$
\emph{retracts} to $Y$, or that $Y$ is a \emph{retract} of $X$.  
\item a \emph{fold} if $Y=X-v$ for some $v\in V(X)$, $\varphi$ is the identity on $Y$, and $N(v)\subseteq N(\varphi(v))$. We say that $X$ \emph{folds} to $Y$.
\item a \emph{dismantling retraction} if $\varphi$ is a (possibly empty) composition of folds. We say that $X$ \emph{dismantles} to $Y$.
\end{enumerate}
\end{defn}

%
\begin{defn}
\label{rigidcorestiff}
Let $X$ be a graph. Then $X$ is said to be 
\begin{enumerate}[(a)]
\item \emph{rigid} if the only endomorphism of $X$ is the identity.
\item a \emph{core} if $X$ does not retract to a proper subgraph.
\item \emph{stiff} if $X$ does not fold to a proper subgraph.
\item \emph{dismantlable} if $X$ has a dismantling retraction $\varphi:X\to Y$, where $Y$ is a rigid graph.
\end{enumerate}
\end{defn}

\begin{rem}
It can be seen that rigid $\Rightarrow$ core $\Rightarrow$ stiff, but the converse is false for both implications.
\end{rem}

We note that dismantlability is often only defined for 
reflexive graphs, and so our definition is not standard.   
However, our definition coincides with the
standard definition when applied to reflexive graphs.
That is, the only rigid graph with loops is the graph $\mathbf{1}$ consisting of a 
single reflexive vertex, and so a graph with loops is dismantlable 
if it has a dismantling retraction to $\mathbf{1}$. 
Dismantlability is often associated with the `cops and robbers' pursuit game on graphs. 
It was independently proven in~\cite{Nowakowski} and~\cite{Quilliot} that a reflexive graph
is cop-win (meaning that the cop has a winning strategy)
if and only if it is dismantlable. 

\begin{thm}\label{uniqueCoreStiff}
Let $X$ be a graph.
\begin{enumerate}[(a)]
\item \emph{(Hell and Ne{\v{s}}et{\v{r}}il~\cite{Core,HellNesetril})} There is a unique (up to isomorphism) subgraph $Y$ of $X$ such that $Y$ is a core and $X$ retracts to $Y$. The graph $Y$ is called the \emph{core} of $X$, denoted $Y=core(X)$.
\item \emph{(B{\'e}langer et al.~\cite{Belanger}; Fieux and Lacaze~\cite{Fieux}; Hell and Ne{\v{s}}et{\v{r}}il~\cite{HellNesetril})} There is a unique (up to isomorphism) subgraph $Y$ of $X$ such that $Y$ is stiff and $X$ has a dismantling retraction $\varphi:X\to Y$.\label{uniqueCore}
\end{enumerate}
\end{thm}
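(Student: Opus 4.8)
The plan is to treat both parts as ``reduce to an irreducible object, then prove uniqueness up to isomorphism'' arguments; in each case existence is immediate from finiteness, and the real content is the uniqueness. For existence in (a) I would repeatedly retract $X$ onto proper subgraphs: since $|V|$ strictly decreases at each step (and is bounded below by $\chi(X)$), the process halts at a subgraph $Y$ admitting no retraction to a proper subgraph, i.e.\ a core. The same finiteness argument handles existence in (b): repeatedly fold, where each fold deletes a dominated vertex $v$ (one with $N(v)\subseteq N(w)$ for some $w\neq v$), until no dominated vertex remains; the resulting subgraph $Y$ is stiff, and the composition of these folds is by definition a dismantling retraction $X\to Y$.

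For uniqueness in (a), the key lemma is that \emph{every endomorphism of a core is an automorphism}. I would prove this by taking $\varphi:Y\to Y$ and using finiteness of $HOM(Y,Y)$ to find $m\geq1$ with $\varphi^m$ idempotent; an idempotent endomorphism is a retraction onto its image, so since $Y$ is a core, $\varphi^m$ is surjective, hence (finiteness) bijective, which forces $\varphi$ itself to be a bijection, hence an automorphism. Given two cores $Y_1,Y_2$ that are retracts of $X$, I compose the retraction $X\to Y_2$ with the section $Y_1\to X$ to obtain $\alpha:Y_1\to Y_2$, and symmetrically $\beta:Y_2\to Y_1$. Then $\beta\alpha$ is an endomorphism of the core $Y_1$, hence an automorphism, and likewise $\alpha\beta$ is an automorphism of $Y_2$. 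This forces $\alpha$ to be a vertex-bijection mapping $E(Y_1)$ bijectively onto $E(Y_2)$ (by comparing edge counts, since $\alpha,\beta$ both preserve edges), hence an isomorphism, so $Y_1\cong Y_2$.

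For uniqueness in (b), I would argue by confluence of the folding relation. Regard ``$X$ folds to $X'$ in one step'' as a rewriting relation on isomorphism classes of graphs. Termination is clear since the vertex count strictly decreases, so by Newman's lemma it suffices to establish local confluence up to isomorphism: if $X$ folds to $X_1$ by deleting $u$ (dominated by $u'$) and to $X_2$ by deleting $v\neq u$ (dominated by $v'$), then $X_1$ and $X_2$ have a common reduct up to isomorphism. I would carry this out by a short case analysis on how $\{u',v'\}$ meets $\{u,v\}$: when the dominating vertices avoid the deleted ones, $v$ remains dominated by $v'$ in $X_1$ and $u$ by $u'$ in $X_2$, and both fold to the literally identical graph $X-u-v$; the interesting case is $v'=u$ (or symmetrically $u'=v$), where domination composes, $N(v)\subseteq N(u)\subseteq N(u')$, so the survivor is still dominated and the diamond again closes on $X-u-v$; and the degenerate twin case $N(u)=N(v)$, where $X-u$ and $X-v$ are merely isomorphic rather than equal. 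Since the stiff graphs are exactly the normal forms (no dominated vertex), uniqueness of normal forms up to isomorphism yields the claim, and any dismantling retraction $X\to Y$ with $Y$ stiff realises such a normal form.

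The main obstacle I anticipate is the local-confluence case analysis in (b): one must check that deleting a single dominated vertex never destroys the domination relations needed to complete the diamond, and track precisely when the two reducts coincide versus when they are only isomorphic (the twin case), since it is exactly this distinction that forces the statement to be ``unique up to isomorphism'' rather than literal equality. By contrast, part (a) is comparatively routine once the idempotent-power lemma is in hand.
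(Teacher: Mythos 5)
The paper does not prove Theorem~\ref{uniqueCoreStiff}: both parts are stated as background results and attributed to the cited references (Hell and Ne{\v{s}}et{\v{r}}il for cores; B{\'e}langer et al., Fieux--Lacaze, and Hell--Ne{\v{s}}et{\v{r}}il for the stiff retract), so there is no in-paper argument to compare yours against. Judged on its own, your proof is correct and follows the standard lines of those references. In (a), the idempotent-power lemma is exactly the classical argument: some $\varphi^m$ is idempotent, an idempotent is a retraction onto its image subgraph, and the core property forces that image to be all of $Y$; the edge-counting step ($\alpha$ and $\beta$ inject edge sets into each other, so $|E(Y_1)|=|E(Y_2)|$ and $\alpha$ is an isomorphism) correctly closes the gap between ``bijective homomorphism'' and ``isomorphism.'' In (b), packaging the folds as a terminating rewriting system and invoking Newman's lemma is a clean and legitimate way to organize the usual diamond argument; your case analysis is complete once you note (as you implicitly do) that $v'=u$ together with $u'=v$ forces $N(u)=N(v)$, which is precisely the twin case where the two one-step reducts are only isomorphic (via the transposition of $u$ and $v$, which is an automorphism of $X$) rather than equal --- this is indeed the sole source of the ``up to isomorphism'' in the statement. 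Two minor points worth making explicit if you write this up: the relation must be taken on isomorphism classes for Newman's lemma to give the stated conclusion (folding is clearly compatible with isomorphism, so this is harmless), and since Section~\ref{MixHom} of the paper allows loops, one should check that the twin transposition remains an automorphism in the reflexive setting (it does: $N(u)=N(v)$ forces $u$ and $v$ to have loops simultaneously).
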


\begin{figure}
\begin{center}
\begin{tikzpicture}[node distance=2cm, auto]
  \node (G) {$G$};
  \node (H) [right of=G] {$H$};
  \node (F) [below of=G] {$F$};
  \node (K) [below of=H] {$K$};
  
  \draw[->] (G.250) to node [swap] {$g$} (F.110);
  \draw[->] (F.70) to node [swap]{$f$} (G.290);
  \draw[->] (H.250) to node [swap] {$h$} (K.110);
  \draw[->] (K.70) to node [swap]{$k$} (H.290);
  
  \draw[->] (G) to node {$x$} (H);
  \draw[->] (F) to node [swap] {$y$} (K);

\end{tikzpicture}
\end{center}
\caption{Diagram used to define $\varphi_h^f$ and $\varphi_k^g$, where $x \in HOM(G,H)$ and $y \in HOM(F,K)$.}
\label{fig:diagram}
\end{figure}
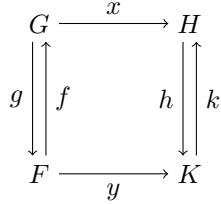


We will be investigating the relationship between $\mathscr{H}_H(G)$ and $\mathscr{H}_K(F)$ 
when there are homomorphisms $f,g,h,$ and $k$ as in
Figure~\ref{fig:diagram}.
Of special importance is the case where $g$ and $h$ are either both retractions or dismantling retractions. 
Define
\begin{itemize}
\item $\varphi^f_h:HOM(G,H)\to HOM(F,K)$  by $\varphi^f_h(x)=hxf$, and
\item $\varphi^g_k:HOM(F,K)\to HOM(G,H)$  by $\varphi^g_k(y)=kyg$.
\end{itemize}

In this section, we include proofs of several standard lemmas for completeness. In many cases, different forms of essentially the same results can 
be found in work such as~\cite{Dochtermann,Fieux,HellNesetril}. The next lemma is similar to~\cite[Chapter 2 Exercise \# 8 (b)]{HellNesetril}. Following~\cite{BondyMurty} we use $c(X)$ to denote the number of components of a graph $X$.

\begin{lem}
\label{homs}
The mappings
\[\varphi^f_h:\mathscr{H}_H(G)\to\mathscr{H}_K(F)\mbox{ and }\]
\[\varphi^g_k:\mathscr{H}_K(F)\to\mathscr{H}_H(G)\]
are graph homomorphisms.
\end{lem}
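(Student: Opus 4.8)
The plan is to verify the homomorphism condition directly from the definition of the homomorphism graph. Recall that $x_1 x_2 \in E(\mathscr{H}_H(G))$ precisely when $x_1(u)x_2(v) \in E(H)$ for every edge $uv \in E(G)$, and analogously for $\mathscr{H}_K(F)$. Since both graphs are reflexive, I need only confirm that $\varphi^f_h$ (and, symmetrically, $\varphi^g_k$) carries each edge of the source to an edge of the target; no separate treatment of coincident images is required, as reflexivity supplies the needed loops.

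For $\varphi^f_h$, I would start with an arbitrary edge $x_1 x_2$ of $\mathscr{H}_H(G)$ together with a fixed edge $ab \in E(F)$, and aim to show $(hx_1f)(a)(hx_2f)(b) \in E(K)$. The reasoning proceeds in three applications of the homomorphism property, reading the relevant arrows off Figure~\ref{fig:diagram}. First, since $f\colon F \to G$ is a homomorphism, $f(a)f(b) \in E(G)$. Second, since $x_1 x_2$ is an edge of $\mathscr{H}_H(G)$, applying its defining condition to the edge $f(a)f(b)$ yields $x_1(f(a))\,x_2(f(b)) \in E(H)$. Third, since $h\colon H \to K$ is a homomorphism, it maps this edge to $h(x_1(f(a)))\,h(x_2(f(b))) = (hx_1f)(a)\,(hx_2f)(b) \in E(K)$. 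As $ab$ was arbitrary, this establishes $\varphi^f_h(x_1)\varphi^f_h(x_2) \in E(\mathscr{H}_K(F))$, which is exactly what is needed.

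The argument for $\varphi^g_k$ is entirely symmetric, using the maps $g\colon G \to F$ and $k\colon K \to H$ in place of $f$ and $h$. The only point demanding care -- and the one I would flag as the main potential pitfall -- is the asymmetry of the adjacency rule $x_1(u)x_2(v)$: one must keep $x_1$ attached to the first endpoint and $x_2$ to the second throughout the composition, so that the precomposition by $f$ (respectively $g$) lands on the correct coordinate before the defining edge condition of $\mathscr{H}_H(G)$ (respectively $\mathscr{H}_K(F)$) is invoked. Once the bookkeeping of composition order and of which coordinate each map acts on is respected, the verification is a routine triple application of the homomorphism property, and no further machinery is required.
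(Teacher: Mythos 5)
Your proof is correct and follows essentially the same route as the paper: a direct verification that an edge of the source homomorphism graph is sent to an edge of the target, by applying the homomorphism property of $f$ (resp.\ $g$), then the adjacency condition in $\mathscr{H}_H(G)$ (resp.\ $\mathscr{H}_K(F)$), then the homomorphism property of $h$ (resp.\ $k$), with the second map obtained by symmetry. The paper simply verifies $\varphi^g_k$ first and is terser, but the argument is the same.
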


\begin{proof}
Let $y$ and $y'$ be a pair of adjacent vertices in $\mathscr{H}_K(F)$ 
and let $uv$ be an edge of $G$. Then, 
$\varphi^g_k(y)(u)\varphi^g_k(y')(v)=k y g(u) ky' g(v)$ is an edge of 
$H$ because $y \sim y'$ and $g,k$ are homomorphisms. Therefore
$\varphi^g_k(y)\sim\varphi^g_k(y')$ and so $\varphi^g_k$ is a homomorphism.
By symmetry, $\varphi^f_h$ is a homomorphism as well.
\end{proof}

\begin{lem}
\label{retract}
Suppose that $g$ and $h$ are retractions with corresponding sections $f$ 
and $k$. Then $\varphi^f_h:\mathscr{H}_H(G)\to\mathscr{H}_K(F)$ is a
retraction, with $\varphi^g_k$ as the corresponding section.
\end{lem}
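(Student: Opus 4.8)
The plan is to verify the two defining properties of a retraction directly from the composition formulas. Recall from Definition~\ref{specialHoms}(b) that $\varphi^f_h$ is a retraction with section $\varphi^g_k$ precisely when both maps are homomorphisms and $\varphi^f_h\circ\varphi^g_k$ is the identity on $\mathscr{H}_K(F)$. The first condition is handed to us for free: Lemma~\ref{homs} already establishes that $\varphi^f_h:\mathscr{H}_H(G)\to\mathscr{H}_K(F)$ and $\varphi^g_k:\mathscr{H}_K(F)\to\mathscr{H}_H(G)$ are graph homomorphisms, so nothing further is required there. It therefore suffices to treat these maps purely as functions on vertex sets and establish the composition identity.

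It remains to check that composition identity. First I would record the hypotheses explicitly: since $g$ and $h$ are retractions with corresponding sections $f$ and $k$, we have $gf=\mathrm{id}_F$ and $hk=\mathrm{id}_K$. Then for an arbitrary $y\in HOM(F,K)$ I would compute $\varphi^f_h(\varphi^g_k(y))=\varphi^f_h(kyg)=h(kyg)f=(hk)\,y\,(gf)=\mathrm{id}_K\circ y\circ\mathrm{id}_F=y$, where the regrouping in the middle is simply associativity of function composition. Hence $\varphi^f_h\circ\varphi^g_k$ fixes every vertex of $\mathscr{H}_K(F)$, which is exactly the identity map on $HOM(F,K)$. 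By the definition of a retraction, this shows $\varphi^f_h$ is a retraction with $\varphi^g_k$ as its section.

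There is essentially no obstacle in this argument; it is a formal manipulation that piggybacks on Lemma~\ref{homs}. The only point warranting care is the bookkeeping of arrow directions and composition order: one must confirm that $kyg$ genuinely lands in $HOM(G,H)$ (so that $\varphi^f_h$ may legitimately be applied to it) and that the cancellations $hk=\mathrm{id}_K$ and $gf=\mathrm{id}_F$ are applied on the correct sides of $y$. Once these are tracked correctly, the conclusion follows immediately, and the lemma serves as the key ingredient for transferring retraction structure between $\mathscr{H}_H(G)$ and $\mathscr{H}_K(F)$ in the subsequent development.
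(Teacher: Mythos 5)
Your proof is correct and follows essentially the same route as the paper: both reduce the claim to the single computation $\varphi^f_h\varphi^g_k(y)=h(kyg)f=(hk)y(gf)=y$, using $gf=\mathrm{id}_F$ and $hk=\mathrm{id}_K$, with Lemma~\ref{homs} supplying the fact that both maps are homomorphisms. The extra care you take with arrow directions and the domain of $kyg$ is sound bookkeeping but adds nothing beyond the paper's one-line argument.
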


\begin{proof}
Given $y:F\to K$, we have that $\varphi^f_h\varphi^g_k (y) = hk y gf = y$ since 
$gf$ and $hk$ are the identities on $F$ and $K$ respectively.  
\end{proof}

\begin{cor}
\label{retracts} 
If $G$ retracts to $F$ and $H$ retracts to $K$, then 
$c(\mathscr{H}_H(G)) \geq c(\mathscr{H}_K(F))$.
\end{cor}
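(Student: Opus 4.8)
The plan is to reduce everything to the two lemmas just proved, together with the elementary fact that a surjective graph homomorphism induces a surjection on the sets of connected components. First I would make the retraction data explicit so as to land exactly in the situation of Figure~\ref{fig:diagram}. Since $G$ retracts to $F$, there is a retraction $g:G\to F$ with a corresponding section $f:F\to G$, so that $gf$ is the identity on $F$; likewise, since $H$ retracts to $K$, there is a retraction $h:H\to K$ with section $k:K\to H$, so that $hk$ is the identity on $K$. This is precisely the configuration required to apply Lemma~\ref{retract}, with $g$ and $h$ both retractions.

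Next I would invoke Lemma~\ref{retract} directly: it tells us that the induced map $\varphi^f_h:\mathscr{H}_H(G)\to\mathscr{H}_K(F)$ is a retraction of graphs, with $\varphi^g_k$ serving as its section. In particular $\varphi^f_h$ is a graph homomorphism (as already guaranteed by Lemma~\ref{homs}), and it is surjective on vertices: since $\varphi^f_h\varphi^g_k$ is the identity on $\mathscr{H}_K(F)$, every $y\in HOM(F,K)$ equals $\varphi^f_h\bigl(\varphi^g_k(y)\bigr)$ and hence lies in the image of $\varphi^f_h$.

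Finally I would pass from this surjective homomorphism to the component count. A graph homomorphism carries connected subgraphs to connected subgraphs, so $\varphi^f_h$ maps each component of $\mathscr{H}_H(G)$ into a single component of $\mathscr{H}_K(F)$; this defines a map from the components of $\mathscr{H}_H(G)$ to those of $\mathscr{H}_K(F)$. Because $\varphi^f_h$ is surjective on vertices, every component of $\mathscr{H}_K(F)$ contains a vertex in the image and is therefore the target of at least one component of $\mathscr{H}_H(G)$, so the induced map on components is onto. Hence $c(\mathscr{H}_H(G))\geq c(\mathscr{H}_K(F))$, as desired.

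The step carrying the only real content is this last one—translating surjectivity of $\varphi^f_h$ into a surjection on components—but it is entirely elementary, and I anticipate no genuine obstacle. The only care needed is to state clearly why the image of a connected component is connected and why vertex-surjectivity forces the induced component map to be surjective; everything preceding it is bookkeeping with Lemmas~\ref{homs} and~\ref{retract} already in hand.
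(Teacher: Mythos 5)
Your argument is correct and follows exactly the paper's route: apply Lemma~\ref{retract} to get a retraction $\varphi^f_h:\mathscr{H}_H(G)\to\mathscr{H}_K(F)$, note that retractions are vertex-surjective, and conclude that the induced map on components is onto. The paper's proof is just a terser version of the same three steps.
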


\begin{proof}
By Lemma~\ref{retract}, $\mathscr{H}_H(G)$ retracts to
$\mathscr{H}_K(F)$. Retractions are surjective.  Homomorphisms map
components to components. The result follows.
\end{proof}

The next result was proven by Fieux and Lacaze in~\cite{Fieux}. We include a 
proof here for completeness.

\begin{lem}[Fieux and Lacaze~\cite{Fieux}]
\label{fold}
Suppose that $g$ and $h$ are dismantling retractions with corresponding sections $f$ 
and $k$. Then $\varphi^f_h:\mathscr{H}_H(G)\to\mathscr{H}_K(F)$ is a dismantling
retraction, with $\varphi^g_k$ as the corresponding section.
\end{lem}

\begin{proof}
We suppose that $g$ and $h$ are folds or isomorphisms, from
which the general case follows. By Lemma~\ref{retract} we know that $\varphi_h^f:\mathscr{H}_H(G)\to \mathscr{H}_K(F)$
is a retraction with corresponding section $\varphi_k^g$. To show
that $\varphi_h^f$ is a dismantling retraction, we proceed in two steps:
first we show that $\mathscr{H}_H(G)$ dismantles to $\mathscr{H}_H(F)$, and
secondly we show $\mathscr{H}_H(F)$ dismantles to $\mathscr{H}_K(F)$.

Let $g : G \to F$ be a fold that maps $v' \to v$ and fixes all other elements of $G - v'$.
Partition the elements of $HOM(G,H)$ into sets $A = \{ x \in HOM(G,H) | x(v) = x(v') \}$ and
$B =  \{ x \in HOM(G,H) | x(v) \neq x(v') \}$. We claim 
$\mathscr{H}_H(F)$ is isomorphic to the subgraph of $\mathscr{H}_H(G)$ induced by $A$.  Given
$y \in HOM (F,H)$, define $\varphi_1^g(y) = yg$.  Similarly given $x \in A$
define $\varphi_1^f(x) = xf$.  Consider the composition $\varphi_1^f\varphi_1^g(y) = ygf = y$ since
$gf$ is the identity.  Conversely the composition $\varphi_1^g\varphi_1^f(x) = xfg = x$ when
restricted to $G-v'$ since $g$ is a fold.  However $xfg(v') = x(v) = x(v')$ since $x \in A$.
Thus $\varphi_1^g\varphi_1^f$ is the identity on $A$.  Consequently $\varphi_1^f$ and $\varphi_1^g$
are both bijections which establishes the isomorphism claim.

We now show $\mathscr{H}_H(G)$ dismantles to the subgraph induced by $A$
which is equivalent to dismantling to $\mathscr{H}_H(F)$.  Let $x' : G \to H$
and define $x \in A$ by
$$
x(u) = \left\{ \begin{array}{ll}
x'(u) & \mbox{ if } u \neq v' \\
x(v) & \mbox{ if } u = v'
\end{array} \right.
$$
Let $z : G \to H$ such that $z \sim x'$.  Let $wu \in E(G)$.
If $u \neq v'$, then $x(u) = x'(u)$.  Hence $z(w)x'(u) = z(w)x(u)$.
Since $z(w)x'(u) \in E(H)$, we conclude $z(w)x(u) \in E(H)$.
On the other hand, if $u = v'$, then 
$z(w)x(v') = z(w)x(v) = z(w)x'(v)$ and $z(w)x'(v) \in E(H)$.
We conclude $z \sim x$.
Hence, the mapping $x' \mapsto x$ is a fold of $\mathscr{H}_H(G)$ to $\mathscr{H}_H(G) - x'$.
Continuing in this manner, we may dismantle $\mathscr{H}_H(G)$ to $\mathscr{H}_H(F)$.

Using an analogous argument, one may show $\mathscr{H}_K(F)$ is isomorphic
to an induced subgraph of $\mathscr{H}_H(F)$, and that the latter dismantles 
to the former using the maps $\varphi_h^1$ and $\varphi_k^1$.
%
%
%
\end{proof}

\begin{cor}
\label{folds} 
If $G$ has a dismantling retraction to $F$ and $H$ has dismantling retraction to $K$, then 
$c(\mathscr{H}_H(G)) = c(\mathscr{H}_K(F))$.
\end{cor}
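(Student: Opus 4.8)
The plan is to lift the two given dismantling retractions to the homomorphism graphs using Lemma~\ref{fold}, and then to establish that folding a \emph{reflexive} graph leaves the number of components unchanged. Concretely, write $g:G\to F$ and $h:H\to K$ for the dismantling retractions guaranteed by the hypotheses, with corresponding sections $f$ and $k$. By Lemma~\ref{fold} the map $\varphi^f_h:\mathscr{H}_H(G)\to\mathscr{H}_K(F)$ is itself a dismantling retraction, so $\mathscr{H}_H(G)$ dismantles to $\mathscr{H}_K(F)$ through a finite sequence of folds
\[\mathscr{H}_H(G)=X_0\to X_1\to\cdots\to X_m=\mathscr{H}_K(F),\]
where $X_{i+1}=X_i-x_i$ for some vertex $x_i$. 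Since $\mathscr{H}_H(G)$ is reflexive and each $X_{i+1}$ is the subgraph of $X_i$ induced by deleting a single vertex, every $X_i$ is reflexive as well; this observation will be used at each step.

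For one of the two inequalities I would simply note that a dismantling retraction is in particular a retraction, so the hypotheses of Corollary~\ref{retracts} are met and $c(\mathscr{H}_H(G))\geq c(\mathscr{H}_K(F))$. It then remains to prove the reverse inequality, and in view of the fold sequence above it suffices to show that a single fold $\psi:X\to X-x$ of a reflexive graph $X$ satisfies $c(X-x)\geq c(X)$; chaining this along the $X_i$ yields $c(\mathscr{H}_H(G))\leq c(\mathscr{H}_K(F))$, and combining the two bounds gives the claimed equality.

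This final fold estimate is where I expect the real work to lie, and it is precisely the point at which reflexivity becomes indispensable: for loop-free graphs the analogue fails, since one may fold two non-adjacent isolated vertices together and thereby merge two components. To handle the reflexive case, set $w=\psi(x)$, so that the fold condition gives $N(x)\subseteq N(w)$ with $w\neq x$. Because $X$ is reflexive we have $x\in N(x)\subseteq N(w)$, whence $x\sim w$; thus $w$ lies in $X-x$ and in the same component of $X$ as $x$, so deleting $x$ destroys no component. Moreover, every neighbour of $x$ is a neighbour of $w$, so any walk in $X$ passing through $x$ can be rerouted through $w$ without leaving $X-x$, and hence no component is split either. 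Consequently two vertices of $X-x$ are connected in $X-x$ if and only if they are connected in $X$, giving $c(X-x)=c(X)$ and in particular the desired $c(X-x)\geq c(X)$. Assembling the pieces along the dismantling sequence completes the proof that $c(\mathscr{H}_H(G))=c(\mathscr{H}_K(F))$.
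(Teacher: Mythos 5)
Your proof is correct and takes essentially the same route as the paper: both rest on Lemma~\ref{fold}, Corollary~\ref{retracts}, and the observation that reflexivity of the homomorphism graph makes a vertex adjacent to its image under a fold, so that folding can neither merge nor destroy components. The only difference is presentational --- the paper applies this observation once to the composition $\varphi^g_k\varphi^f_h$ to conclude that $\varphi^f_h$ acts injectively on components, whereas you verify it one fold at a time along the dismantling sequence.
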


\begin{proof}
By Lemma~\ref{fold} there is a dismantling retraction $\varphi^f_h:\mathscr{H}_H(G)\to\mathscr{H}_K(F)$ with section 
$\varphi^g_k:\mathscr{H}_K(F)\to\mathscr{H}_H(G)$. 
For $x:G\to H$, $N(x)\subseteq N(\varphi^g_k\varphi^f_h(x))$. Since $\mathscr{H}_H(G)$ is reflexive, this implies that 
$x\sim \varphi^g_k\varphi^f_h(x)$. So, the map $\varphi^g_k\varphi^f_h$ maps each component of $\mathscr{H}_H(G)$ to itself. 
In particular, $\varphi^f_h$ must act injectively on the components of $\mathscr{H}_H(G)$.  Thus,
$c(\mathscr{H}_H(G)) \leq c(\mathscr{H}_K(F))$.  Conversely, Corollary~\ref{retracts} gives
$c(\mathscr{H}_H(G)) \geq c(\mathscr{H}_K(F))$.
\end{proof}

We can now calculate the circular mixing number and threshold for bipartite graphs related to $K_2$. 

\begin{prop}
\label{K_mmtrees}
Let $G$ be a tree or a complete bipartite graph on at least two vertices. Then $\mathfrak{M}_c(G)=\mathfrak{m}_c(G)=2$.
\end{prop}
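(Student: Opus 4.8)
The plan is to reduce the entire statement to a single fact about $K_2$, namely that $K_2$ is $(k,q)$-mixing for every rational $k/q>2$. First note that a tree or complete bipartite graph on at least two vertices is bipartite and has an edge, so $\chi_c(G)=2$; hence both $\mathfrak{m}_c(G)$ and $\mathfrak{M}_c(G)$ are at least $2$ by definition. Thus it suffices to prove $G$ is $(k,q)$-mixing for all $k/q>2$: this forces the set $\{k/q\geq 2:G\text{ is }(k,q)\text{-mixing}\}$ to contain every rational exceeding $2$, so its infimum is $2$ and $\mathfrak{m}_c(G)=2$; likewise every $r>2$ witnesses $\mathfrak{M}_c(G)\leq 2$, giving $\mathfrak{M}_c(G)=2$.

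The reduction to $K_2$ goes through dismantlability. I would first check that $G$ dismantles to $K_2$ in both cases. For a tree, take a longest path $v_0v_1\cdots v_\ell$ with $\ell\geq 2$; then $v_0$ is a leaf with $N(v_0)=\{v_1\}\subseteq N(v_2)$, so $v_0$ folds onto $v_2$, and induction on $|V(G)|$ (base case $K_2$) finishes. For a complete bipartite graph with parts $A,B$, any two vertices of the same part are twins, so each may be folded onto another until one vertex remains in each part, leaving $K_{1,1}=K_2$. With $G$ dismantling to $K_2$, apply Corollary~\ref{folds} with the fixed target $H=K=G_{k,q}$ (taking the empty dismantling retraction on the target side) to obtain $c(\mathscr{H}_{G_{k,q}}(G))=c(\mathscr{H}_{G_{k,q}}(K_2))$. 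Since $G$ and $K_2$ are loop-free and $k/q\geq\chi_c$ ensures $G\to G_{k,q}$, Proposition~\ref{colourHom} identifies these component counts with those of $\mathscr{C}_{G_{k,q}}(G)$ and $\mathscr{C}_{G_{k,q}}(K_2)$. Hence $G$ is $(k,q)$-mixing if and only if $K_2$ is. It is important that I keep the target $G_{k,q}$ fixed and only dismantle $G$: for non-coprime $(k,q)$ the graph $G_{k,q}$ need not be a core, so I cannot simplify the target, but Corollary~\ref{folds} only asks for a dismantling retraction on each side, and the identity qualifies.

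It then remains to show $K_2$ is $(k,q)$-mixing whenever $k/q>2$, i.e. $k\geq 2q+1$. A $(k,q)$-colouring of $K_2$ is an ordered adjacent pair, or \emph{arc}, $(a,b)$ with $q\leq|a-b|\leq k-q$ (indices mod $k$), and a single recolouring move changes one coordinate while preserving adjacency. I would show every arc is connected to the canonical arc $(0,q)$ in $\mathscr{C}_{G_{k,q}}(K_2)$. First, from $(a,b)$ one move takes the second coordinate to $a+q$, reaching $(a,a+q)$ (valid since $|a-(a+q)|=q$ and $k\geq 2q$). Next I rotate: the two moves $(a,a+q)\to(a,a+q+1)\to(a+1,a+q+1)$ are each legal because $|a-(a+q+1)|=q+1\leq k-q$ (using $k\geq 2q+1$) and then $|(a+1)-(a+q+1)|=q$; the result is $(a+1,(a+1)+q)$. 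Iterating this rotation carries any $(a,a+q)$ to $(0,q)$, so $\mathscr{C}_{G_{k,q}}(K_2)$ is connected.

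The main obstacle is precisely this last step: verifying connectivity of the arc graph of $K_2$ and pinning down where $k/q>2$ (equivalently $k\geq 2q+1$) is used. It is the inequality $q+1\leq k-q$ that makes the rotation move legal, and it fails at the boundary $k=2q$, where $G_{k,q}$ is a disjoint union of edges and $K_2$ is not $(k,q)$-mixing—consistent with $\mathfrak{m}_c=\mathfrak{M}_c=2$ being attained only as an infimum. The dismantling reductions themselves are routine; the one conceptual point to get right is that the target $G_{k,q}$ must be held fixed throughout.
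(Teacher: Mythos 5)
Your proof is correct and follows essentially the same route as the paper: dismantle $G$ to $K_2$, invoke Corollary~\ref{folds} (with the target $G_{k,q}$ held fixed) together with Proposition~\ref{colourHom}, and then verify that $K_2$ is $(k,q)$-mixing exactly when $k/q>2$. The only difference is that you spell out the dismantling of trees and complete bipartite graphs and the rotation argument for $K_2$, which the paper leaves as an easy exercise.
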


\begin{proof}
For any such graph $G$ there is a dismantling retraction to $K_2$, and so it suffices to show that 
$\mathfrak{M}_c(K_2)=\mathfrak{m}_c(K_2)=2$ by Corollary~\ref{folds}. However, it is an easy exercise to prove that 
$K_2$ is $(k,q)$-mixing if and only if $k/q>2$. The result follows.
\end{proof}

\subsection{Self mixing graphs}

In this section we examine the mixing properties of the endomorphism monoid, i.e. the collection of
homomorphisms $f: G \to G$.  We begin with general observations about the flexibility to locally modify a homomorphism.
The degree of a vertex $f$ in $\mathscr{H}_H(G)$ provides some indication of this flexibility. 
In this way, the following definition describes the most inflexible homomorphisms.

\begin{defn}
For a vertex $v\in V(G)$, we say that a homomorphism $\varphi:G\to H$ is  
\emph{frozen} if it is an isolated (reflexive) vertex in $\mathscr{H}_H(G)$.
\end{defn}

Slightly different definitions of frozen homomorphisms and colourings are given in~\cite{Winkler,Connectedness}.
In~\cite{BrewsterNoel} we study extensions of circular colourings and use frozen homomorphisms 
in several examples. (We used the terminology \emph{uniquely extendible} homomorphisms to fit the theme
of that paper.) The following is a straightforward observation, which has been noted by several authors 
including~\cite{Belanger,Dochtermann,Fieux}.

\begin{lem}[B{\'e}langer et al.~\cite{Belanger}; Dochtermann~\cite{Dochtermann}; Fieux and Lacaze~\cite{Fieux}]
\label{id}
$F$ is stiff if and only if every automorphism $\varphi:F\to F$ is frozen.
\end{lem}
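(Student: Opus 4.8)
The plan is to reduce the statement to a single endomorphism, the identity $\iota:=\mathrm{id}_F$, and then to recognise the neighbours of $\iota$ in $\mathscr{H}_F(F)$ as exactly the folds of $F$. The first reduction is that $\iota$ is frozen if and only if \emph{every} automorphism of $F$ is frozen. One direction is immediate, since $\iota$ is itself an automorphism. For the converse, fix an automorphism $\varphi$ and consider the map $\Phi:HOM(F,F)\to HOM(F,F)$ given by $\Phi(g)=\varphi^{-1}g$. Because $\varphi^{-1}$ is an automorphism it both preserves and reflects edges, so $\Phi$ is a bijection with $g\sim g'$ in $\mathscr{H}_F(F)$ if and only if $\Phi(g)\sim\Phi(g')$; that is, $\Phi$ is an automorphism of the graph $\mathscr{H}_F(F)$. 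As $\Phi(\varphi)=\iota$ and graph automorphisms take isolated vertices to isolated vertices, $\varphi$ is frozen whenever $\iota$ is. It therefore suffices to prove that $F$ is stiff if and only if $\iota$ is frozen.

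Next I would translate adjacency to $\iota$ into the language of folds. Unwinding the definition of $\mathscr{H}_F(F)$, and using that each edge $uv$ is read in both orientations, $\iota\sim g$ holds if and only if $a\,g(b)\in E(F)$ for every edge $ab\in E(F)$, which is equivalent to $N(v)\subseteq N(g(v))$ for every $v\in V(F)$. Hence $\iota$ is frozen precisely when the only endomorphism $g$ with $N(v)\subseteq N(g(v))$ for all $v$ is $g=\iota$. This is exactly the fold condition of Definition~\ref{specialHoms}: a fold of $F$ is determined by a pair of distinct vertices $v,w$ with $N(v)\subseteq N(w)$.

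With this dictionary in hand the two implications are short. If $\iota$ is not frozen, pick $g\neq\iota$ adjacent to $\iota$ and a vertex $v$ with $w:=g(v)\neq v$; then $N(v)\subseteq N(w)$, so the map fixing $F-v$ and sending $v\mapsto w$ is a fold onto the proper subgraph $F-v$, and $F$ is not stiff. Conversely, if $F$ is not stiff it folds to some $F-v$ via $v\mapsto w$ with $N(v)\subseteq N(w)$; regarding this fold as an endomorphism $g$ of $F$ gives $g\neq\iota$ with $N(u)\subseteq N(g(u))$ for all $u$, so $\iota$ is not frozen.

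The one point that needs genuine care, and the main obstacle, is the presence of loops, which are permitted throughout this section. I must check that the map $v\mapsto w$ really is a homomorphism when $v$ carries a loop: the only new constraint is that the image $ww$ of the loop $vv$ be an edge, and this is handled because $v\in N(v)\subseteq N(w)$ forces $v\sim w$, whence $w\in N(v)\subseteq N(w)$ gives $w\sim w$. I also need to confirm that the adjacency condition for $\mathscr{H}_F(F)$ is applied to both orientations of every edge, which is what yields the full inclusion $N(v)\subseteq N(g(v))$ rather than a one-sided version. These are routine verifications once the reduction and the fold dictionary above are in place.
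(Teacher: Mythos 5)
Your proposal is correct and follows essentially the same route as the paper: both arguments rest on the dictionary that the neighbours of the identity in $\mathscr{H}_F(F)$ are exactly the maps $g$ with $N(v)\subseteq N(g(v))$ for all $v$, and that a nontrivial such neighbour is the same thing as a fold of $F$. The only cosmetic difference is that you reduce an arbitrary automorphism $\varphi$ to the identity via the translation $g\mapsto\varphi^{-1}g$ of $\mathscr{H}_F(F)$, whereas the paper handles $\varphi$ directly through $N(\varphi(v))=\varphi(N(v))\subseteq N(\psi(v))$; your checks concerning loops and reading each edge in both orientations are exactly the points the paper's argument implicitly relies on.
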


\begin{proof}
Suppose that there is an automorphism $\varphi:F\to F$ which is not frozen. 
Thus there exists $\psi: F \to F$ and $v \in V(F)$ such that $\varphi \sim \psi$
and $\varphi(v) \neq \psi(v)$.  We have $N(\varphi(v)) = \varphi(N(v))$ since
$\varphi$ is an automorphism, and for every edge $uv$ we have $\varphi(u) \sim \psi(v)$
since $\varphi \sim \psi$.
Putting this together, we have $N(\varphi(v))=\varphi(N(v))\subseteq N(\psi(v))$. 
Hence $F$ is not stiff because we can define the fold $f:V(F) \to V(F)$ by 
\[
f(u)=\left\{\begin{array}{ll}
	u & \mbox{ if } u\neq \varphi(v),\\ 
	\psi(v) & \mbox{ if } u = \varphi(v).
	\end{array}
\right.
\]

On the other hand, if $F$ is not stiff, then the identity $1_F$ on $F$ is adjacent to an
arbitrary fold on $F$, which implies that $1_F$ is not frozen. The result follows.
\end{proof}

We can now characterize graphs $G$ such that $\mathscr{H}_G(G)$ is connected.
We remark this characterization is similar to the result of Brightwell and 
Winkler~\cite{Winkler}; however, their notion of dismantlable differs slightly
from ours (for non-reflexive graphs).

\begin{thm}
\label{TFAE}
Let $G$ be a graph and $G^*=\mathscr{H}_G(G)$. The following are equivalent:
\begin{enumerate}[(a)]
\item $\mathscr{H}_G(G)$ is connected; \label{GG}
\item $G$ is dismantlable;\label{Gd}
\item $\mathscr{H}_{G^*}(G^*)$ is connected;\label{HGG}
\item $G^*$ is dismantlable.\label{HGGd}
\end{enumerate}
\end{thm}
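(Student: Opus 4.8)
The plan is to prove the theorem by establishing a cycle of implications among the four conditions, using the machinery about $\mathscr{H}_H(G)$ developed earlier. The natural route is $(\ref{Gd}) \Rightarrow (\ref{GG})$, then $(\ref{GG}) \Rightarrow (\ref{HGG})$ (or directly relate $G^*$ and $G$), then handle the starred graph $G^*$ analogously, and close the loop. The key external tools are Corollary~\ref{folds} (a dismantling retraction $G \to K$ gives $c(\mathscr{H}_H(G)) = c(\mathscr{H}_H(K))$), Lemma~\ref{id} (stiffness $\Leftrightarrow$ every automorphism is frozen), and Theorem~\ref{uniqueCoreStiff}\eqref{uniqueCore} (existence of a unique stiff graph reachable by a dismantling retraction).

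\medskip

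First I would prove $(\ref{Gd}) \Leftrightarrow (\ref{GG})$. If $G$ is dismantlable, it has a dismantling retraction to a rigid graph $R$. Applying Corollary~\ref{folds} with $F=H=K=R$, we get $c(\mathscr{H}_G(G)) = c(\mathscr{H}_R(R))$; but $R$ rigid means $HOM(R,R) = \{1_R\}$, so $\mathscr{H}_R(R)$ is a single (reflexive) vertex and is connected, giving $(\ref{GG})$. For the converse, I would argue contrapositively via the unique stiff graph. By Theorem~\ref{uniqueCoreStiff}\eqref{uniqueCore}, $G$ has a dismantling retraction to a stiff graph $Y$, and by Corollary~\ref{folds} $c(\mathscr{H}_G(G)) = c(\mathscr{H}_Y(Y))$. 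If $G$ is \emph{not} dismantlable, then $Y$ is not rigid, so it has a nontrivial automorphism; by Lemma~\ref{id}, since $Y$ is stiff, every automorphism of $Y$ is frozen. The identity $1_Y$ and the nontrivial automorphism are thus two distinct isolated vertices of $\mathscr{H}_Y(Y)$, so $\mathscr{H}_Y(Y)$ is disconnected, hence so is $\mathscr{H}_G(G)$. This proves $(\ref{GG}) \Rightarrow (\ref{Gd})$.

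\medskip

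Next, to fold in the starred conditions, I note $(\ref{GG}) \Leftrightarrow (\ref{HGGd})$ should follow from a clean structural observation: $G^* = \mathscr{H}_G(G)$ is a reflexive graph, and a reflexive graph is dismantlable (in the standard sense, and hence in the paper's sense, since the unique rigid reflexive graph is $\mathbf{1}$) precisely when it is connected and cop-win. The cleanest argument is that for reflexive graphs, dismantlability is equivalent to being connected and having the cop-win property, but here I would instead use the already-established equivalence $(\ref{Gd}) \Leftrightarrow (\ref{GG})$ applied to the graph $G^*$ itself: that equivalence says $\mathscr{H}_{G^*}(G^*)$ is connected iff $G^*$ is dismantlable, which is exactly $(\ref{HGG}) \Leftrightarrow (\ref{HGGd})$. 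So the equivalences $(\ref{Gd})\Leftrightarrow(\ref{GG})$ and $(\ref{HGG})\Leftrightarrow(\ref{HGGd})$ come from one general lemma applied to $G$ and to $G^*$ respectively. It then remains only to link the $G$-pair to the $G^*$-pair, e.g. by showing $G^*$ is connected (condition~\ref{GG}) iff $G^*$ is dismantlable (condition~\ref{HGGd}) via the reflexive structure of $G^*$, or by verifying $c(\mathscr{H}_{G^*}(G^*)) = c(\mathscr{H}_G(G))$ when the relevant connectivity holds.

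\medskip

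\textbf{The main obstacle I anticipate} is the bridge between $G$ and its square $G^* = \mathscr{H}_G(G)$: the two equivalences $(\ref{Gd})\Leftrightarrow(\ref{GG})$ and $(\ref{HGG})\Leftrightarrow(\ref{HGGd})$ are each instances of a single general principle, but connecting $(\ref{GG})$ to $(\ref{HGG})$ requires understanding how connectivity of $\mathscr{H}_G(G)$ relates to that of $\mathscr{H}_{G^*}(G^*)$. The point that makes this tractable is that $G^*$ is always reflexive, so whenever $G^*$ is connected it is in fact dismantlable to $\mathbf{1}$ (connected reflexive graphs arising this way retract to a point), which forces $\mathscr{H}_{G^*}(G^*)$ to be connected; conversely a disconnected $G^*$ yields a disconnected $\mathscr{H}_{G^*}(G^*)$ since distinct components cannot be joined by local moves. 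Making this last reflexive-graph argument precise — rather than merely invoking the general lemma — is where I would spend the most care, and I would lean on the standard fact (referenced via~\cite{Nowakowski,Quilliot}) that a connected reflexive cop-win graph dismantles to a single vertex.
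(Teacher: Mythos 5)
Your treatment of the two ``local'' equivalences is essentially the paper's own: for (\ref{GG})~$\Leftrightarrow$~(\ref{Gd}) you dismantle $G$ to its stiff graph $Y$ via Theorem~\ref{uniqueCoreStiff}(\ref{uniqueCore}), apply Corollary~\ref{folds} to get $c(\mathscr{H}_G(G))=c(\mathscr{H}_Y(Y))$, and use Lemma~\ref{id} to see that a non-rigid stiff $Y$ has at least two frozen automorphisms while a rigid $Y$ gives a one-vertex homomorphism graph; and you correctly observe that the same lemma applied to $G^*$ gives (\ref{HGG})~$\Leftrightarrow$~(\ref{HGGd}). That much is sound and matches the paper.

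The gap is in the bridge between the $G$-conditions and the $G^*$-conditions. Your proposed link is the implication ``$G^*$ connected $\Rightarrow$ $G^*$ dismantlable,'' justified by the parenthetical claim that ``connected reflexive graphs arising this way retract to a point,'' with a fallback to the cop-win characterization of~\cite{Nowakowski,Quilliot}. This is not an argument: connectivity of a reflexive graph does \emph{not} imply dismantlability (a reflexive cycle of length at least $4$ is connected but stiff and not rigid), and you never use any property of ``arising this way''; invoking the cop-win equivalence only trades the problem for showing $G^*$ is cop-win, which is no easier. The paper closes this loop differently and you already have the tool in hand: having established (\ref{GG})~$\Rightarrow$~(\ref{Gd}), take a dismantling retraction $g:G\to F$ with $F$ rigid and apply Lemma~\ref{fold} (not just Corollary~\ref{folds}) to obtain a dismantling retraction $\mathscr{H}_G(G)\to\mathscr{H}_F(F)$; since $F$ is rigid, $\mathscr{H}_F(F)$ is a single reflexive vertex, hence rigid, so $G^*$ is dismantlable. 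That gives (\ref{Gd})~$\Rightarrow$~(\ref{HGGd}), and the cheap direction (\ref{HGGd})~$\Rightarrow$~(\ref{GG}) follows because a reflexive graph can only dismantle to $\mathbf{1}$ and folds never disconnect a graph. Your observation that a disconnected $G^*$ forces $\mathscr{H}_{G^*}(G^*)$ to be disconnected is correct for reflexive $G^*$ and could substitute for that last step, but the forward bridge as you wrote it does not go through.
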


\begin{proof}
First let us show (\ref{GG}) $\Leftrightarrow$ (\ref{Gd}). 
Recall by Theorem~\ref{uniqueCoreStiff} (\ref{uniqueCore}) there is a dismantling retraction (a sequence of folds) $g:G\to F$ where $F$ is stiff. 
If $G$ is not dismantlable, then by definition $F$ is not rigid. Let $\psi:F\to F$ so that $\psi\neq1_F$, where $1_F$ denotes the identity on $F$. 
By Lemma~\ref{id}, the map $1_F$ is frozen and so $\psi$ is not $\times$-homotopic to $1_F$. It follows that $\mathscr{H}_F(F)$ 
is disconnected, and thus $\mathscr{H}_G(G)$ is disconnected by Corollary~\ref{folds}. On the other hand, if $G$ is dismantlable, then 
$F$ is rigid.  Therefore $\mathscr{H}_F(F)$ is composed of a single reflexive vertex, 
and thus $\mathscr{H}_G(G)$ is connected by Corollary~\ref{folds}. The same argument proves (\ref{HGG}) $\Leftrightarrow$ (\ref{HGGd}) 
by replacing $G$ with $G^*$. 

Next we show (\ref{Gd}) $\Rightarrow$ (\ref{HGGd}). Indeed, let $g:G\to F$ be a dismantling retraction such that $F$ is rigid. Then by Lemma~\ref{fold} there is a dismantling retraction $\varphi:\mathscr{H}_G(G)\to\mathscr{H}_F(F)$. Since $F$ is rigid we have that $\mathscr{H}_F(F)$ is composed of a single reflexive vertex, and is therefore rigid. Therefore $G^*$ is dismantlable.

Finally, note that the only rigid graph to which a reflexive graph can dismantle is a single reflexive vertex,
and no fold can destroy a component.  Thus every reflexive dismantable graph is connected.  We conclude,
(\ref{HGGd}) $\Rightarrow$ (\ref{GG}) since $G^*=\mathscr{H}_G(G)$. The result follows.
\end{proof}

This translates into the following result for loop-free graphs.

\begin{cor}
A loop-free graph $G$ is $G$-mixing if and only if $G$ is dismantlable.
\end{cor}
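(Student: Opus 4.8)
The plan is to leverage Theorem~\ref{TFAE} directly, since the final corollary asserts that a loop-free graph $G$ is $G$-mixing if and only if $G$ is dismantlable. The key observation is that $G$-mixing refers to connectedness of the colour graph $\mathscr{C}_G(G)$, whereas Theorem~\ref{TFAE} is phrased in terms of the homomorphism graph $\mathscr{H}_G(G)$. So the essential task is to bridge these two notions of connectedness, and for this I would invoke Proposition~\ref{colourHom}.

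First I would recall that, by definition, $G$ is $G$-mixing precisely when $\mathscr{C}_G(G)$ is connected. Since $G$ is loop-free and trivially $G \to G$ (via the identity), Proposition~\ref{colourHom} applies and tells us that there is a walk from $f$ to $g$ in $\mathscr{H}_G(G)$ if and only if there is a walk from $f$ to $g$ in $\mathscr{C}_G(G)$. Consequently $\mathscr{C}_G(G)$ is connected if and only if $\mathscr{H}_G(G)$ is connected. This is the crucial step that transfers the statement into the language of the homomorphism graph.

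Having established that $G$-mixing is equivalent to connectedness of $\mathscr{H}_G(G)$, I would then simply apply the equivalence (\ref{GG}) $\Leftrightarrow$ (\ref{Gd}) of Theorem~\ref{TFAE}, which states that $\mathscr{H}_G(G)$ is connected if and only if $G$ is dismantlable. Chaining these equivalences yields that $G$ is $G$-mixing if and only if $G$ is dismantlable, as required.

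The proof is essentially a two-line deduction, so I do not anticipate a genuine obstacle; the only point requiring care is verifying that the hypotheses of Proposition~\ref{colourHom} are met, namely that $G$ is loop-free (given) and that $G \to G$ holds (immediate from the identity endomorphism). Since the corollary is explicitly restricted to loop-free graphs, this is exactly where the distinction between $\mathscr{C}_G(G)$ and $\mathscr{H}_G(G)$ collapses, and the caveat noted after Proposition~\ref{colourHom} (that the equivalence fails for reflexive graphs) is precisely what makes the loop-free hypothesis necessary.
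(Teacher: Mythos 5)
Your proposal is correct and follows exactly the paper's own argument: the paper likewise applies Proposition~\ref{colourHom} to identify the components of $\mathscr{C}_G(G)$ with those of $\mathscr{H}_G(G)$, and then invokes the equivalence of (\ref{GG}) and (\ref{Gd}) in Theorem~\ref{TFAE}. Your added remark about why the loop-free hypothesis is essential is a nice touch but does not change the substance.
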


\begin{proof}
By Theorem~\ref{TFAE}, $\mathscr{H}_G(G)$ is connected if and only if $G$ is dismantlable. By Proposition~\ref{colourHom} $\mathscr{C}_G(G)$ and $\mathscr{H}_G(G)$ have the same components and so $G$ is $G$-mixing if and only if $G$ is dismantlable. 
\end{proof}

%
%
\section{Homotopies of non-surjective {$\boldsymbol{(k,q)}$}-colourings}
\label{parents}

The goal of this section is to apply the ideas from Section~\ref{MixHom} to (i)
obtain upper bounds on $\mathfrak{M}_c$ which are tighter than 
Theorem~\ref{col(G)Bound} for certain graphs, and (ii) provide a condition on $G$ which implies $\mathfrak{M}_c(G)\leq \mathfrak{M}(G)$. To achieve this goal, we focus on
$(k,q)$-colourings which are $\times$-homotopic to non-surjective 
colourings. 

\begin{defn}
Suppose that $G$ is $(k,q)$-colourable. If every $(k,q)$-colouring of $G$ is $\times$-homotopic to a non-surjective 
$(k,q)$-colouring, we say that $G$ is \emph{$(k,q)$-flexible}.
\end{defn}

We begin with the case that $\gcd(k,q)=1$ and focus on the relationship between $(k,q)$ and its lower parent $(k',q')$. 

\begin{prop}[Bondy and Hell~\cite{BondyHell}; Hell and Ne{\v{s}}et{\v{r}}il~\cite{HellNesetril}]
\label{d=1}
Let $(k',q')$ be the lower parent of $(k,q)$. Then for $0\leq i\leq k-1$ there is a dismantling retraction $r:(G_{k,q}-i)\to G_{k',q'}$ 
\end{prop}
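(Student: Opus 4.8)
The plan is to build the dismantling retraction by hand, reducing first to a single convenient value of $i$. The rotation $\rho:x\mapsto x+1\pmod k$ is an automorphism of $G_{k,q}$, and $\rho^i$ carries $G_{k,q}-0$ isomorphically onto $G_{k,q}-i$; so it suffices to produce a dismantling retraction $r:(G_{k,q}-0)\to G_{k',q'}$. I would also record at the outset that $\gcd(k',q')=1$ follows immediately from $kq'-k'q=1$, so that $G_{k',q'}$ is a core, and in particular stiff. This is the fact that will let me recognise the end of the folding process.

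The engine is the interval description of neighbourhoods. As in~\eqref{eqn:availcirc}, the non-neighbours of $x$ in $G_{k,q}$ are exactly the arc $[x-q+1,x+q-1]$ of length $2q-1$, so $N_{G_{k,q}}(x)=\{0,\dots,k-1\}\setminus[x-q+1,x+q-1]$. Writing $B_x:=[x-q+1,x+q-1]\cup\{0\}$ for the non-neighbours of $x$ in $G_{k,q}-0$ together with the deleted vertex, the fold criterion $N(a)\subseteq N(b)$ from Definition~\ref{specialHoms} becomes the arc-containment $B_b\subseteq B_a$. In the full graph the two arcs have equal length, so no containment can hold and $G_{k,q}$ is stiff (indeed a core); deleting $0$ breaks this symmetry only at the two ends of the gap. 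Concretely, for $q\ge2$ one has $B_{q-1}=[0,2q-2]\subseteq[0,2q-1]=B_q$, which gives a fold $q\mapsto q-1$, and a symmetric computation gives $B_{k-q+1}\subseteq B_{k-q}$ and hence a fold $(k-q)\mapsto(k-q+1)$. These peel the two vertices flanking the deleted vertex, and by Definition~\ref{specialHoms}(d) any composition of such folds is a dismantling retraction.

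I would then iterate, and here induction on $q$ is the natural organising principle, driven by the Farey/Stern--Brocot relation between $(k,q)$ and $(k',q')$. The base case $q=1$ is immediate: $G_{k,1}=K_k$, its lower parent is $(k-1,1)$, and $G_{k,1}-0=K_{k-1}=G_{k-1,1}$, so the empty composition of folds already suffices. For the inductive step, when $q\ge 2q'$ the identity $kq'-k'q=1$ shows that $(k',q')$ is also the lower parent of the strictly smaller pair $(k-k',q-q')$, since $(k-k')q'-k'(q-q')=kq'-k'q=1$, $\gcd(k-k',q-q')=1$, and $q-q'\ge q'$; the peeling above should then be continued to fold $G_{k,q}-0$ down to a copy of $G_{k-k',q-q'}-0$, after which the inductive hypothesis finishes. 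The main obstacle is precisely this bookkeeping: one must check that an arc-containment fold remains available at every stage and, crucially, that the surviving induced subgraph is isomorphic to the intended circular clique rather than to some other $G_{k'',q''}$. This is exactly where $kq'-k'q=1$ is essential---it fixes the number of folds at $k-1-k'$ and pins down the cyclic spacing of the surviving vertices---and where the continued-fraction structure of $k/q$ (equivalently, the descent of Lemma~6.6 of~\cite{HellNesetril}) must be used to treat the complementary regime $q'\le q<2q'$, in which the lower parent does not directly govern a smaller pair. Once the folds terminate at a copy of $G_{k',q'}$, its stiffness makes it the unique dismantling target guaranteed by Theorem~\ref{uniqueCoreStiff}(\ref{uniqueCore}), confirming that the construction lands exactly where claimed.
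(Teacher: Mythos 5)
The paper does not actually prove Proposition~\ref{d=1}; it cites Bondy--Hell and Hell--Ne{\v{s}}et{\v{r}}il, and the intended mechanism is only visible in the neighbouring Proposition~\ref{d>1}, where the dismantling proceeds by the successive folds $i+q\mapsto i+q-1$, then $i+2q\mapsto i+2q-1$, and so on, peeling off the arithmetic progression $i, i+q, i+2q,\dots$. Your opening moves are sound and consistent with that mechanism: the reduction to $i=0$ by rotation, the translation of the fold condition $N(a)\subseteq N(b)$ into the arc-containment $B_b\subseteq B_a$, and the verification that $q\mapsto q-1$ and $k-q\mapsto k-q+1$ are legitimate folds of $G_{k,q}-0$ are all correct (and the $q=1$ base case is fine).

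The gap is that everything after those first two folds is a plan rather than a proof, and it is precisely the part that carries the content of the proposition. You assert that the peeling ``should then be continued'' to reach a copy of $G_{k-k',q-q'}-0$ when $q\geq 2q'$, but you never exhibit the folds that accomplish this, never verify that an arc-containment remains available after the first deletions change the non-neighbourhoods, and never show that the surviving vertex set induces a circular clique with the claimed parameters; you also explicitly leave the regime $q'\leq q<2q'$ untreated. Appealing to Theorem~\ref{uniqueCoreStiff}(\ref{uniqueCore}) at the end does not close this: uniqueness of the stiff dismantling retract only tells you that whatever stiff graph you land on is canonical, not that it is isomorphic to $G_{k',q'}$ --- establishing that isomorphism is exactly the claim being proved. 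To complete the argument you would need to carry out the iteration concretely (for instance, show by the same arc computation that after deleting $0,q,\dots,(j-1)q$ the vertex $jq$ folds onto $jq-1$, determine where the process must stop, and verify via $kq'-k'q=1$ that the $k'$ surviving vertices induce $G_{k',q'}$), or else give the explicit retraction formula and check it is a composition of folds. As written, the proposal identifies the right tool but does not prove the statement.
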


\begin{lem}
\label{relpr}
Let $(k',q')$ be the lower parent of $(k,q)$. If $G$ is $(k,q)$-flexible, then $c(\mathscr{H}_{k,q}(G)) \leq c(\mathscr{H}_{k',q'}(G))$. 
\end{lem}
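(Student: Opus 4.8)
The plan is to exhibit a graph homomorphism from $\mathscr{H}_{k',q'}(G)$ to $\mathscr{H}_{k,q}(G)$ that is \emph{surjective on connected components}; since a homomorphism sends each connected subgraph into a single component, this yields at once $c(\mathscr{H}_{k,q}(G)) \le c(\mathscr{H}_{k',q'}(G))$. Fix the dismantling retraction $r_0 : (G_{k,q}-0) \to G_{k',q'}$ supplied by Proposition~\ref{d=1}, with section $s_0$, and let $s := s_0$ viewed as a homomorphism $G_{k',q'} \to G_{k,q}$. Post-composition with $s$ is precisely the homomorphism $\varphi^{g}_{k}$ of Lemma~\ref{homs} taken with $g = \mathrm{id}_{G}$ and $k = s$; call it $\Phi : \mathscr{H}_{k',q'}(G) \to \mathscr{H}_{k,q}(G)$, $y \mapsto sy$. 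It induces a well-defined map $\hat\Phi$ on components, and it remains to prove $\hat\Phi$ is surjective.

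Let $C$ be a component of $\mathscr{H}_{k,q}(G)$. Since $G$ is $(k,q)$-flexible, $C$ contains a non-surjective colouring $f$, missing some colour $i$, so $f \in HOM(G, G_{k,q}-i)$. By Proposition~\ref{d=1} there is a dismantling retraction $r_i : (G_{k,q}-i) \to G_{k',q'}$ with section $s_i$. Exactly as in the proof of Corollary~\ref{folds}, in the reflexive graph $\mathscr{H}_{G_{k,q}-i}(G)$ we have $N(f) \subseteq N(s_i r_i f)$, hence $f \sim s_i r_i f$ there, and therefore also in $\mathscr{H}_{k,q}(G)$. Writing $g := r_i f \in HOM(G, G_{k',q'})$, this gives $f \sim s_i g$ in $\mathscr{H}_{k,q}(G)$. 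If I can join $s_i g$ to $s g = \Phi(g)$ by a walk in $\mathscr{H}_{k,q}(G)$, then $\Phi(g)\in C$, so $C = \hat\Phi(\text{the component of } g)$ and surjectivity follows. For this I use pre-composition: by Lemma~\ref{homs} applied with source map $g : G \to G_{k',q'}$ and target map $k = \mathrm{id}_{G_{k,q}}$, the assignment $y \mapsto yg$ is a homomorphism $\mathscr{H}_{k,q}(G_{k',q'}) \to \mathscr{H}_{k,q}(G)$, so \emph{any} walk from $s_i$ to $s_0$ in $\mathscr{H}_{k,q}(G_{k',q'})$ is carried to a walk from $s_i g$ to $s_0 g = \Phi(g)$.

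Thus the whole argument reduces to one claim, which I expect to be the main obstacle: \emph{the embeddings $s_0, s_1, \dots, s_{k-1} : G_{k',q'} \to G_{k,q}$ all lie in a single component of $\mathscr{H}_{k,q}(G_{k',q'})$.} The indices $i$ and $0$ above are arbitrary, so pairwise $\times$-homotopy of the $s_i$ is exactly what is needed. I would establish this using the rotational automorphism $\rho : x \mapsto x+1$ of $G_{k,q}$: the subgraphs $G_{k,q}-i$ and $G_{k,q}-(i+1)$ agree on all but one vertex and are interchanged by $\rho$, and the explicit form of the retraction in Proposition~\ref{d=1} lets one slide the embedded copy of $G_{k',q'}$ from $s_i$ to $s_{i+1}$ through a short chain of $\times$-homotopic maps, adjusting the image of one vertex at a time; concatenating these chains gives $s_i \sim \cdots \sim s_0$ for every $i$. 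The point is that this delicate step is purely internal to the circular cliques and involves neither $G$ nor the flexibility hypothesis, while everything depending on $G$ is dispatched cleanly by Proposition~\ref{d=1}, Lemma~\ref{homs}, and the mechanism of Corollary~\ref{folds}.
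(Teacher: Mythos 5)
Your overall architecture is sound: post-composition $\Phi=\varphi^{\mathrm{id}}_{s_0}$ is a homomorphism of homomorphism graphs, component-surjectivity of $\Phi$ gives the inequality, the step $f\sim s_i r_i f$ via the mechanism of Corollary~\ref{folds} is correct, and transporting a walk by pre-composition with $g$ is a legitimate use of Lemma~\ref{homs}. But the proof is incomplete exactly where you flag it: the claim that $s_0,s_1,\dots,s_{k-1}$ all lie in one component of $\mathscr{H}_{G_{k,q}}(G_{k',q'})$ is never established, and the ``rotate and slide one vertex at a time'' sketch is not a proof. That step is genuinely delicate: each intermediate map must be verified to be a homomorphism adjacent to its predecessor, the embedded copy of $G_{k',q'}$ is quite rigid inside $G_{k,q}$ (for instance when $k/q<4$ the winding invariant $\sigma$ of Section~\ref{lb} constrains every legal move on cliques and odd cycles of $G_{k',q'}$), and the argument must use $\gcd(k,q)=1$ somewhere --- it is precisely the hard core of the lemma that has been deferred. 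A further subtlety: with the natural rotational identifications one should only expect $s_i\sim s_0\alpha$ for some automorphism $\alpha$ of $G_{k',q'}$, not $s_i\sim s_0$ on the nose; this weaker statement still gives surjectivity of $\hat\Phi$ (since $s_0\alpha g=\Phi(\alpha g)$), but your write-up does not notice that this is all you can get, nor that it suffices.

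The paper sidesteps the comparison of sections entirely by working with colourings of $G$: the set $S$ of $(k,q)$-colourings missing the single colour $0$ induces a subgraph isomorphic to $\mathscr{H}_{G_{k,q}-0}(G)$, which has exactly $c(\mathscr{H}_{k',q'}(G))$ components by Proposition~\ref{d=1} and Corollary~\ref{folds}, and then every colouring is shown to be homotopic to an element of $S$ by the following recolouring step --- which is the ingredient missing from your argument. If $g$ misses colour $i$, then every vertex coloured $i+q$ may be recoloured $i+q-1$: its neighbours' colours avoid $[i+1,i+2q-1]$ and also avoid $i$, hence avoid the non-neighbourhood $[i,i+2q-2]$ of $i+q-1$. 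This produces a homotopic colouring missing $i+q$; iterating and using $\gcd(k,q)=1$, one reaches a colouring missing $0$. To salvage your version, apply this same step to $s_i$ itself: it yields a map $t$ into $G_{k,q}-0$ with $s_i\sim t\sim s_0(r_0t)$, and $r_0t$ is an automorphism of $G_{k',q'}$ because $G_{k',q'}$ is a core. Either way, the recolouring argument must appear explicitly; without it the lemma is not proved.
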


\begin{proof}
Let $S$ be the set of all $(k,q)$-colourings of $G$ which do not map to the colour $0$. It is clear that the subgraph $X$ of $\mathscr{H}_{k,q}(G)$ induced by $S$ is isomorphic to $\mathscr{H}_{G_{k,q}-0}(G)$. By Proposition~\ref{d=1} and Corollary~\ref{folds}
$c(X) = c(\mathscr{H}_{k',q'}(G))$. Therefore, it suffices to show that every $(k,q)$-colouring of $G$ is $\times$-homotopic 
to at least one element of $S$. 
(In the case that some $(k,q)$-colouring is homotopic to elements of $S$ in different components of $X$, $\mathscr{H}_{k,q}(G)$ will
have fewer components than $\mathscr{H}_{k',q'}(G)$.)

Let $f:G\to G_{k,q}$ be arbitrary. Since $G$ is $(k,q)$-flexible, $f$ is $\times$-homotopic to a non-surjective homomorphism $g$. If $g$ does not map to $0$, then we are done as $g\in S$. Otherwise, suppose that $g$ does not map to colour $i$. Notice that every vertex coloured $i+q$ by $g$ can be recoloured $i+q-1$. This gives us a $(k,q)$-colouring $h$ which is $\times$-homotopic to $g$ (and therefore $\times$-homotopic to $f$) and does not use $i+q$.   Continuing in this way we can construct a sequence of colourings that do not use
colour $i+2q, i+3q, \dots$ (respectively), and are each $\times$-homotopic to $f$.  Since $k$ and $q$ are relatively prime, we will reach an element of $S$ as required.
\end{proof}

\begin{prop}
\label{ret}
Let $k,q,d$ be positive integers such that $\gcd(k,q)=1$. Then there is a retraction $r:G_{kd,qd}\to G_{k,q}$. 
\end{prop}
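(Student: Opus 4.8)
The plan is to exhibit the retraction and its section explicitly and then verify the homomorphism property by a direct arithmetic computation. Define the candidate section $s:G_{k,q}\to G_{kd,qd}$ by $s(i)=di$, which places the copy of $G_{k,q}$ on the multiples of $d$, and the candidate retraction $r:G_{kd,qd}\to G_{k,q}$ by $r(j)=\lfloor j/d\rfloor$, which collapses each length-$d$ arc $\{id,id+1,\dots,id+d-1\}$ onto the single vertex $i$. Since $r(s(i))=\lfloor di/d\rfloor=i$ for every $i$, the identity $rs=1_{G_{k,q}}$ is immediate; hence, once both $r$ and $s$ are shown to be homomorphisms, $r$ is a retraction with section $s$ in the sense of Definition~\ref{specialHoms}.

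It is convenient to record adjacency in a circular clique in the form: in $G_{n,p}$ one has $x\sim y$ if and only if $p\le (y-x)\bmod n\le n-p$. Using this, checking that $s$ is a homomorphism is a one-line scaling: if $q\le(i'-i)\bmod k\le k-q$, then $(di'-di)\bmod kd=d\bigl((i'-i)\bmod k\bigr)$ satisfies $qd\le(di'-di)\bmod kd\le kd-qd$, so $s(i)\sim s(i')$.

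The substance of the argument, and the step I expect to require the most care, is showing that $r$ is a homomorphism. Given an edge $jj'$ of $G_{kd,qd}$, write $j=id+a$ and $j'=i'd+a'$ with $a,a'\in\{0,\dots,d-1\}$, so that $r(j)=i$ and $r(j')=i'$. Setting $m=(i'-i)\bmod k$ and $f=(j'-j)\bmod kd$, one computes $f=\bigl(md+(a'-a)\bigr)\bmod kd$ with $-(d-1)\le a'-a\le d-1$. The hypothesis $qd\le f\le kd-qd$ then pins down $m$: in the generic case $f=md+(a'-a)$, and the resulting bounds force $q\le m\le k-q$, i.e. $r(j)\sim r(j')$, while the only wrap-around case ($m=0$ with $a'<a$) is incompatible with $f\ge qd$. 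The delicate point is precisely that the slack $a'-a$ coming from the rounding must not push $m$ outside $[q,k-q]$; this is where the integrality of $m$ together with the common factor $d$ does the work, and it is the reason a naive triangle-inequality estimate (which loses a summand of size $d-1$ at each endpoint) is too lossy and a direct case analysis on the two directions around the cycle is needed.

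Finally, I would remark that this explicit construction does not actually use $\gcd(k,q)=1$, so it proves the statement for all admissible $k,q,d$. Alternatively, if one is willing to invoke that $G_{k,q}$ is a core when $\gcd(k,q)=1$, the result follows more quickly: a homomorphism $\phi:G_{kd,qd}\to G_{k,q}$ exists since $kd/qd=k/q$, its restriction $\phi\circ s$ is an endomorphism of the core $G_{k,q}$ and hence an automorphism $\alpha$, and then $r:=\alpha^{-1}\phi$ is a retraction with section $s$ because $r\circ s=\alpha^{-1}(\phi s)=\alpha^{-1}\alpha=1_{G_{k,q}}$.
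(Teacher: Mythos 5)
Your proof is correct and uses the same map as the paper, which defines $r(u)=\lfloor u/d\rfloor$ and leaves the verification that $q\le\lfloor u/d\rfloor-\lfloor v/d\rfloor\le k-q$ as a routine check; you simply make the section $s(i)=di$ and the endpoint arithmetic explicit. (The paper's proof likewise never uses $\gcd(k,q)=1$, so your observation on that point is consistent with it.)
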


\begin{proof}
Define $r:G_{kd,qd}\to G_{k,q}$ by $r(u) = \lfloor u/d \rfloor$.  Suppose $qd \leq u-v \leq kd-qd$.  
It is straightforward to verify $q \leq \lfloor u/d \rfloor - \lfloor v/d \rfloor \leq k-q$.
\end{proof}

\begin{prop}
\label{d>1}
Let $k,q,d$ be positive integers such that $\gcd(k,q)=1$. Then for $0\leq i\leq kd-1$ 
there is a dismantling retracion $r:(G_{kd,qd}-i)\to G_{k(d-1),q(d-1)}$.
\end{prop}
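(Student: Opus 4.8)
The plan is to prove the result by exhibiting an explicit sequence of folds. First I would reduce to the case $i=0$: since $x\mapsto x+i$ is an automorphism of $G_{kd,qd}$, we have $G_{kd,qd}-i\cong G_{kd,qd}-0$, so it suffices to dismantle $G_{kd,qd}-0$ (I also take $d\geq 2$, the case $d=1$ being degenerate). The target will be the subgraph induced on the non-multiples of $d$; writing $S=\{0,d,2d,\dots,(k-1)d\}$ for the multiples of $d$ (note that $S$ consists of one vertex from each of the $k$ blocks collapsed by the retraction $r$ of Proposition~\ref{ret}), I claim that $G_{kd,qd}-0$ dismantles by folding away $S\setminus\{0\}$ one vertex at a time, and that the graph induced on $V(G_{kd,qd})\setminus S$ is isomorphic to $G_{k(d-1),q(d-1)}$.

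For the folding, the key computation is that in $G_{kd,qd}$ the neighbourhoods of $v$ and $v-1$ differ in exactly one vertex, namely $N(v)\setminus N(v-1)=\{v-qd\}$; this follows immediately from the interval description of non-neighbourhoods $\overline{N(v)}=[v-qd+1,v+qd-1]$. Hence a fold of $v$ onto $v-1$ is legal in $G_{kd,qd}-R$ (with $R$ the set removed so far) as soon as the single obstruction $v-qd$ has already been removed. I would then order the multiples of $d$ as $v_t:=tqd \bmod kd$ for $t=0,1,\dots,k-1$; because $\gcd(k,q)=1$ these are distinct and exhaust $S$, with $v_0=0$ and $v_t-qd\equiv v_{t-1}$. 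Folding $v_t$ onto $v_t-1$ in increasing order of $t$ is therefore legal at every step, since $v_t-1$ is a non-multiple of $d$ (hence still present) while the obstruction $v_{t-1}$ was removed at the previous step. This composition of $k-1$ folds dismantles $G_{kd,qd}-0$ onto the subgraph induced by the survivors $V\setminus S$.

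It remains to identify the survivor graph, and this is where the real work lies. I would use the order-preserving compression $\phi(x)=x-\lceil x/d\rceil$, a bijection from $V\setminus S$ onto $\{0,1,\dots,k(d-1)-1\}$ respecting the cyclic order. To show $\phi$ is an isomorphism onto $G_{k(d-1),q(d-1)}$ it suffices to check that it carries non-neighbourhoods to non-neighbourhoods. Writing a survivor as $x=md+r$ with $1\le r\le d-1$, the interval $[x-qd+1,x+qd-1]$ contains exactly the multiples $(m-q+1)d,\dots,(m+q)d$, of which precisely $q$ lie below $x$ and $q$ lie above $x$. Deleting these $2q$ vertices symmetrically leaves, on each side of $x$, an interval of $q(d-1)-1$ surviving non-neighbours, which $\phi$ sends to $[\phi(x)-q(d-1)+1,\phi(x)+q(d-1)-1]$, exactly the non-neighbourhood of $\phi(x)$ in $G_{k(d-1),q(d-1)}$. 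Thus $x\not\sim y$ among survivors iff $\phi(x)\not\sim\phi(y)$, giving the isomorphism and completing the dismantling retraction. I expect the symmetric count of deleted multiples in each non-neighbourhood interval — exactly $q$ on each side, independently of the residue $r$ — to be the main obstacle, since this is precisely what forces the non-neighbourhood sizes to match those of $G_{k(d-1),q(d-1)}$ and keeps each such set an interval centred at $\phi(x)$.
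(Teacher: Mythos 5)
Your proof is correct and follows essentially the same route as the paper's: both delete the set $\{i,\,i+qd,\,i+2qd,\dots,i+(k-1)qd\}$ (which, since $\gcd(k,q)=1$, is exactly the translate of the multiples of $d$) by successive folds $v_t\mapsto v_t-1$, and then identify the surviving induced subgraph with $G_{k(d-1),q(d-1)}$. The paper compresses both the legality of the folds and the final isomorphism into a one-line ``one can verify''; your explicit order-preserving compression $\phi$ and the count of exactly $q$ deleted multiples on each side of each survivor supply precisely the details it omits.
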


\begin{proof}
As in the proof Lemma~\ref{relpr}, we can map $(G_{kd,qd}-i) \to (G_{kq,qd}-\{i, i+dq \})$.  In fact this mapping is a fold.  
Continuing we can dismantle $G_{kq,dq}-i$ to $G_{kq,dq} - \{ i, i+dq, i+2dq, \dots, i+(k-1)dq \}$.  One can verify this latter graph 
is isomorphic to $G_{k(d-1),q(d-1)}$.
\end{proof}

\begin{lem}
\label{nonrelpr}
Let $k,q,d$ be positive integers such that $\gcd(k,q)=1$. If $G$ is $(kt,qt)$-flexible for all $t$, $1\leq t\leq d$, then 
$c(\mathscr{H}_{kd,qd}(G))=c(\mathscr{H}_{k,q}(G))$.
\end{lem}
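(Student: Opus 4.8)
My plan is to prove the two inequalities $c(\mathscr{H}_{kd,qd}(G))\ge c(\mathscr{H}_{k,q}(G))$ and $c(\mathscr{H}_{kd,qd}(G))\le c(\mathscr{H}_{k,q}(G))$ separately. The lower bound is immediate: by Proposition~\ref{ret} there is a retraction $r\colon G_{kd,qd}\to G_{k,q}$, so Corollary~\ref{retracts} (with $G$ retracting to itself via the identity) already gives $c(\mathscr{H}_{kd,qd}(G))\ge c(\mathscr{H}_{k,q}(G))$. More usefully, Lemma~\ref{retract} upgrades this to a retraction $R=\varphi^{1_G}_{r}\colon\mathscr{H}_{kd,qd}(G)\to\mathscr{H}_{k,q}(G)$ given by $R(x)=rx$, with section $S=\varphi^{1_G}_{s}$ given by $S(y)=sy$, where $s(c)=cd$ is a section of $r=\lfloor\cdot/d\rfloor$. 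Since a retraction is surjective, the induced map on components is surjective, and the entire lemma reduces to showing that this induced map is \emph{injective}.

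For the injectivity I would induct on $d$, the base case $d=1$ being trivial, so I may assume $c(\mathscr{H}_{k(d-1),q(d-1)}(G))=c(\mathscr{H}_{k,q}(G))$. The model is the proof of Lemma~\ref{relpr}. For a fixed colour $i$, the colourings avoiding $i$ induce a subgraph isomorphic to $\mathscr{H}_{G_{kd,qd}-i}(G)$, and by Proposition~\ref{d>1} together with Corollary~\ref{folds} this subgraph has exactly $c(\mathscr{H}_{k(d-1),q(d-1)}(G))$ components. Using $(kd,qd)$-flexibility, every colouring is $\times$-homotopic to a non-surjective one missing some colour $i$; and the gap-shifting step from Lemma~\ref{relpr}, recolouring each vertex of colour $c+qd$ to $c+qd-1$ whenever $c$ is unused, enlarges the set of missing colours without destroying old gaps. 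Iterating this shift clears the entire residue class $\{i,i+qd,i+2qd,\dots\}$, so the colouring lands in the induced copy $C_i\cong G_{k(d-1),q(d-1)}$ supported on the colours $\not\equiv i\pmod d$, where the induction hypothesis (and flexibility at level $d-1$) applies.

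The hard part is the residue bookkeeping. Because $\gcd(kd,qd)=d>1$, the shift by $qd$ preserves the residue of the missing colour modulo $d$; unlike the $\gcd=1$ situation of Lemma~\ref{relpr}, I cannot force a prescribed colour to become unused, and distinct colourings may reduce to distinct, evenly spaced copies of $G_{k,q}$ lying in different residue classes. These copies are \emph{not} joined by the naive ``shift-by-one'' map, since a tight edge (colour difference exactly $qd$) can be broken by such a shift, an obstruction of winding-number type; this is exactly where flexibility, rather than the mere existence of the retraction $R$, is essential. My plan to finish is to show that $R$ descends to a bijection on components, equivalently that every $x$ lies in the same component of $\mathscr{H}_{kd,qd}(G)$ as $SR(x)$, the colouring obtained by rounding each colour down to the nearest multiple of $d$. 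Indeed, once $x$ is straightened into the canonical copy $M=s(V(G_{k,q}))$ of multiples of $d$, the identity $R|_{M}=s^{-1}$ shows that two straightened colourings with $\times$-homotopic $R$-images are themselves $\times$-homotopic, so that if $R(x)$ and $R(x')$ lie in one component of $\mathscr{H}_{k,q}(G)$ then $x\sim SR(x)\sim SR(x')\sim x'$. To prove the straightening claim I expect to invoke flexibility at \emph{every} level $t\le d$: clearing one residue class reduces the problem to a genuine $(k(d-1),q(d-1))$-colouring inside $C_i$, to which the induction hypothesis applies, and iterating peels off one factor of the ratio at a time. Verifying that the copy-internal rounding produced by the induction hypothesis aligns (up to $\times$-homotopy) with the global multiples-of-$d$ copy $M$ is the step I expect to require the most care, and is the crux of the argument.
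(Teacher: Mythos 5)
Your scaffolding matches the paper's: the inequality $c(\mathscr{H}_{kd,qd}(G))\geq c(\mathscr{H}_{k,q}(G))$ via Proposition~\ref{ret} and Corollary~\ref{retracts}, induction on $d$, the use of flexibility plus the gap-shifting step, and the identification of the components of the reduced colour graph via Proposition~\ref{d>1} and Corollary~\ref{folds}. You have also correctly diagnosed the obstruction: the shift by $qd$ cannot move the missing colour out of its residue class, so different colourings may get funnelled into incomparable copies of $G_{k(d-1),q(d-1)}$. But your proposed resolution --- proving that every $x$ lies in the same component as $SR(x)$, the rounding-down to the canonical copy $M$ of multiples of $d$ --- is exactly the step you leave unproven (``the step I expect to require the most care''), and as you yourself observe, $x$ and $SR(x)$ need not even be adjacent because rounding can break a tight edge. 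Since that claim is essentially equivalent to the injectivity you are trying to establish, the argument as written is circular at its crux. This is a genuine gap, not a presentational one.

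The missing idea in the paper is an intermediate claim that sidesteps the alignment problem entirely: by inducting on $d$ and applying flexibility at \emph{every} level $t\leq d$ (clearing one residue class, restricting the codomain to $G_{kd,qd}-C^i\cong G_{k(d-1),q(d-1)}$, and invoking the inductive form of the claim there), one shows that every $(kd,qd)$-colouring is $\times$-homotopic to a colouring whose image has size strictly less than $k$. Since the residue class $C^0=\{0,qd,\dots,(k-1)qd\}$ has exactly $k$ elements, such a colouring must miss some $j\in C^0$, and the shift (which stays inside $C^0$) then carries that gap to the colour $0$ itself. Crucially, the target is only the set $S$ of colourings avoiding the \emph{single} colour $0$ --- not a whole residue class and not the canonical copy $M$ --- and $G_{kd,qd}-0$ already dismantles to $G_{k(d-1),q(d-1)}$ by Proposition~\ref{d>1}, so $c(X)=c(\mathscr{H}_{k,q}(G))$ by induction and every component of $\mathscr{H}_{kd,qd}(G)$ meets $X$. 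No alignment of different copies is ever needed. If you want to complete your write-up, replace the $x\sim SR(x)$ goal with this pigeonhole-on-$C^0$ argument.
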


\begin{proof}
First, by Proposition~\ref{ret} we have that $G_{kd,qd}$ retracts to $G_{k,q}$ and so $c(\mathscr{H}_{kd,qd}(G)) \geq c(\mathscr{H}_{k,q}(G))$ by Corollary~\ref{retracts}. For the reverse inequality, we will apply induction on $d$ where the case for $d=1$ is trivial. Now suppose $d\geq2$ and let $S$ be the set of all $(kd,qd)$-colourings of $G$ which do not map to the colour $0$. It is clear that the subgraph $X$ of $\mathscr{H}_{kd,qd}(G)$ induced by $S$ is isomorphic to $\mathscr{H}_{G_{kd,qd}-0}(G)$. By Proposition~\ref{d>1} and Corollary~\ref{folds}, 
$c(X) = c(\mathscr{H}_{k(d-1),q(d-1)}(G))$. By induction, $c(X)=c(\mathscr{H}_{k,q}(G))$. Therefore, in order to show that 
$c(\mathscr{H}_{kd,qd}(G)) \leq c(\mathscr{H}_{k,q}(G))$ it will be enough to show that every $(kd,qd)$-colouring of $G$ is $\times$-homotopic to an element of $S$. Let us first prove the following:

\begin{claim}
\label{<k}
Every $(kd,qd)$-colouring $f$ of $G$ is $\times$-homotopic to a $(kd,qd)$-colouring $g$ such that $|g(V(G))|<k$. 
\end{claim}

The proof of the claim is also by induction on $d$. The case for $d=1$ simply follows by the fact that $G$ is $(k,q)$-flexible. Now suppose $d\geq2$ and let $f:G\to G_{kd,qd}$ be arbitrary. Since $G$ is $(kd,qd)$-flexible, $f$ is $\times$-homotopic to a $(kd,qd)$-colouring $g$ which is non-surjective. Let $i$ be a colour which is not mapped to by $g$. As above, we construct a colouring $f_{d-1}$ which does not use colours from 
$C^i=\{i,i+qd,\dots, i+(k-1)qd\}$. 

As in Proposition~\ref{d>1}, the graph $H_{d-1}=G_{kd,qd}-C^i$ is isomorphic to $G_{k(d-1),q(d-1)}$. Let $f'$ be the homomorphism obtained by restricting the codomain of $f_{d-1}$ to $H_{d-1}$. By the inductive hypothesis $f'$ is $\times$-homotopic to a $H_{d-1}$-colouring $g'$ such that $|g'(V(G))|<k$. Let $\iota:H_{d-1}\to G_{kd,qd}$ be the inclusion map. It is straightforward to show that $\iota f'$ is $\times$-homotopic to $\iota g'$. Finally, we have $\iota f'=f_{d-1}$ and $|\iota g'(V(G))|<k$. This concludes the proof of Claim~\ref{<k}.

To finish the proof of the lemma, let $f:G\to G_{kd,qd}$ be arbitrary and let $f$ be $\times$-homotopic to $g$ where $|g(V(G))|<k$. Then, in particular, there is some $j\in C^0=\{0,qd,\dots,(k-1)qd\}$ such that $g$ does not map to $j$. As above we construct a sequence of colourings,
each $\times$-homotopic to $g$ that do not use colours $j+qd, j+2qd, j+3qd, \dots$.  Eventually, we will reach a $(kd,qd)$-colouring $h$ which does not use $0$, i.e. $h\in S$. The result follows.
\end{proof}

Consider the following straightforward facts about lower parents.

\begin{prop}
\label{q<=b}
Suppose $k$ and $q$ are relatively prime positive integers, and let $(k',q')$ be the lower
parent of $(k,q)$.  If $k/q > j/p$ for some positive integers $j$ and $p$, then
$$
\frac{k'}{q'} \geq \frac{j}{p} + \frac{q'-p}{pqq'}.
$$
In particular $k'/q' \geq j/p$ if $q'=p$ and $k'/q' > j/p$ if $q' > p$.
\end{prop}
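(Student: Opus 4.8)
The plan is to reduce the entire statement to the single integer inequality $kp-jq\ge 1$, which follows at once from the hypothesis $k/q>j/p$. The only structural fact I need about the lower parent is its defining relation from Definition~\ref{defn:lowerparent}, namely $kq'-k'q=1$; rewriting this as $qk'=kq'-1$ lets me eliminate $k'$ in favour of $k$, $q$, and $q'$.

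First I would clear denominators in the target inequality $\frac{k'}{q'}\ge\frac{j}{p}+\frac{q'-p}{pqq'}$. Multiplying through by the strictly positive quantity $pqq'$ (so the direction of the inequality is preserved) yields the equivalent statement $k'pq\ge jqq'+(q'-p)$, that is, $q\bigl(k'p-jq'\bigr)\ge q'-p$. Substituting $qk'=kq'-1$ turns the left-hand side into $kq'p-p-jqq'$, so after cancelling the $-p$ appearing on both sides the inequality becomes $q'(kp-jq)\ge q'$. Dividing by $q'>0$ reduces everything to $kp-jq\ge 1$.

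It then remains only to check this last inequality. The hypothesis $k/q>j/p$ with $p,q>0$ is equivalent to $kp>jq$; since $k$, $p$, $j$, $q$ are integers, $kp-jq$ is a positive integer and hence at least $1$. Reversing the chain of equivalences recovers the displayed bound. For the two ``in particular'' assertions I would simply inspect the sign of the correction term $\frac{q'-p}{pqq'}$: it vanishes when $q'=p$, giving $k'/q'\ge j/p$, and is strictly positive when $q'>p$, giving $k'/q'>j/p$.

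I do not expect a genuine obstacle here, as every step is reversible elementary algebra. The two points that merit a line of care are that each denominator-clearing multiplication is by a strictly positive quantity (so the inequality direction is preserved), and that it is precisely the integrality of $k,q,j,p$ that upgrades the strict rational inequality $kp>jq$ into the integer gap $kp-jq\ge 1$ on which the whole argument hinges.
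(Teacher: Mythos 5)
Your proof is correct and is essentially the paper's argument in reverse: both rest on exactly the two facts you isolate, namely the integrality upgrade $kp>jq\Rightarrow kp-jq\geq 1$ and the lower parent relation $kq'-k'q=1$ (which the paper uses in the form $k'/q'=k/q-1/(qq')$). The paper simply runs the chain of inequalities forward from $k/q>j/p$ rather than clearing denominators in the target, so there is no substantive difference.
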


\begin{proof}
Observe,
\begin{eqnarray*}
k/q & > & j/p \\
kp/q & > & j \\
kp/q & \geq & j + 1/q \\
k/q & \geq & j/p + 1/(pq) \\
k'/q' = k/q - 1/(qq') & \geq & j/p + 1/(pq) - 1/(qq') = j/p + (q'-p)/(pqq')
\end{eqnarray*}
\end{proof}
%
%
%

Next, we use Lemmas~\ref{relpr} and~\ref{nonrelpr} to prove some general upper bounds on $\mathfrak{M}_c(G)$. Note that the following result improves on Theorem~\ref{col(G)Bound} for the case that $G$ has a $\Delta$-regular component.

\begin{lem}
\label{reduceDelta}
Let $G$ be a graph with at least one edge and $\frac{k}{q} > 2 \Delta(G)$. Then $G$ is $(kd,qd)$-flexible for all $d\geq1$. 
\end{lem}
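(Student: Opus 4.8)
The plan is to prove flexibility directly, with no induction on $d$, by exhibiting for an arbitrary surjective $(kd,qd)$-colouring an explicit sequence of single-vertex recolourings that frees up one colour. Writing $N=kd$ and $Q=qd$, the crucial observation is that the ratio is unchanged, $N/Q=k/q>2\Delta(G)$, so that $N>2Q\Delta(G)$ for every $d\geq1$; this is what makes a single argument work uniformly in $d$. Since $G$ is loop-free, Proposition~\ref{colourHom} lets me work in the colour graph $\mathscr{C}_{N,Q}(G)$ in place of $\mathscr{H}_{N,Q}(G)$, so it suffices to connect any surjective colouring to a non-surjective one by changing one vertex at a time.

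So let $f:G\to G_{N,Q}$ be a surjective $(N,Q)$-colouring (if $f$ is already non-surjective there is nothing to do), and aim to free the colour $0$. The key quantitative input is that, by~\eqref{eqn:availcirc}, the set of colours available to recolour a vertex $w$ has size at least
\[ N-(2Q-1)d(w)\ \geq\ N-(2Q-1)\Delta(G)\ >\ 2Q\Delta(G)-(2Q-1)\Delta(G)\ =\ \Delta(G)\ \geq\ 1, \]
using $N>2Q\Delta(G)$ and $\Delta(G)\geq1$ (here is where the hypothesis that $G$ has an edge enters). Being an integer, this count is at least $\Delta(G)+1\geq2$, so every vertex has at least two available colours, one of which is its current colour; hence it can always be recoloured to some \emph{other} colour.

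Now let $w_1,\dots,w_m$ be the vertices with $f(w_i)=0$. Since $0$ is a non-neighbour of itself in $G_{N,Q}$ (as $|0-0|<Q$), this set is independent, and in particular no neighbour of any $w_i$ is coloured $0$. I would recolour the $w_i$ one at a time: when it is $w_j$'s turn, none of its neighbours has been touched (none lies in $\{w_1,\dots,w_{j-1}\}$, as those are colour-$0$ vertices), so $w_j$'s available set is exactly the one computed under $f$; I pick any available colour $c_j\neq0$ and recolour $w_j$ to $c_j$. Each step changes a single vertex and yields a proper colouring, so it is an edge of $\mathscr{C}_{N,Q}(G)$, and since distinct $w_i$ are non-adjacent the choices never conflict. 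After all $m$ steps no vertex is coloured $0$, giving a non-surjective colouring $\times$-homotopic to $f$.

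The argument is essentially routine once set up; the only point requiring care is the bookkeeping that recolouring one colour-$0$ vertex does not disturb the available set of another, which is precisely why I first record that the colour-$0$ class is independent and that its vertices' neighbours are never recoloured. The one genuine inequality, that strictly more than $\Delta(G)$ colours remain available, is exactly where the hypothesis $k/q>2\Delta(G)$ (equivalently $N>2Q\Delta(G)$) is used, and it holds for every $d$ simultaneously because the ratio $N/Q$ is independent of $d$.
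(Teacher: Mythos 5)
Your proof is correct and follows essentially the same route as the paper: both rest on the observation that $kd/(qd)=k/q>2\Delta(G)$ gives every vertex at least two available colours via Eq.~\eqref{eqn:availcirc}, and then empty the colour class $f^{-1}(\{0\})$. The only (immaterial) difference is that you recolour the independent set $f^{-1}(\{0\})$ one vertex at a time as a path in $\mathscr{C}_{kd,qd}(G)$, whereas the paper recolours it all at once as a single edge of $\mathscr{H}_{kd,qd}(G)$.
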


\begin{proof}
Let $f:G\to G_{kd,qd}$ be arbitrary. Notice that $kd \geq 2dq\Delta(G)+1\geq (2dq-1)\Delta(G)+2$ since $G$ contains an edge. 
By Eq.~\eqref{eqn:availcirc} (of Section~\ref{degen}), for each $v\in V(G)$ there are at least two available colours for $v$.  Therefore, we may perform 
a local modification which recolours each vertex of $f^{-1}(\{0\})$ with a colour different from $0$. The result is a non-surjective 
$(kd,qd)$-colouring $g$ which is adjacent to $f$ in $\mathscr{H}_{k,q}(G)$. 
\end{proof}

\begin{thm}
\label{2Delta}
If $G$ is a graph with at least one edge, then $\mathfrak{M}_c(G)\leq2\Delta(G)$.
\end{thm}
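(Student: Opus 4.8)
The plan is to reduce any $(k,q)$-mixing question with $k/q>2\Delta(G)$ to a \emph{classical} $k'$-mixing question on a complete graph, by walking down the sequence of lower parents while keeping the number of components of the homomorphism graph from increasing. Write $\Delta=\Delta(G)$, and throughout use Proposition~\ref{colourHom} to identify ``$G$ is $(k,q)$-mixing'' with ``$c(\mathscr{H}_{k,q}(G))=1$''. It suffices to show that $G$ is $(k,q)$-mixing whenever $k/q>2\Delta$, since then every rational $r>2\Delta$ is admissible in the infimum defining $\mathfrak{M}_c(G)$ (and $\chi_c(G)\le\chi(G)\le\Delta+1\le2\Delta$, so the threshold is well defined).

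First I would dispose of the common factor. Given $k/q>2\Delta$, set $d=\gcd(k,q)$ and write $(k,q)=(k_0d,q_0d)$ with $\gcd(k_0,q_0)=1$. Since $k_0/q_0=k/q>2\Delta$, Lemma~\ref{reduceDelta} shows that $G$ is $(k_0t,q_0t)$-flexible for every $t\ge1$, so Lemma~\ref{nonrelpr} gives $c(\mathscr{H}_{k,q}(G))=c(\mathscr{H}_{k_0,q_0}(G))$. Hence it is enough to treat the coprime case $k/q>2\Delta$ with $\gcd(k,q)=1$.

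Next I would iterate the lower-parent construction. Form the chain $(k,q)=(k_0,q_0),(k_1,q_1),\dots,(k_m,q_m)$ in which each pair is the lower parent of its predecessor and $q_m=1$; the denominators strictly decrease, $q_0>q_1>\cdots>q_m=1$, so the chain terminates. The crucial point is that flexibility must hold at every link. Applying Proposition~\ref{q<=b} with $j/p=2\Delta/1$, the lower parent of a pair of ratio $>2\Delta$ again has ratio $>2\Delta$ as long as its denominator exceeds $1$; inducting along the chain, every $(k_i,q_i)$ with $i<m$ has $q_i>1$ and thus $k_i/q_i>2\Delta$. Consequently Lemma~\ref{reduceDelta} makes $G$ $(k_i,q_i)$-flexible for each $i<m$, and Lemma~\ref{relpr} yields $c(\mathscr{H}_{k_i,q_i}(G))\le c(\mathscr{H}_{k_{i+1},q_{i+1}}(G))$. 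Telescoping gives $c(\mathscr{H}_{k,q}(G))\le c(\mathscr{H}_{k_m,1}(G))$, and Proposition~\ref{q<=b} applied to the final link (where $q_m=1$) shows $k_m\ge2\Delta$.

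It remains to handle the base $(k_m,1)$, i.e. the complete graph $K_{k_m}$, and here lies the only genuine obstacle: $c(\mathscr{H}_{k_m,1}(G))=c(\mathscr{C}_{k_m}(G))$ equals $1$ precisely when $G$ is $k_m$-mixing. For $\Delta\ge2$ this is immediate, since $k_m\ge2\Delta\ge\Delta+2\ge col(G)+1\ge\mathfrak{M}(G)$ by Theorem~\ref{col}, so $G$ is $k_m$-mixing and the whole chain collapses to a single component, giving $\mathfrak{M}_c(G)\le2\Delta$. For $\Delta=1$, however, the chain can bottom out at $K_2$, which is \emph{not} $2$-mixing, so the component inequality is uninformative and a direct argument is needed: such a $G$ is a disjoint union of edges and isolated vertices, mixing of a disjoint union is componentwise, each isolated vertex mixes trivially, and each $K_2$ is $(k,q)$-mixing exactly when $k/q>2$ (as in Proposition~\ref{K_mmtrees}), whence $\mathfrak{M}_c(G)=2=2\Delta$. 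The most delicate part of the proof is precisely the verification, via Proposition~\ref{q<=b}, that the ratios never fall to or below $2\Delta$ until the denominator reaches $1$, as this is exactly what secures flexibility—and hence Lemma~\ref{relpr}—at every link of the chain.
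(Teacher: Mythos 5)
Your proof is correct and follows essentially the same route as the paper's: dispose of the common factor via Lemma~\ref{nonrelpr} and Lemma~\ref{reduceDelta}, descend the lower-parent chain using Proposition~\ref{q<=b}, Lemma~\ref{reduceDelta}, and Lemma~\ref{relpr} (the paper phrases this as induction on $q$), and land at $q=1$ where Theorem~\ref{col} applies. The only cosmetic difference is the degenerate case you split off: the paper diverts all graphs with $col(G)=2$ (forests) to Proposition~\ref{K_mmtrees}, whereas you only need to treat $\Delta=1$ separately since your inequality $k_m\geq 2\Delta\geq\Delta+2\geq col(G)+1$ already covers $\Delta\geq 2$; both variants are valid.
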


\begin{proof}
Suppose that $G$ contains an edge and let $\frac{k}{q}>2\Delta(G)$ where $\gcd(k,q)=1$. If $col(G)=2$,
then $G$ is a forest and $G$ is $(k,q)$-mixing by Propotion~\ref{K_mmtrees}. So, we may suppose $col(G)\geq3$.

We show by induction on $q$ that $G$ is $(k,q)$-mixing, from which it follows that $G$ is $(kd,qd)$-mixing for all $d\geq1$ 
by Lemmas~\ref{nonrelpr} and~\ref{reduceDelta}. 
In the base case $q=1$, recall that $col(G)\geq3$ and so we have $k> 2\Delta(G)\geq 2(col(G)-1) \geq col(G)+1$. 
Therefore $G$ is $k$-mixing by Theorem~\ref{col}. Now suppose $q\geq2$ and let $(k',q')$ be the lower
parent of $(k,q)$. We will be done by Lemmas~\ref{relpr} and~\ref{reduceDelta} if we can show that $G$ is $(k',q')$-mixing.

Suppose that $q'=1$. In this case, we have that $\frac{k}{q}>2\Delta(G)$ implies $k'\geq 2\Delta(G)\geq 2(col(G)-1) \geq col(G)+1$ 
by Proposition~\ref{q<=b} and the fact that $col(G)\geq3$. (Let $j = 2\Delta(G)$ and $p=1$.) 
Therefore $G$ is $(k',q')$-mixing by Theorem~\ref{col}. On the other hand, 
if $q'>1$, then $q' > p=1$. 
Hence $\frac{k}{q}>2\Delta(G)$ implies $\frac{k'}{q'}> 2\Delta(G)$ by Proposition~\ref{q<=b}. 
By definition of lower parent $q' \leq q$.  In this case, equality is impossible, i.e. $q' < q$, 
otherwise $q'$ divides $kq'-k'q$ contradicting $kq'-k'q=1$.
Thus, $G$ is $(k',q')$-mixing by the inductive hypothesis. The result follows. 
\end{proof}

We now bound the circular mixing threshold in terms of the mixing threshold. 
Given the fact that $\chi_c\leq \chi$, one might expect that $\mathfrak{M}_c\leq \mathfrak{M}$ in general.  However the situation is more complicated. 
In~\cite{BrewsterNoel} we provide examples where the circular
mixing threshold exceeds the mixing threshold.  For example, by Theorem~4 in~\cite{BrewsterNoel} there is a homomorphism
$\varphi: G_{19,7} \to G_{19,2}$ that is frozen which implies $\mathfrak{M}_c(G_{19,7}) \geq 19/2$.  However, 
$col(G_{19,7}) + 1 = 7 \geq \mathfrak{M}(G_{19,7})$.  Thus, our bound requires a second term, namely $(|V|+1)/2$.

\begin{lem}
\label{v+1/2flex}
Let $G$ be a graph and let $\frac{k}{q}>\max\left\{\mathfrak{M}(G),\frac{|V(G)|+1}{2}\right\}$.
Then $G$ is $(kd,qd)$-flexible for all $d\geq1$. 
\end{lem}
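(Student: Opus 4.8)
The plan is to reduce the statement, via a crude size count, to a single genuinely nontrivial case. First I would dispose of the easy situation: if $kd > |V(G)|$, then any map $V(G)\to\{0,1,\dots,kd-1\}$ necessarily misses a colour, so \emph{every} $(kd,qd)$-colouring of $G$ is already non-surjective and is $\times$-homotopic to itself. Hence $G$ is trivially $(kd,qd)$-flexible in that regime, and no use of the hypotheses is needed.

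It therefore remains to treat $kd \leq |V(G)|$, and here I would extract the numerical content of the bound $\frac{k}{q} > \frac{|V(G)|+1}{2}$. Since $|V(G)| \geq kd$, this gives $\frac{k}{q} > \frac{kd+1}{2}$, so $\frac{2k}{q} > kd+1 > kd$, and dividing by $k>0$ yields $qd < 2$. As $q,d\geq 1$, this forces $q=d=1$. In other words, the $\frac{|V(G)|+1}{2}$ term is engineered precisely so that whenever a colouring is \emph{not} automatically non-surjective, the pair $(kd,qd)$ collapses to ordinary $k$-colouring.

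In this surviving case $(kd,qd)=(k,1)$ and $G_{k,1}=K_k$, so I would argue purely about classical $k$-colourings. The hypothesis $\frac{k}{q} > \mathfrak{M}(G)$ with $q=1$ gives $k > \mathfrak{M}(G)$, so by the definition of the mixing threshold $G$ is $k$-mixing, i.e.\ $\mathscr{C}_k(G)$ is connected. Because $G$ is loop-free, Proposition~\ref{colourHom} identifies the components of $\mathscr{C}_k(G)$ with those of $\mathscr{H}_{K_k}(G)$, so all $k$-colourings of $G$ lie in one $\times$-homotopy class. Finally, since $\mathfrak{M}(G)\geq\chi(G)$ we have $k>\chi(G)$, whence an optimal proper colouring uses only $\chi(G)<k$ colours and is non-surjective. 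Every $k$-colouring is then $\times$-homotopic to this non-surjective colouring, establishing $(k,1)$-flexibility and finishing the proof.

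The main obstacle is the bookkeeping in the middle step rather than any analytic difficulty: one must recognize that the two hypotheses play complementary roles, with $\frac{|V(G)|+1}{2}$ eliminating all circular cases ($q$ or $d$ exceeding $1$) and $\mathfrak{M}(G)$ then handling the single classical case. I would be careful to keep the inequalities strict so that $qd<2$ (and not merely $qd\leq 2$) is obtained, since that strictness is exactly what pins down $q=d=1$.
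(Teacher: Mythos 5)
Your proof is correct and follows essentially the same route as the paper's: the paper splits on $qd>1$ (where $\frac{kd}{qd}>\frac{|V(G)|+1}{2}$ forces $kd>|V(G)|+1$, so every colouring is non-surjective) versus $q=d=1$ (where $k>\mathfrak{M}(G)$ gives $k$-mixing and a non-surjective $\chi(G)$-colouring), and your case split on $kd>|V(G)|$ versus $kd\leq|V(G)|$ is just the contrapositive packaging of the same two cases. The arithmetic pinning down $qd<2$ and the use of Proposition~\ref{colourHom} to pass between $\mathscr{C}_k(G)$ and $\times$-homotopy are both sound.
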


\begin{proof}
If $qd>1$, then we have $kd>|V(G)|+1$ since $\frac{kd}{qd}>\frac{|V(G)|+1}{2}$. In this case, no $(kd,qd)$-colouring
can be surjective, and so we are done. Now, if $q=1$ and $d=1$ we have that $k>\mathfrak{M}(G)\geq\chi(G)$ and so there
is a non-surjective $k$-colouring of $G$. Moreover, $G$ is $k$-mixing and so every $k$-colouring is $\times$-homotopic
to the non-surjective colouring.
\end{proof}

\begin{thm}
\label{v+1/2}
For a graph $G$,
\[\mathfrak{M}_c(G)\leq\max\left\{\frac{|V(G)|+1}{2},\mathfrak{M}(G)\right\}.\]
\end{thm}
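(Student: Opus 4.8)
The plan is to show that $G$ is $(k,q)$-mixing for every rational $k/q \geq \max\left\{\frac{|V(G)|+1}{2}, \mathfrak{M}(G)\right\}$, which is precisely what the definition of $\mathfrak{M}_c$ requires. Write $M := \max\left\{\frac{|V(G)|+1}{2}, \mathfrak{M}(G)\right\}$. The natural first reduction is to handle the strict inequality $k/q > M$ first and deal with the boundary case $k/q = M$ afterward (if it is rational and achievable); since $\mathfrak{M}_c$ is defined as an infimum, it in fact suffices to show $(k,q)$-mixing for all $k/q > M$, so I would concentrate on that.

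First I would reduce to the relatively prime case. Write $k/q$ in lowest terms as $k_0/q_0$ with $\gcd(k_0,q_0)=1$ and $k/q = k_0 d / q_0 d$ for some $d \geq 1$. The key engine is Lemma~\ref{nonrelpr}: if $G$ is $(k_0 t, q_0 t)$-flexible for all $1 \leq t \leq d$, then $c(\mathscr{H}_{k_0 d, q_0 d}(G)) = c(\mathscr{H}_{k_0, q_0}(G))$, so $G$ is $(k_0 d, q_0 d)$-mixing if and only if it is $(k_0, q_0)$-mixing. The flexibility hypothesis needed here is supplied directly by Lemma~\ref{v+1/2flex}: since $k_0 t / q_0 t = k_0/q_0 > M \geq \max\{\mathfrak{M}(G), (|V(G)|+1)/2\}$, that lemma gives $(k_0 t, q_0 t)$-flexibility for every $t \geq 1$. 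So it remains only to prove that $G$ is $(k_0, q_0)$-mixing for the coprime pair.

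For the coprime pair I would induct on $q_0$. The base case $q_0 = 1$ is immediate: then $k_0/q_0 = k_0 > \mathfrak{M}(G)$, so $G$ is $k_0$-mixing by the very definition of the mixing threshold $\mathfrak{M}(G)$. For the inductive step with $q_0 \geq 2$, let $(k',q')$ be the lower parent of $(k_0,q_0)$. Lemma~\ref{relpr}, combined with the $(k_0,q_0)$-flexibility of $G$ (again from Lemma~\ref{v+1/2flex}), gives $c(\mathscr{H}_{k_0,q_0}(G)) \leq c(\mathscr{H}_{k',q'}(G))$, so it suffices to show $G$ is $(k',q')$-mixing. Here I must check that the numerical hypothesis survives the passage to the lower parent: I would verify $k'/q' > M$ using Proposition~\ref{q<=b} with $j/p$ chosen as a rational just below $M$ (for instance taking $p$ to be the denominator appearing in $M$, noting $(|V(G)|+1)/2$ has denominator at most $2$ and $\mathfrak{M}(G)$ is an integer). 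Since $q' < q_0$ strictly (equality would force $q' \mid k q' - k' q = 1$, a contradiction, exactly as in the proof of Theorem~\ref{2Delta}), the inductive hypothesis on the smaller denominator $q'$ then applies and delivers $(k',q')$-mixing.

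The main obstacle I anticipate is the bookkeeping in the inductive step: I must ensure that the inequality $k'/q' > M$ genuinely holds after taking the lower parent, and Proposition~\ref{q<=b} only guarantees a clean strict inequality when $q' > p$. The delicate sub-case is therefore when the lower parent has a small denominator relative to the threshold $M$ — in particular when $q'=1$, forcing me to compare an integer $k'$ against $M$ directly and invoke $\mathfrak{M}(G)$ rather than the inductive hypothesis. I would handle this by splitting on whether $q' = 1$ (apply the base-case argument via $\mathfrak{M}(G)$) or $q' \geq 2$ (apply induction), mirroring the case analysis already carried out in the proof of Theorem~\ref{2Delta}; the two-term maximum defining $M$ is precisely what makes both branches go through.
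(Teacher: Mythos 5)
Your overall architecture is exactly the paper's: reduce to the coprime case via Lemma~\ref{nonrelpr} and Lemma~\ref{v+1/2flex}, then induct on $q$ using lower parents together with Lemma~\ref{relpr}, splitting on the denominator $q'$ of the lower parent. However, there is a genuine gap in your inductive step, and you have misidentified where the delicate sub-case lies. The case $q'=1$ is in fact unproblematic: Proposition~\ref{q<=b} with $j=\mathfrak{M}(G)$, $p=1$ gives $k'\geq\mathfrak{M}(G)$, and that weak inequality already yields $k'$-mixing by the definition of $\mathfrak{M}$. The real trouble is $q'=2$ when $|V(G)|$ is even, so that the term $\frac{|V(G)|+1}{2}$ has denominator $p=2=q'$. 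There Proposition~\ref{q<=b} only gives $\frac{k'}{2}\geq\frac{|V(G)|+1}{2}$, not the strict inequality $\frac{k'}{q'}>M$ that your inductive hypothesis requires; equality $k'=|V(G)|+1$ is possible, and then ``apply induction'' simply does not go through. Your closing claim that the two-term maximum ``is precisely what makes both branches go through'' is therefore false for this branch.

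The paper closes this gap with a dedicated argument for $q'=2$ that your sketch is missing: from $k'\geq|V(G)|+1$ one concludes that no $(k',2)$-colouring of $G$ can be vertex-surjective, hence $G$ is $(k',2)$-flexible for free; one then descends one further level to the lower parent $(k'',1)$ of $(k',2)$, where $k''=\frac{k'-1}{2}\geq\mathfrak{M}(G)$ by Proposition~\ref{q<=b}, so $G$ is $k''$-mixing, and Lemma~\ref{relpr} lifts this back to $(k',2)$-mixing. In short, you need a three-way split ($q'=1$, $q'=2$, $q'\geq3$), with the middle case handled by a non-surjectivity argument plus a second application of the lower-parent machinery rather than by the inductive hypothesis. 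With that case added, your proof matches the paper's.
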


\begin{proof}
Suppose $\frac{k}{q}>\max\left\{\mathfrak{M}(G),\frac{|V(G)|+1}{2}\right\}$. 
First we show that $G$ is $(k,q)$-mixing by induction on $q$.
It will follow that $G$ is $(kd,qd)$-mixing for all $d\geq1$ by Lemmas~\ref{nonrelpr} and~\ref{v+1/2flex}. 
The case for $q=1$ is trivial as in this case we have $k>\mathfrak{M}(G)$.  Now, suppose that $q\geq2$ and let $(k',q')$ be the lower parent of 
$(k,q)$. We will be done by Lemmas~\ref{relpr} and~\ref{v+1/2flex} if we can show that $G$ is $(k',q')$-mixing.

\begin{case}
Suppose that $q'=1$. 
\end{case}

Then $\frac{k}{q}>\mathfrak{M}(G)$ implies $k'\geq \mathfrak{M}(G)$ by Proposition~\ref{q<=b}. It follows that $G$ is $k'$-mixing. 

\begin{case}
Suppose that $q'=2$. 
\end{case}

Then $\frac{k}{q}>\mathfrak{M}(G)$ implies $\frac{k'}{2}> \mathfrak{M}(G)$ by Proposition~\ref{q<=b}. Also, by Proposition~\ref{q<=b} we have that $\frac{k}{q}>\frac{|V(G)|+1}{2}$ implies $\frac{k'}{2}\geq\frac{|V(G)|+1}{2}$. Therefore $G$ is $(k',2)$-flexible since no $(k',2)$-colouring of $G$ can be surjective. Now, let $k''=\frac{k'-1}{2}$. It is easy to check that $(k'',1)$ is the lower parent of $(k',2)$ and $k''\geq\mathfrak{M}(G)$ by Proposition~\ref{q<=b}. Therefore $G$ is $k''$-mixing, which implies that $G$ is $(k',2)$-mixing by Lemma~\ref{relpr}. 

\begin{case}
Suppose that $q'\geq3$. 
\end{case}

\addtocounter{case}{-3}

In this case, we have $\frac{k'}{q'}>\max\left\{\frac{|V(G)|+1}{2},\mathfrak{M}(G)\right\}$ by Proposition~\ref{q<=b}. Therefore $G$ is $(k',q')$-mixing by the inductive hypothesis. The result follows. 
\end{proof}

We obtain the following corollary.

\begin{cor}
\label{2}
If $|V(G)|\leq 2\mathfrak{M}(G)-1$, then $\mathfrak{M}_c(G)\leq\mathfrak{M}(G)$. 
\end{cor}

\section{A lower bound}
\label{lb}
In this section we establish a lower bound on $\mathfrak{m}_c$ for non-bipartite graphs. Specifically, we prove the following bound based on the clique number.

\begin{thm}
\label{omega}
If $G$ is a non-bipartite graph, then $\mathfrak{m}_c(G)\geq\max\{4,\omega(G)+1\}$.
\end{thm}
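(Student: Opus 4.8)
The plan is to prove two separate lower bounds, $\mathfrak{m}_c(G)\ge 4$ and $\mathfrak{m}_c(G)\ge\omega(G)+1$, each by exhibiting a quantity that is constant on the components of $\mathscr{C}_{k,q}(G)$ and that takes two different values on a pair of colourings related by the reflection $x\mapsto -x$ of $G_{k,q}$. Since a non-bipartite graph always contains an odd cycle, and $\omega(G)+1\ge 4$ exactly when $\omega(G)\ge 3$, these two bounds together yield $\max\{4,\omega(G)+1\}$. Throughout I fix a pair $(k,q)$ with $\chi_c(G)\le k/q$ lying below the relevant threshold and show $G$ is not $(k,q)$-mixing; the bound on $\mathfrak{m}_c$ then follows from the definition of the infimum.

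For the bound $\mathfrak{m}_c(G)\ge 4$, fix an odd cycle $C=v_0v_1\cdots v_{m-1}$ in $G$ (with $m$ odd) and, for a $(k,q)$-colouring $f$ of $G$, define the forward winding number
\[
W(f):=\frac{1}{k}\sum_{i=0}^{m-1}\big((f(v_{i+1})-f(v_i))\bmod k\big),
\]
where each summand is taken in $\{q,\dots,k-q\}$ and indices are read mod $m$; this is an integer. First I would show $W$ is constant on components of $\mathscr{C}_{k,q}(G)$: recolouring a vertex off $C$ leaves $f|_C$ unchanged, while recolouring a cycle vertex $v_j$ with cycle-neighbours coloured $a,b$ affects only the two summands at $v_j$, whose sum is congruent to $(b-a)\bmod k$ and lies in the range $\{2q,\dots,2(k-q)\}$. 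The key point is that when $k<4q$ the two arcs of $\mathbb{Z}_k$ between $a$ and $b$ cannot both have length $\ge 2q$, so every colour admissible at $v_j$ (distance $\ge q$ from both $a$ and $b$) lies in the same arc as the old colour; hence this two-summand sum never jumps by $k$, and $W$ is unchanged. Next, reflection negates each forward step modulo $k$, sending $W(f)$ to $m-W(f)$; since $m$ is odd, $W(f)\ne m-W(f)$, so $f$ and its reflection lie in different components and $G$ is not $(k,q)$-mixing whenever $k/q<4$.

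For the bound $\mathfrak{m}_c(G)\ge\omega(G)+1$ (relevant when $\omega:=\omega(G)\ge 3$), fix a clique $K_\omega\subseteq G$. Given a $(k,q)$-colouring, its $\omega$ colours on the clique are pairwise at circular distance $\ge q$, so they partition $\mathbb{Z}_k$ into $\omega$ gaps, each $\ge q$; when $k/q<\omega+1$ no gap can be $\ge 2q$. The invariant here is the \emph{cyclic order} in which the clique vertices appear around $\mathbb{Z}_k$. I would argue this cyclic order is preserved by any single recolouring: recolouring a clique vertex $v$ merges its two adjacent gaps (of total size $<4q$) into one arc between the same two neighbouring colours, and the new colour must lie in this arc at distance $\ge q$ from its endpoints, hence stays between the same two neighbours; recolouring a non-clique vertex does not move the clique colours at all. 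Finally, the reflection reverses the cyclic order, and for $\omega\ge 3$ a cyclic arrangement of distinct elements never equals its reverse, so the reflected colouring lies in a different component. Thus $G$ is not $(k,q)$-mixing for $k/q<\omega+1$.

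The main obstacle is the invariance step in each part, namely showing that a single legal recolouring cannot change $W$ (respectively the cyclic order). Both reduce to the same geometric fact: under the hypothesis $k<4q$ (respectively $k<(\omega+1)q$), the admissible colours for the vertex being recoloured are confined to a single arc of $\mathbb{Z}_k$ determined by the colours of its neighbours, which prevents any wraparound. I would isolate this as a lemma about admissible recolourings on a circle. A minor point to verify carefully is that $x\mapsto -x$ is indeed an automorphism of $G_{k,q}$ (so it produces a genuine $(k,q)$-colouring) and that the reflected colouring necessarily differs in invariant value, which is automatic from $W(f)\ne m-W(f)$ and from the reversal of the cyclic order.
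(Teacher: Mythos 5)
Your proposal is correct and follows essentially the same route as the paper: your ``single-arc'' lemma about admissible recolourings is precisely the paper's notion of a \emph{constricting} colouring (Lemmas~\ref{<4} and~\ref{<m+1}), your winding number $W$ is the paper's invariant $\sigma/k$ (Propositions~\ref{samesigma} and~\ref{kdividessigma}), and the reflection $x\mapsto -x$ is exactly the paper's distinguishing map (Proposition~\ref{sigma+sigma}). The only cosmetic difference is that for the clique case you track the cyclic order of the clique colours around $\mathbb{Z}_k$, whereas the paper reuses $\sigma$ on a Hamilton cycle of the clique labelled in increasing colour order; the two invariants are interchangeable here.
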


By Definition~\ref{defn:mt}, $\mathfrak{m}_c \geq \chi_c > \chi -1$.  If $\chi > \omega + 1$, then $\chi-1 \geq \omega +1$ and 
$\mathfrak{m}_c > \omega + 1$.  Thus, this result is of interest for graphs with $\chi \leq \omega + 1$ which includes odd cycles and
perfect graphs, particularly cliques.

Theorem~\ref{omega} generalizes a result of~\cite{Connectedness} which asserts that no $3$-chromatic graph $G$ is $3$-mixing. Their proof focuses on $3$-colourings of an odd cycle within $G$. They classify $3$-colourings of $G$ based on certain configurations realized on the odd cycle, and argue that colourings corresponding to different configurations must lie in different components of $\mathscr{C}_3(G)$. We follow a similar trajectory, except that we deal with circular colourings of both odd cycles and cliques. First, we need a general notion to describe colourings with limited flexibility.  Recall Eq.~\eqref{eqn:availcirc} (of Section~\ref{degen}) where we find the set of available colours for recolouring the vertex $v$.  We 
now examine the case where this set is a contiguous interval of colours in $G_{k,q}$.

\begin{defn}
A $(k,q)$-colouring $f$ of a graph $F$ is said to be \emph{constricting} if, for every $v\in V(F)$, the set $\bigcap_{u\in N(v)}N(f(u))$ of available colours for $v$ is an interval of $V(G_{k,q})$. 
\end{defn}

Notice, for example, that every frozen colouring is constricting. As it turns out, every $(k,q)$-colouring is constricting when $k/q<4$. Also, in the case of an $r$-clique, every $(k,q)$-colouring is constricting when $k/q<r+1$.

\begin{lem}
\label{<4}
If $k/q<4$, then every  $(k,q)$-colouring $f$ of a graph $F$ is constricting.
\end{lem}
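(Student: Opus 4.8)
The plan is to work directly with the explicit description of the available colours in Eq.~\eqref{eqn:availcirc} and to show that a $(k,q)$-colouring can \emph{fail} to be constricting only when $k/q \geq 4$. Fix a vertex $v \in V(F)$. If $v$ has no neighbours, then the available set is all of $V(G_{k,q})$, which is an interval, so I may assume $v$ has at least one neighbour. By Eq.~\eqref{eqn:availcirc}, the set of available colours for $v$ is the complement in $V(G_{k,q})$ of the union $\bigcup_{u\in N(v)} I_u$, where $I_u := [f(u)-q+1,\,f(u)+q-1]$. Since $k\geq 2q$, each forbidden interval $I_u$ is a proper arc of the cyclic colour set containing exactly $2q-1$ colours.

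The key step is a counting argument on the cyclic order of the colours $\{0,1,\dots,k-1\}$. Viewing these arranged on a circle, the forbidden set $\bigcup_u I_u$ and the available set (its complement) are each unions of maximal arcs, and these arcs alternate around the circle. Suppose, for contradiction, that the available set is \emph{not} an interval; then it consists of at least two maximal arcs, and by the alternation there are likewise at least two maximal forbidden arcs. Every maximal forbidden arc contains at least one interval $I_u$ (any colour in it lies in some $I_u$, and $I_u$ is connected, hence entirely contained in that arc), so each maximal forbidden arc has at least $2q-1$ colours, while each maximal available arc has at least one colour. Summing the lengths of these arcs around the circle gives
\[
k \;\geq\; 2(2q-1) + 2\cdot 1 \;=\; 4q,
\]
contradicting $k/q < 4$. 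Hence the available set for $v$ is an interval, and since $v$ was arbitrary, $f$ is constricting.

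I expect the only delicate point to be the bookkeeping for the degenerate cases — an isolated vertex (available set equal to all of $V(G_{k,q})$) and an empty available set — both of which count as intervals and so cause no trouble. The substance of the argument is the alternation-and-length count, which forces an even number of maximal arcs and immediately yields $k \geq 4q$ as soon as two distinct available arcs are present; this is what pins the threshold at exactly $k/q=4$.
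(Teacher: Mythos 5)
Your proof is correct and is essentially the paper's argument: both derive the contradiction $k \geq 2 + 2(2q-1) = 4q$ from the fact that a non-interval available set forces two available colours together with two disjoint forbidden blocks, each of size at least $2q-1$. The only difference is presentational --- the paper directly exhibits two neighbours $u,w$ with disjoint forbidden intervals $[f(u)-q+1,f(u)+q-1]$ and $[f(w)-q+1,f(w)+q-1]$, whereas you package the same count via the alternation of maximal forbidden and available arcs, which is a slightly more careful justification of the same step.
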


\begin{proof}
Let $v\in V(F)$. Recall Eq.~\eqref{eqn:availcirc}: the set of available colours for $v$ is
\[\bigcap_{u\in N(v)}N(f(u))=V(G_{k,q}) - \bigcup_{u\in N(v)}[f(u)-q+1,f(u)+q-1].\]
For each $u\in N(v)$, define $[a_u,b_u]=[f(u)-q+1,f(u)+q-1]$. If the set of available colours for $v$ is not an interval, then there must exist 
$u,w\in N(v)$ such that 
\[[a_u,b_u]\cap[a_w,b_w]=\emptyset.\]
Also, there must be at least two available colours for $v$, say $f(v)$ and $c$. Therefore,
\[k=|V(G_{k,q})|\geq |\{f(v),c\}|+|[a_u,b_u]|+|[a_w,b_w]|\]
\[=2+(2q-1)+(2q-1)=4q\]
contradicting the assumption that $k/q<4$. The result follows. 
\end{proof}

\begin{lem}
\label{<m+1}
If $k/q<r+1$, then every $(k,q)$-colouring $f$ of $K_r$ is constricting.
\end{lem}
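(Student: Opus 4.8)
The plan is to mirror the counting argument in the proof of Lemma~\ref{<4}, but to organize it around the cyclic gaps between the colours used on the clique. First I would note that since $K_r$ is complete and $f$ is a homomorphism, the colours $f(v_1),\dots,f(v_r)$ are pairwise adjacent in $G_{k,q}$ and hence distinct. Writing them in cyclic order as $c_1,c_2,\dots,c_r$ around $V(G_{k,q})=\mathbb{Z}_k$, and letting $g_i$ be the cyclic gap from $c_i$ to $c_{i+1}$ (indices mod $r$), we have $\sum_{i=1}^r g_i=k$, and adjacency of the consecutive colours $c_i,c_{i+1}$ forces $q\le g_i\le k-q$ for every $i$ (the condition $c_i\sim c_{i+1}$ is exactly $\min(g_i,k-g_i)\ge q$, and $k\ge 2q$ by the standing assumption on $G_{k,q}$).

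Next I would fix a vertex $v$, say $f(v)=c_m$, and describe the available set $A=\bigcap_{u\in N(v)}N(f(u))$ explicitly via Eq.~\eqref{eqn:availcirc}: a colour $x$ lies in $A$ precisely when it is at cyclic distance at least $q$ from every $c_i$ with $i\ne m$. The key structural observation --- and the step I expect to be the main obstacle --- is that for a point $x$ lying strictly inside the gap between consecutive colours $c_j$ and $c_{j+1}$, membership in $A$ is governed only by the two endpoints of that gap: $x\in A$ if and only if $x$ is at distance $\ge q$ from $c_j$ and from $c_{j+1}$ (ignoring whichever of these equals $c_m$). The reason is that the two cyclically nearest colours to such an $x$ are precisely $c_j$ and $c_{j+1}$, so any route on the cycle from $x$ to any other colour must first pass one of them; combined with $k\ge 2q$, this upgrades ``distance $\ge q$ from the two neighbours'' to ``distance $\ge q$ from all colours''. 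Verifying this wrap-around bound cleanly is the delicate point.

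With that in hand the structure of $A$ is transparent. Around $c_m$ the two gaps $g_{m-1},g_m$ merge into a single arc $[c_{m-1}+q,\,c_{m+1}-q]$, which is nonempty and contains $c_m$ since $g_{m-1},g_m\ge q$; and each remaining gap contributes a separate arc $[c_j+q,\,c_{j+1}-q]$ of $A$, nonempty precisely when $g_j\ge 2q$. Hence $A$ is a single interval if and only if no gap $g_j$ with $j\notin\{m-1,m\}$ satisfies $g_j\ge 2q$.

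Finally I would close by contradiction. The cases $r\le 2$ are immediate, since then $A$ is the complement of at most one interval and so is automatically an interval; thus assume $r\ge 3$, so that $g_{m-1}$, $g_m$ and any third gap are distinct. If $A$ fails to be an interval for some $v$, then some gap $g_{j_0}$ with $j_0\notin\{m-1,m\}$ has $g_{j_0}\ge 2q$; since the two gaps adjacent to $c_m$ and the remaining $r-3$ gaps are each at least $q$, summing gives
\[
k=\sum_{i=1}^r g_i\ \ge\ g_{m-1}+g_m+g_{j_0}+(r-3)q\ \ge\ q+q+2q+(r-3)q=(r+1)q,
\]
so $k/q\ge r+1$, contradicting the hypothesis $k/q<r+1$. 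Therefore $A$ is an interval for every $v$, i.e. $f$ is constricting.
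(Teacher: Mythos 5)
Your proof is correct and is essentially the paper's argument recast in cyclic-gap form: the paper anchors at $f(v_0)=0$, lower-bounds the size of the union of the forbidden intervals $[f(v_i)-q+1,f(v_i)+q-1]$ using the fact that consecutive colours are at least $q$ apart, and derives $k\geq(r+1)q$ from two available colours separated by that union --- which is exactly your count $k=\sum g_i\geq q+q+2q+(r-3)q$. Your explicit description of the available set as a union of arcs (and the verification that membership is governed by the two nearest colours) is more than the counting requires, but it is sound.
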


\begin{proof}
Let $v_0\in V(K_r)$ be arbitrary. Suppose, without loss of generality, that $f(v_0)=0$. Label the vertices of $V(K_r)-v_0$ by $v_1,v_2,\dots,v_{r-1}$ so that
\[0=f(v_0)<f(v_1)<\dots<f(v_{r-1})\leq k-q.\]
For $1\leq i\leq r-1$ define $[a_i,b_i]=[f(v_i)-q+1,f(v_i)+q-1]$. For each $i$, $1\leq i\leq r-2$, we must have $f(v_i)+q\leq f(v_{i+1})$. This implies that for any $1\leq s<t\leq r-1$,
\begin{equation}\label{intervals}\left|\bigcup_{i=s}^t[a_i,b_i]\right|\geq (2q-1)+(t-s)q = (t-s+2)q-1.\end{equation}

If the set of available colours for $v_0$ is not an interval, then there are at least two colours, say $f(v_0)=0$ and $c$, 
which are available for $v_0$ and for which some $i'$ satisfies $1\leq i'\leq r-2$ and $b_{i'}<c<a_{i'+1}$. This implies
\begin{equation}\label{emptycap}\left(\bigcup_{i=1}^{i'}[a_i,b_i]\right)\bigcap \left(\bigcup_{i=i'+1}^{r-1}[a_i,b_i]\right) =\emptyset.\end{equation}
Therefore,
\[k=|V(G_{k,q})|\geq |\{0,c\}|+\left|\bigcup_{i=1}^{r-1}[a_i,b_i]\right|\]
\[\geq 2+ ((i'+1)q-1) + ((r-i')q-1) = (r+1)q\]
by Eq.~\eqref{intervals} and Eq.~\eqref{emptycap}. This contradicts the assumption that $k/q<r+1$. The result follows. 
\end{proof}

We are interested in the behaviour of constricting colourings on cycles when moving to adjacent colourings in the colour graph. 
To make this precise, consider the following definition.  (Note in the following indices of vertices in $C$ are reduced modulo $\ell + 1$.)

\begin{defn}
\label{tausigma}
Suppose that $C=(v_0,v_1,\dots,v_\ell)$ is a cycle of length $\ell+1$ in $F$. Given a $(k,q)$-colouring $f$ of $F$ and $0\leq i\leq \ell$, define
\[\tau(f,i):= f(v_{i+1})-f(v_i)\mod k.\]
Also, define
\[\sigma(f):=\sum_{i=0}^\ell \tau(f,i).\]
\end{defn}

The value of $\sigma(f)$ provides information about how many times the sequence $f(v_0),f(v_1),\dots,f(v_\ell),f(v_0)$ 
wraps around $V(G_{k,q})$. For constricting colourings, $\sigma$ is invariant under moving to adjacent colourings in the colour graph. 

\begin{prop}
\label{samesigma}
Let $f$ be a constricting $(k,q)$-colouring of $F$. If $f\sim g$ in $\mathscr{C}_{k,q}(F)$, then $\sigma(g)=\sigma(f)$. 
\end{prop}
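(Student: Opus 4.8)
The plan is to analyze how $\sigma$ changes under a single recolouring by localizing the effect to the cycle $C$. Suppose $f\sim g$, so $f$ and $g$ differ on exactly one vertex $w$. If $w\notin V(C)$, then every term $\tau(\cdot,i)$ depends only on the colours of cycle vertices, which are unchanged, so $\sigma(g)=\sigma(f)$ immediately. Hence I would reduce to the case $w=v_j$ for some $j$. Since each cycle vertex lies in exactly the two cycle-edges $v_{j-1}v_j$ and $v_jv_{j+1}$, only the two terms $\tau(\cdot,j-1)$ and $\tau(\cdot,j)$ can change, while all other terms agree for $f$ and $g$.

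Writing $a=f(v_{j-1})=g(v_{j-1})$, $b=f(v_{j+1})=g(v_{j+1})$, $x=f(v_j)$ and $y=g(v_j)$, the key quantity is
\[\phi(c):=\big((c-a)\bmod k\big)+\big((b-c)\bmod k\big),\]
since $\tau(f,j-1)+\tau(f,j)=\phi(x)$ and $\tau(g,j-1)+\tau(g,j)=\phi(y)$, so that $\sigma(g)-\sigma(f)=\phi(y)-\phi(x)$. I would then show that $\phi$ is a step function on $V(G_{k,q})$ taking exactly two values that differ by $k$: as $c$ advances by one around the circle, the two summands change by $+1$ and $-1$ respectively, cancelling, except at the two places where a summand wraps modulo $k$ --- namely when $c$ crosses $a$ (a net jump of $-k$) and when $c$ crosses $b$ (a net jump of $+k$). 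These two cut points partition $V(G_{k,q})$ into two arcs, on each of which $\phi$ is constant. (If $a$ and $b$ are consecutive, the two cuts coincide and $\phi$ is globally constant, a harmless degenerate case.)

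To finish, I would invoke the constricting hypothesis. Both $x$ and $y$ lie in the available set $A:=\bigcap_{u\in N(v_j)}N(f(u))$, because $f$ and $g$ agree on $N(v_j)$ and are both proper. Since $v_{j-1},v_{j+1}\in N(v_j)$, the set $A$ is contained in $N(a)\cap N(b)$, hence avoids the non-neighbour intervals $[a-q+1,a+q-1]$ and $[b-q+1,b+q-1]$; in particular $a\notin A$ and $b\notin A$. Because $f$ is constricting, $A$ is an interval, and an interval missing both $a$ and $b$ cannot straddle either cut point of $\phi$. Therefore $A$ lies inside a single arc on which $\phi$ is constant, giving $\phi(x)=\phi(y)$ and hence $\sigma(g)=\sigma(f)$.

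The main obstacle is the step-function analysis of $\phi$: the whole point is that a generic recolouring could in principle change the winding sum by $\pm k$, so one must pin down precisely where $\phi$ jumps and verify that the interval structure forced by ``constricting'' confines $x$ and $y$ to a single level set. The reduction to a cycle vertex and the bookkeeping of which $\tau$-terms change are routine by comparison.
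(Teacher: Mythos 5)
Your proof is correct and takes essentially the same route as the paper's: both reduce to the two $\tau$-terms incident to the recoloured cycle vertex and use the constricting hypothesis to confine both the old and new colours to a single arc determined by the colours of the two cycle-neighbours, on which the sum of those two terms is constant. Your step-function analysis of $\phi$ merely makes explicit what the paper asserts in one line via the containment of the available interval in $[f(v_\ell)+q,\,f(v_1)-q]$.
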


\begin{proof}
Note that $f$ and $g$ differ on exactly one vertex. If that vertex is not on $C$, then $\sigma(g)=\sigma(f)$ trivially. 
So, suppose, without loss of generality, that $g(v_0)\neq f(v_0)$. This means that the set of available colours for $v_0$ under $f$ 
is an interval contained in $[f(v_\ell)+q,f(v_1)-q]$. In particular, $g(v_0)$ lies in this interval. It follows that 
$\tau(f,\ell)+\tau(f,0)=\tau(g,\ell)+\tau(g,0)$, which proves the result. 
\end{proof}

Thus, in the case that all $(k,q)$-colourings of $G$ are constricting, colourings with different values of $\sigma$ must lie in 
different components of the colour graph. We can show that $G$ is not $(k,q)$-mixing by simply exhibiting $(k,q)$-colourings which 
attain different values of $\sigma$. Sometimes this can be done by composing a $(k,q)$-colouring $f$ with a reflection of $V(G_{k,q})$ about $0$. 

\begin{prop}
\label{sigma+sigma}
Let $f$ be a $(k,q)$-colouring of $F$ and define $f'$ by $f'(v)=k-f(v)\mod k$ for all $v\in V(F)$. Then $f'$ is a $(k,q)$-colouring of $F$ and  $\sigma(f)+\sigma(f')=(\ell+1)k$. 
\end{prop}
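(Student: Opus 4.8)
The plan is to verify directly that $f'$ is a proper $(k,q)$-colouring and then compute $\sigma(f')$ term by term using the definition of $\tau$. The statement is really two claims, and I would handle them separately.

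First I would check that $f'$ is a homomorphism $F\to G_{k,q}$. Recall that $uv\in E(G_{k,q})$ iff $q\leq |i-j|\leq k-q$ (computed in the circular sense). Since $f$ is a $(k,q)$-colouring, for every edge $uv\in E(F)$ the colours $f(u)$ and $f(v)$ are adjacent in $G_{k,q}$. The map $x\mapsto k-x \pmod k$ is precisely the reflection of $V(G_{k,q})$ about $0$, and this reflection is an automorphism of $G_{k,q}$ because the adjacency condition depends only on the circular distance $|i-j| \bmod k$, which is preserved under $x\mapsto -x$. Hence $f'(u)f'(v)=(k-f(u))(k-f(v))$ is an edge whenever $f(u)f(v)$ is, so $f'$ is a $(k,q)$-colouring.

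For the $\sigma$ identity, I would work directly from Definition~\ref{tausigma}. Fix $0\leq i\leq \ell$ and compute
\[
\tau(f',i)=f'(v_{i+1})-f'(v_i)=(k-f(v_{i+1}))-(k-f(v_i))=f(v_i)-f(v_{i+1})\pmod k.
\]
Now $\tau(f,i)=f(v_{i+1})-f(v_i)\bmod k$, so $\tau(f,i)+\tau(f',i)\equiv 0\pmod k$. The key point is that $\tau$ takes values in $\{0,1,\dots,k-1\}$ by the $\bmod\ k$ convention, and for an edge of the cycle the two endpoints receive distinct colours (as $F$ contains this cycle and $f$ is proper), so $\tau(f,i)\neq 0$; likewise $\tau(f',i)\neq 0$. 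Two values in $\{1,\dots,k-1\}$ summing to $0$ modulo $k$ must sum to exactly $k$, giving $\tau(f,i)+\tau(f',i)=k$ for each $i$. Summing over the $\ell+1$ edges of $C$ yields $\sigma(f)+\sigma(f')=(\ell+1)k$.

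The main thing to be careful about is the modular bookkeeping: $\tau$ is defined as a residue in $\{0,\dots,k-1\}$, so the congruence $\tau(f,i)+\tau(f',i)\equiv 0$ only upgrades to the exact equation $=k$ once one rules out the degenerate case where a term equals $0$. That is exactly why properness of $f$ along the cycle $C$ matters, and I would make sure to invoke it. Everything else is a routine substitution, so I do not expect any genuine obstacle here.
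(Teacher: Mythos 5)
Your proof is correct and follows essentially the same route as the paper: both establish the termwise identity $\tau(f,i)+\tau(f',i)=k$ and sum over the cycle. The only cosmetic difference is that the paper rules out the degenerate case via the stronger adjacency bound $q\leq\tau(f,i)\leq k-q$, whereas you use only $\tau(f,i)\neq 0$; either suffices.
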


\begin{proof}
The fact that $f'$ is a $(k,q)$-colouring of $F$ follows easily from the definition of a $(k,q)$-colouring. For $0\leq i\leq \ell$ we have 
\[q\leq\tau(f,i)\leq k-q,\text{ and}\]
\[q\leq\tau(f',i)\leq k-q.\]
In particular, $0<\tau(f,i)+\tau(f',i)<2k$. Now, notice that $\tau(f,i)+\tau(f',i)=0\mod k$, and so the only possiblility is that $\tau(f,i)+\tau(f',i)=k$. Summing over all $i$ gives the desired result. 
\end{proof}

\begin{prop}
\label{kdividessigma}
If $f$ is a $(k,q)$-colouring of $F$, then $k\mid \sigma(f)$. 
\end{prop}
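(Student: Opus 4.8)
The plan is to show that $\sigma(f) = \sum_{i=0}^\ell \tau(f,i)$ is a multiple of $k$ by exhibiting it as a telescoping sum modulo $k$. Recall that $\tau(f,i) = f(v_{i+1}) - f(v_i) \bmod k$, so that each $\tau(f,i)$ is congruent modulo $k$ to the genuine difference $f(v_{i+1}) - f(v_i)$ computed in $\mathbb{Z}$ (before reduction). The key observation is that, working modulo $k$, the reductions do not matter for the sum: we have
\[
\sigma(f) = \sum_{i=0}^\ell \tau(f,i) \equiv \sum_{i=0}^\ell \left( f(v_{i+1}) - f(v_i) \right) \pmod{k},
\]
where on the right-hand side the indices of $v$ are still taken modulo $\ell+1$, so that $v_{\ell+1} = v_0$.

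The heart of the argument is then that the right-hand sum telescopes around the cycle. Writing out the terms, each $f(v_j)$ appears once with a positive sign (as $f(v_{i+1})$ when $i+1 = j$) and once with a negative sign (as $f(v_i)$ when $i = j$), since the cycle $C = (v_0, v_1, \dots, v_\ell)$ closes up with $v_{\ell+1} = v_0$. Hence every term cancels and
\[
\sum_{i=0}^\ell \left( f(v_{i+1}) - f(v_i) \right) = 0.
\]
Combining this with the congruence above gives $\sigma(f) \equiv 0 \pmod{k}$, which is precisely the claim $k \mid \sigma(f)$.

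The only point requiring a little care — and the step I would treat as the main (modest) obstacle — is justifying the first congruence, namely that reducing each individual $\tau(f,i)$ modulo $k$ before summing yields the same residue class as summing the unreduced integer differences. This is immediate once one notes that $a \bmod k \equiv a \pmod k$ for every integer $a$, and that congruence modulo $k$ is preserved under finite sums; there is no subtlety beyond bookkeeping. I would state this explicitly rather than leave it implicit, since the definition of $\tau$ builds the modular reduction into each summand, and it is exactly this that could mislead a reader into thinking $\sigma(f)$ is merely some integer in a bounded range rather than a multiple of $k$.

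Note that this result combines naturally with the earlier propositions: since $\sigma$ is invariant under adjacency for constricting colourings (Proposition~\ref{samesigma}), and since $\sigma(f) + \sigma(f') = (\ell+1)k$ for the reflected colouring $f'$ (Proposition~\ref{sigma+sigma}), knowing that $\sigma$ is always a multiple of $k$ lets one count the number of distinct attainable values of $\sigma/k$ and thereby separate colourings into different components of $\mathscr{C}_{k,q}(F)$.
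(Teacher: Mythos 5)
Your proof is correct and follows essentially the same route as the paper: both arguments write each $\tau(f,i)$ as the integer difference $f(v_{i+1})-f(v_i)$ up to a multiple of $k$ and then observe that the differences telescope to zero around the closed cycle, leaving a multiple of $k$. The paper phrases this with explicit integers $q_i$ satisfying $\tau(f,i)=f(v_{i+1})-f(v_i)+q_i k$ rather than working in residue classes, but the content is identical.
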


\begin{proof}
Recall that for $0\leq i\leq \ell$ we have $\tau(f,i)=f(v_{i+1})-f(v_i)\mod k$.  Thus, $\tau(f,i) = f(v_{i+1}) - f(v_i) + q_i \cdot k$.
The sum $\sum_{i=0}^\ell \tau(f,i) = \sigma(f)$ reduces to $\sum q_i \cdot k$. The result follows. 
\end{proof}

We are now in position to prove Theorem~\ref{omega}.

\begin{proof}[Proof of Theorem~\ref{omega}]
Let $G$ be a non-bipartite graph. If $\omega(G)\geq 4$, then let $F$ be an $\omega(G)$-clique in $G$. Otherwise, let $F$ be an odd cycle in $G$. Let $k/q<\max\{4,\omega(G)+1\}$ and let $g$ be any $(k,q)$-colouring of $G$. Let $f$ be the restriction of $g$ to $F$. By Lemmas~\ref{<4} and~\ref{<m+1}, we have that every $(k,q)$-colouring of $F$ is constricting. 

Let $\ell=|V(F)|-1$ and let $C=\{ v_0,v_1,\dots, v_\ell \}$ be the vertices of a cycle in $F$ such that 
\begin{itemize}
\item if $F$ is an odd cycle, then let $C=F$ (and the vertices are labelled in the natural order around $C$);
\item if $F$ is a clique, then choose the labelling such that $0\leq f(v_0)<f(v_1)<\dots <f(v_\ell)<k$.
\end{itemize} 
Now, let $g'$ be the colouring of $G$ defined by $g'(v)=k-g(v)\mod k$ for all $v\in V(G)$, and let $f'$ be the restriction of $g'$ to $F$. Then by Propositions~\ref{kdividessigma} and~\ref{sigma+sigma} we have that $k$ divides $\sigma(f)$ and $\sigma(f')$ and that $\sigma(f)+\sigma(f')=|V(F)|k$. In the case that $|V(F)|$ is odd, we get $\sigma(f)\neq\sigma(f')$ immediately. Otherwise, we have that $F$ is a clique and $|V(F)|\geq 4$. By our choice of $C$ we get $\sigma(f)=k$, and so $\sigma(f)\neq\sigma(f')$ in general. 

It follows that $g$ and $g'$ are in different components of $\mathscr{C}_{k,q}(G)$. If not, a path between $g$ and $g'$ in $\mathscr{C}_{k,q}(G)$ would indicate a path between $f$ and $f'$ in $\mathscr{C}_{k,q}(F)$. However, no such path can exist by Proposition~\ref{samesigma}.
\end{proof}

\section{Extending homomorphisms}
\label{ext}

As mentioned in the introduction, the motivation for studying circular mixing problems grew out of work on
extending circular colourings.  In the papers~\cite{Paint,AlbMoore,AlbWest}, the general question is as follows: 
Given $d,\ell\geq0$ and a $k$-colourable graph $X$ containing subgraphs $X_1, X_2, \dots, X_t$
together with a $(k+\ell)$-colouring of $X_1 \cup \cdots \cup X_t$,
can the $(k+\ell)$-colouring be extended to all of $X$, provided the distance
$d_{X}(X_i, X_j) \geq d$ for $i \neq j$?
For example, Kostochka proves a result in the affirmative when each $X_i$ is complete,
$\ell = 1$, and $d = 4k$ (as stated and proved in~\cite{AlbMoore}).  Albertson and West~\cite{AlbWest} study circular colourings where each $X_i$ is a single vertex, again
proving a positive result. They also conjecture such a theorem is possible when each $X_i$ is a circular clique.
In~\cite{BrewsterNoel}, we show that for a fixed $\ell$ no such theorem is possible. We now show how to use circular mixing to find such an $\ell$ which depends on $(k,q)$, provided $\gcd(k,q)=1$.

In~\cite{AlbWest,BrewsterNoel} the following product is a useful construction.
\begin{defn}
Let $G$ and $H$ be graphs.  The \emph{extension product} $G \bowtie F$
has as its vertex set $V(G) \times V(F)$ with $(g_1,f_1)(g_2,f_2)$ an edge
if $g_1 g_2 \in E(G)$ and either $f_1 f_2 \in E(F)$ or $f_1 = f_2$.   
\end{defn}
Alternatively one may view $G \bowtie F$ as the categorical product of
$G$ with a reflexive copy (a loop on each vertex) of $F$.  
Of particular importance for us is the product $G \bowtie P_n$, where $P_n$ 
is the path of length $n-1$ with vertex set $\{ 1, 2, \dots, n \}$.
It is straightforward to verify $G \bowtie F \to G$ via the projection onto
the first coordinate.  As a point of notation, for a fixed $i \in V(F)$,
the subgraph induced by $\{ (v, i) | v \in V(G) \}$ is isomorphic to $G$
and is denoted by $G_i$.  Given a homomorphism $\varphi: G \bowtie F \to H$,
the mapping defined by $\varphi_i(u) := \varphi(u,i)$ is a homomorphism $\varphi_i: G_i \to H$.

\begin{lem}\label{lem:homotopy}
Let $G$ be an $H$-colourable graph and $X = G \bowtie P_n$.  Suppose $\varphi_1: G_1 \to H$ and $\varphi_n: G_n \to H$
are homomorphisms.  Then there exists $\varphi:X \to H$ such that $\varphi|_{G_i} = \varphi_i$, $i \in \{ 1, n \}$,
i.e. there is an extension of $\varphi_1 \cup \varphi_n$ to all of $X$,
if and only if $d_{\mathscr{H}_H(G)}(\varphi_1, \varphi_n) < n$.
\end{lem}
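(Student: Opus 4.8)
The plan is to connect the combinatorial notion of a homotopy in $\mathscr{H}_H(G)$ with an explicit extension along the path $P_n$. The key observation is that the extension product $X = G \bowtie P_n$ is built from $n$ copies $G_1, \dots, G_n$ of $G$, where consecutive copies $G_i$ and $G_{i+1}$ are joined by edges $(u,i)(w,i+1)$ whenever $uw \in E(G)$ \emph{or} $u = w$. Thus a homomorphism $\varphi : X \to H$ restricts on each layer to a homomorphism $\varphi_i : G_i \to H$, and the edges between consecutive layers encode precisely the adjacency condition in $\mathscr{H}_H(G)$: for $\varphi$ to be a valid homomorphism, whenever $uw \in E(G)$ we need $\varphi_i(u)\varphi_{i+1}(w) \in E(H)$ (the ``$u=w$'' reflexive edges impose $\varphi_i(u)\varphi_{i+1}(u) \in E(H)$, but that is subsumed once we note $uw \in E(G)$ forces the cross condition). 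This is exactly the statement that $\varphi_i \sim \varphi_{i+1}$ in $\mathscr{H}_H(G)$, where I recall that adjacency (including loops) means $f(u)g(w) \in E(H)$ for all $uw \in E(G)$.

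First I would prove the forward direction. Suppose an extension $\varphi : X \to H$ exists with $\varphi|_{G_1} = \varphi_1$ and $\varphi|_{G_n} = \varphi_n$. Setting $\psi_i := \varphi|_{G_i}$ for each $i$, the edge structure of $X$ described above immediately gives $\psi_i \sim \psi_{i+1}$ in $\mathscr{H}_H(G)$ for every $1 \leq i \leq n-1$. Hence $\psi_1, \psi_2, \dots, \psi_n$ is a walk of length $n-1$ in $\mathscr{H}_H(G)$ from $\varphi_1$ to $\varphi_n$ (loops are permitted, so repeated vertices cause no issue), which shows $d_{\mathscr{H}_H(G)}(\varphi_1, \varphi_n) \leq n-1 < n$.

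Conversely, suppose $d_{\mathscr{H}_H(G)}(\varphi_1, \varphi_n) < n$, say the distance is $m \leq n-1$. Take a shortest walk $\varphi_1 = \psi_1 \sim \psi_2 \sim \cdots \sim \psi_{m+1} = \varphi_n$ in $\mathscr{H}_H(G)$. I would then pad this walk out to exactly $n$ terms by repeating the final map: define $\psi_{m+1} = \psi_{m+2} = \cdots = \psi_n = \varphi_n$. Because $\mathscr{H}_H(G)$ is reflexive (by definition every vertex carries a loop), these repetitions are legitimate adjacencies, so $\psi_1, \dots, \psi_n$ is a walk with $\psi_i \sim \psi_{i+1}$ for all $i$. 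Now define $\varphi : X \to H$ by $\varphi(u,i) := \psi_i(u)$. The layer conditions $\varphi|_{G_1} = \varphi_1$ and $\varphi|_{G_n} = \varphi_n$ hold by construction, and I must check $\varphi$ is a homomorphism: an edge of $X$ is of the form $(u,i)(w,j)$ where $uw \in E(G)$ and $|i-j| \leq 1$. When $i = j$ this is handled since $\psi_i$ is a homomorphism; when $|i-j| = 1$ it follows from $\psi_i \sim \psi_{i+1}$ in $\mathscr{H}_H(G)$ applied to the edge $uw$.

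The main obstacle, and the place to be careful, is the bookkeeping on walk lengths versus distance: a walk visiting $n$ vertices $\psi_1, \dots, \psi_n$ corresponds to $n-1$ steps, so the condition ``$d < n$'' is exactly what allows a geodesic of at most $n-1$ steps to be realized within $n$ layers, with the reflexive loops absorbing any slack. I would emphasize that the reflexivity of $\mathscr{H}_H(G)$ is essential for the padding argument and is precisely what the reflexive (``$f_1 = f_2$'') edges of the extension product supply; without loops, a strictly shorter walk could not be stretched to fill the path. All remaining verifications are routine checks of the edge conditions, so no deep step is required beyond correctly aligning the two adjacency notions.
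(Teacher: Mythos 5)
Your argument is correct and is essentially the paper's proof (the paper merely observes that $\psi_1\cup\psi_2$ defines a homomorphism on $G\bowtie P_2$ exactly when $\psi_1\psi_2\in E(\mathscr{H}_H(G))$ and then inducts); your write-up adds the explicit walk-length bookkeeping and the reflexive padding, both of which are right. One cosmetic slip: in your opening description of the cross-layer edges, $uw\in E(G)$ is \emph{always} required---the ``or equal'' clause in the definition of $\bowtie$ applies to the $P_n$-coordinate, not the $G$-coordinate---but your final verification uses the correct condition ``$uw\in E(G)$ and $|i-j|\leq 1$'', so the proof stands.
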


\begin{proof}
Given two copies of $G$ and two homomorphisms $\psi_i: G_i \to H$, $i=1,2$, the single map 
$\psi_1 \cup \psi_2 : G_1 \cup G_2 \to H$
defines a homomorphism on $G \bowtie P_2$ if and only if $\psi_1 \psi_2$ is an edge of $\mathscr{H}_H(G)$.
The general result follows by induction.
\end{proof}

We remark the above result may be rephrased as $\varphi_1 \cup \varphi_n$ can be extended to
$G \bowtie P_n \to H$ if and only if $P_n \to \mathscr{H}_H(G)$ where the
end points of $P_n$ map to the precolourings $\varphi_1, \varphi_n$.

\begin{thm}
Suppose $G$ is $H$-colourable and $\mathscr{H}_H(G)$ is disconnected.  Then for any positive integer $d$, there exists a $G$-colourable
graph $X$ containing two subgraphs $X_1$ and $X_2$ such that $\varphi : X_1 \cup X_2 \to H$, $d_{X}(X_1, X_2) \geq d$,
but $\varphi$ does not extend to a homomorphism $X \to H$.
\end{thm}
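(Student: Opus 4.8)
The plan is to exploit the key equivalence established in Lemma~\ref{lem:homotopy}: an extension of a precolouring on the two ``endpoint'' copies of $G$ inside $G\bowtie P_n$ exists precisely when the two homomorphisms lie within distance less than $n$ in $\mathscr{H}_H(G)$. Since $\mathscr{H}_H(G)$ is assumed disconnected, there exist two homomorphisms $\varphi_1,\varphi_2:G\to H$ lying in \emph{different} components, so that $d_{\mathscr{H}_H(G)}(\varphi_1,\varphi_2)=\infty$. The idea is to use these two homomorphisms as the precolouring data and to take $X=G\bowtie P_n$ for a suitably large $n$; the disconnectedness then forces non-extendability regardless of how large we make the separation.

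First I would fix two homomorphisms $\varphi_1,\varphi_2:G\to H$ that lie in distinct components of $\mathscr{H}_H(G)$, which exist precisely because $\mathscr{H}_H(G)$ is disconnected. Given the target distance $d$, I would set $n=d$ (or any $n\geq d$) and let $X=G\bowtie P_n$, with $X_1=G_1$ and $X_2=G_n$ the first and last copies of $G$. Each $G_i$ is isomorphic to $G$, so I may define $\varphi:X_1\cup X_2\to H$ by $\varphi|_{X_1}=\varphi_1$ and $\varphi|_{X_2}=\varphi_2$; this is a valid homomorphism on $X_1\cup X_2$ since $X_1$ and $X_2$ are vertex-disjoint and induce no edges between them (as $1$ and $n$ are non-adjacent in $P_n$ for $n\geq 3$). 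I would then verify that $X$ is $G$-colourable, which is immediate from the projection $G\bowtie P_n\to G$ onto the first coordinate noted in the text, and that $d_X(X_1,X_2)\geq d$, which holds because any path from a vertex of $G_1$ to a vertex of $G_n$ must traverse the path structure of $P_n$ and hence has length at least $n-1\geq d-1$; choosing $n$ slightly larger (say $n=d+1$) guarantees $d_X(X_1,X_2)\geq d$ cleanly.

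The crux is then the non-extendability: by Lemma~\ref{lem:homotopy}, an extension $\varphi:X\to H$ with $\varphi|_{G_1}=\varphi_1$ and $\varphi|_{G_n}=\varphi_n$ exists if and only if $d_{\mathscr{H}_H(G)}(\varphi_1,\varphi_2)<n$. Since $\varphi_1$ and $\varphi_2$ lie in different components, this distance is infinite, so no such extension exists, completing the argument. The main point requiring care, and the step I expect to be the only real obstacle, is the bookkeeping on the graph distance $d_X(X_1,X_2)$: I must confirm that the extension-product structure $G\bowtie P_n$ genuinely forces a long path between the two endpoint copies rather than admitting a shortcut, and that the distance lower bound scales with $n$ as claimed. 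This is a routine verification using the fact that the second-coordinate projection $X\to P_n$ is a homomorphism, so any $X_1$--$X_2$ walk projects to a $1$--$n$ walk in $P_n$ of the same length, which is at least $n-1$; taking $n=d+1$ then yields $d_X(X_1,X_2)\geq d$ and the theorem follows.
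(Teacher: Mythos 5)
Your proposal is correct and follows essentially the same route as the paper: take $X = G\bowtie P_n$ with $n > d$, precolour the endpoint copies $G_1$ and $G_n$ with homomorphisms from distinct components of $\mathscr{H}_H(G)$, and invoke Lemma~\ref{lem:homotopy} to rule out an extension. The extra care you take with the distance bound via the second-coordinate projection is a detail the paper leaves implicit, but it is the same argument.
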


\begin{proof}
Let $X = G \bowtie P_n$ where $n > d$.  Let $\varphi_i: G_i \to H$, $i \in \{ 1, n \}$, be precolourings with
$\varphi_1$ and $\varphi_n$ in different components of $\mathscr{H}_H(G)$.  Then by Lemma~\ref{lem:homotopy} there
is no extension of $\varphi_1 \cup \varphi_n$ to all of $X$.
\end{proof}

Following the spirit of the proof in~\cite{AlbWest}, we provide the general set-up for extending homomorphisms.  The graph
$H$ plays the role of $K_{k+\ell}$ and the graph $G$ plays the role of $K_k$.

\begin{lem}\label{lem:ext}
Let $X$ be a graph containing disjoint subgraphs $X_1,X_2,\dots,X_t$ (not necessarily connected, not necessarily isomorphic).
To each $X_i$ assign a precolouring $f_i:X_i\to H$, thus collectively define a precolouring 
$f_1 \cup \cdots \cup f_t : X_1 \cup \cdots \cup X_t \to H$.
Suppose there exists a graph $G$ and homomorphisms $\gamma:X\to G$ and $\varphi: G\to H$.  Let $\gamma_i = \gamma|_{X_i}$.
If
\begin{itemize}
\item for each $i$ there is a homomorphism $g_i: G\to H$ such that $f_i=g_i\gamma_i$; and
\item $d_{X}(X_i,X_j)\geq d_{\mathscr{H}_H(G)}(g_i,\varphi)+d_{\mathscr{H}_H(G)}(\varphi,g_j)$ for all $i\neq j$.,
\end{itemize}
then there exists an extension of $f_1 \cup \cdots \cup f_t$ to a homomorphism $f: X \to H$.
\end{lem}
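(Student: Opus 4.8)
The plan is to reduce the extension problem to the construction of a single homomorphism $\Phi\colon X\to\mathscr{H}_H(G)$ into the (reflexive) homomorphism graph that is constant, equal to $g_i$, on each subgraph $X_i$. Given such a $\Phi$, I would define $f\colon X\to H$ by $f(v)=\Phi(v)(\gamma(v))$. For $v\in X_i$ this yields $f(v)=g_i(\gamma_i(v))=f_i(v)$, so $f$ extends the precolouring $f_1\cup\cdots\cup f_t$. To see that $f$ is a homomorphism, take $uv\in E(X)$: then $\gamma(u)\gamma(v)\in E(G)$ since $\gamma$ is a homomorphism, and $\Phi(u)\sim\Phi(v)$ in $\mathscr{H}_H(G)$ (allowing equality, as $\mathscr{H}_H(G)$ is reflexive) since $\Phi$ is a homomorphism; by the very definition of $\mathscr{H}_H(G)$ this gives $\Phi(u)(\gamma(u))\,\Phi(v)(\gamma(v))\in E(H)$, i.e.\ $f(u)f(v)\in E(H)$. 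Thus everything rests on building $\Phi$.

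To build $\Phi$, I would anchor the construction at $\varphi$. Write $d_i:=d_{\mathscr{H}_H(G)}(g_i,\varphi)$ and fix a shortest path $g_i=p^{(i)}_0,p^{(i)}_1,\dots,p^{(i)}_{d_i}=\varphi$ in $\mathscr{H}_H(G)$. (For $t=1$ the constant map $\Phi\equiv g_1$ already works, so assume $t\ge 2$, in which case the distance hypothesis forces every $d_i$ to be finite.) For $v\in V(X)$ let $a_i(v):=d_X(v,X_i)$. I declare $v$ to lie in region $R_i$ if $a_i(v)<d_i$, and set $\Phi(v):=p^{(i)}_{a_i(v)}$ there; on the remaining vertices (those with $a_i(v)\ge d_i$ for every $i$) I set $\Phi(v):=\varphi$. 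Intuitively, near $X_i$ we sit at $g_i$ and slide toward the common \emph{anchor} $\varphi$ as we move away.

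The first thing to verify is that $\Phi$ is well defined, i.e.\ no vertex lies in two regions $R_i,R_j$. If $a_i(v)<d_i$ and $a_j(v)<d_j$ with $i\ne j$, the triangle inequality gives $d_X(X_i,X_j)\le a_i(v)+a_j(v)<d_i+d_j$, contradicting the hypothesis $d_X(X_i,X_j)\ge d_i+d_j$. Next, for $v\in X_i$ we have $a_i(v)=0$, so $\Phi(v)=p^{(i)}_0=g_i$ (and when $d_i=0$ the vertex $v$ falls into the anchor region, where $\Phi(v)=\varphi=g_i$ anyway), giving the required boundary values.

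Finally I would check that $\Phi$ respects edges, using that $|a_i(u)-a_i(v)|\le 1$ for every $i$ whenever $uv\in E(X)$. If $u,v$ lie in the same region $R_i$, their images are equal or consecutive on the fixed path, hence adjacent; if both are anchor vertices they share the image $\varphi$; and a transition between $R_i$ and the anchor region forces $a_i(u)=d_i-1$ and $a_i(v)=d_i$, so the images $p^{(i)}_{d_i-1}$ and $\varphi=p^{(i)}_{d_i}$ are path-consecutive. The one case to rule out is an edge between distinct regions $R_i$ and $R_j$: there the triangle inequality would give $d_X(X_i,X_j)\le a_i(u)+1+a_j(v)\le d_i+d_j-1$, again contradicting the distance hypothesis. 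This edge analysis is the crux of the argument, and it is exactly where the two-term lower bound $d_{\mathscr{H}_H(G)}(g_i,\varphi)+d_{\mathscr{H}_H(G)}(\varphi,g_j)$ is used; the main obstacle is organizing these cases so that every edge of $X$ is certified, with the anchor $\varphi$ serving as the common meeting point that lets a single $\Phi$ accommodate all $t$ prescribed values simultaneously.
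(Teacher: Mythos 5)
Your proof is correct and follows essentially the same route as the paper's: anchor shortest paths from each $g_i$ to $\varphi$ in $\mathscr{H}_H(G)$, colour a vertex at distance $d$ from $X_i$ by the $d$-th map on that path composed with $\gamma$, and use $\varphi\gamma$ everywhere else. Your reformulation via a homomorphism $\Phi:X\to\mathscr{H}_H(G)$ and the explicit region/edge case analysis simply spell out the details the paper dismisses as ``easy to check.''
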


\begin{proof}
Let $g_i : X_i \to G$ and let $g_i = g_i^{(0)}, g_i^{(1)}, \dots, g_i^{(n)} = \varphi$ be a shortest $g_i, \varphi$-path in $\mathscr{H}_H(G)$.
Given a vertex $v$ at distance $d \leq n$ from $X_i$, map $v$ to $H$ by $v \mapsto g_i^{(d)}\gamma(v)$.
It is easy to check that this defines a homomorphism of all vertices at distance at most $n$ from $X_i$ to $H$.  
Moreover, the vertices at distance $n$ are mapped to $H$ under $\varphi\gamma$.  Do this for each $i = 1, 2, \dots, t$.
For any unmapped vertices we can use $\varphi\gamma$.  This defines a homomorphism of $X \to H$ which extends 
$f_1 \cup \cdots \cup f_t$.
\end{proof}

A key requirement in the above lemma is that the precolouring $f_i: X_i \to H$ factors through $G$ as $f_i = g_i \gamma_i$.
A classic situation where we can be assured of this factoring is when each $X_i$ is isomorphic to the core of $X$.
Recall $\mathrm{core}(X)$ is the unique (up to isomorphism) minimal subgraph of $X$ to which $X$ admits a homomorphism, 
see~\cite{HellNesetril}.

\begin{thm}
\label{coreExt}
Let $X$ be an $H$-colourable graph containing disjoint subgraphs $X_1, X_2, \dots, X_t$ each isomorphic to $G := \mathrm{core}(X)$.
Further suppose $f_i : X_i \to H$ is a homomorphism for each $i=1,2, \dots, t$.  If $\mathscr{H}_H(G)$ is connected and
$d_{X}(X_i,X_j) \geq 2 \mathrm{rad}(\mathscr{H}_H(G))$, then the precolouring $f_1 \cup \cdots \cup f_t$ extends
to all of $X$.
\end{thm}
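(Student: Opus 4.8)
The plan is to deduce this theorem directly from Lemma~\ref{lem:ext} by verifying its two hypotheses and making suitable choices. The key idea is that the assumption $G = \mathrm{core}(X)$ is exactly what guarantees the factoring hypothesis of Lemma~\ref{lem:ext}, while the connectedness of $\mathscr{H}_H(G)$ together with a radius bound handles the distance hypothesis. So first I would set up the machinery: since $X$ is $H$-colourable, fix a homomorphism $\varphi_X : X \to H$, and since $G = \mathrm{core}(X)$, fix a retraction $\gamma : X \to G$ (which exists by Theorem~\ref{uniqueCoreStiff}(a)). Composing, we would like a map $\varphi : G \to H$; I would obtain it by choosing a center $\varphi$ of $\mathscr{H}_H(G)$, i.e.\ a vertex realizing the radius $\mathrm{rad}(\mathscr{H}_H(G))$, so that every homomorphism $G \to H$ lies within distance $\mathrm{rad}(\mathscr{H}_H(G))$ of $\varphi$ in $\mathscr{H}_H(G)$.

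Next I would verify the factoring hypothesis. For each $i$, we have a precolouring $f_i : X_i \to H$, and $X_i$ is isomorphic to $G = \mathrm{core}(X)$. Let $\iota_i : G \to X_i$ be an isomorphism (viewing $X_i$ as a subgraph of $X$, so $\iota_i$ is really an embedding whose image is $X_i$), and let $\gamma_i = \gamma|_{X_i}$. The composition $\gamma_i \iota_i : G \to G$ is an endomorphism of the core $G$, hence an automorphism (cores have only automorphisms as endomorphisms). I would then define $g_i : G \to H$ by $g_i = f_i \iota_i (\gamma_i \iota_i)^{-1}$, or more cleanly choose the identification of $X_i$ with $G$ so that $\gamma_i$ is essentially the inverse embedding; the point is that one can arrange $f_i = g_i \gamma_i$ on $X_i$ by pushing $f_i$ through this automorphism. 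This is the one step requiring genuine care, since the embedding of $X_i$ into $X$ and the retraction $\gamma$ need not interact trivially; the automorphism $\gamma_i \iota_i$ is the object that must be inverted to make the factoring work, and checking it is indeed an automorphism (using the core property) is the crux.

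With $g_i : G \to H$ in hand for each $i$, I would verify the distance hypothesis of Lemma~\ref{lem:ext}. Because $\mathscr{H}_H(G)$ is connected and $\varphi$ was chosen as a center, we have $d_{\mathscr{H}_H(G)}(g_i, \varphi) \leq \mathrm{rad}(\mathscr{H}_H(G))$ for every $i$, and likewise $d_{\mathscr{H}_H(G)}(\varphi, g_j) \leq \mathrm{rad}(\mathscr{H}_H(G))$. Hence
\[
d_{\mathscr{H}_H(G)}(g_i,\varphi) + d_{\mathscr{H}_H(G)}(\varphi,g_j) \leq 2\,\mathrm{rad}(\mathscr{H}_H(G)) \leq d_X(X_i, X_j)
\]
for all $i \neq j$, which is precisely the required inequality. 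Both hypotheses of Lemma~\ref{lem:ext} are thereby met, so the lemma yields an extension $f : X \to H$ of $f_1 \cup \cdots \cup f_t$, completing the proof.

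I expect the main obstacle to be the factoring step, namely showing $f_i = g_i \gamma_i$. The subtlety is that $\gamma$ is a fixed retraction of $X$ onto a \emph{fixed} copy of the core, whereas $X_i$ is merely \emph{isomorphic} to the core and sits elsewhere in $X$; reconciling these via the automorphism $\gamma_i \iota_i$ and inverting it is where all the real content lies. Everything after that is a bookkeeping application of the triangle-type distance bound against the radius, which is routine once the center $\varphi$ is fixed.
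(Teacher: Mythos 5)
Your proposal is correct and follows essentially the same route as the paper: the paper also observes that $\gamma_i = \gamma|_{X_i}$ must be an isomorphism onto $G$ (because a core admits no homomorphism to a proper subgraph), sets $g_i = f_i\gamma_i^{-1}$, picks a centre $\varphi$ of $\mathscr{H}_H(G)$, and applies Lemma~\ref{lem:ext}. Your detour through the automorphism $\gamma_i\iota_i$ is just a slightly more explicit way of inverting $\gamma_i$, so there is no substantive difference.
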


\begin{proof}
Let $G = \mathrm{core}(X)$ and $\gamma: X \to G$.  (Every graph admits a retraction to its core.)
Furthermore, since cores do not admit homomorphisms to a proper subgraph, each $\gamma_i: X_i \to G$ is an isomorphism.  
Thus we can let $g_i = f_i \gamma_i^{-1}$ and obtain $f_i = g_i \gamma_i$.
Let $\varphi$ be a centre in $\mathscr{H}_H(G)$.  Then $d_{\mathscr{H}_H(G)}(g_i,\varphi) + d_{\mathscr{H}_H(G)}(\varphi,g_j) 
\leq 2 \mathrm{rad}(\mathscr{H}_H(G)) \leq d_{X}(X_i,X_j)$.  By Lemma~\ref{lem:ext}, there is an extension of the
precolouring to an $H$-colouring of $X$.
\end{proof}

We briefly investigate the problem of extending precolourings of circular cliques. This problem was first suggested by a conjecture of Albertson and West~\cite{AlbWest}. In~\cite{BrewsterNoel}, the present authors disproved the conjecture and showed that the problem is more complicated than was previously anticipated. We make some progress in understanding these complications now. Let us begin with a lemma from~\cite{BrewsterNoel} rephrased
in the language of this paper.

\begin{lem}[Brewster and Noel~\cite{BrewsterNoel}]
\label{ceil}
If $k\geq 3(q-1)+1$, then $\mathfrak{M}(G_{k,q})\leq\left\lceil\frac{k}{q}\right\rceil+1$.
\end{lem}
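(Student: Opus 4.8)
The plan is to prove the equivalent statement that $G_{k,q}$ is $s$-mixing for every integer $s\ge r+1$, where $r:=\lceil k/q\rceil=\chi(G_{k,q})$; since $\mathfrak{M}$ is the least such threshold, this yields $\mathfrak{M}(G_{k,q})\le r+1$. I would proceed by induction, taking as base case $q=1$, i.e.\ $G_{k,1}=K_k$, for which it is classical (see~\cite{Connectedness}) that $\mathfrak{M}(K_k)=k+1=r+1$. For the inductive step I assume $\gcd(k,q)=1$ (the general case being recoverable from the retraction of Proposition~\ref{ret}) and work with the lower parent $(k',q')$ of $(k,q)$.

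The engine of the induction is that deleting a single vertex converts the circular clique into something controlled by its lower parent. By Proposition~\ref{d=1}, $G_{k,q}-0$ dismantles to $G_{k',q'}$, so applying Corollary~\ref{folds} to a dismantling retraction of the \emph{guest} (taking the host to be $K_s$, folded trivially) gives $c(\mathscr{H}_{K_s}(G_{k,q}-0))=c(\mathscr{H}_{K_s}(G_{k',q'}))$, and by Proposition~\ref{colourHom} the analogous identity holds for the colour graphs. Since $k'/q'<k/q$ forces $\lceil k'/q'\rceil\le r$, the inductive hypothesis (supplemented, where the parent falls outside the hypothesis $k'\ge 3(q'-1)+1$, by the cruder bound of Theorem~\ref{col}) shows $G_{k',q'}$, and hence $G_{k,q}-0$, is $s$-mixing for all $s\ge r+1$.

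It then remains to \emph{lift} mixing from $G_{k,q}-0$ to $G_{k,q}$, and this is the heart of the argument. Given two $s$-colourings $f,g$ of $G_{k,q}$, their restrictions to $G_{k,q}-0$ are joined by a path of single-vertex recolourings in $\mathscr{C}_s(G_{k,q}-0)$; I would carry vertex $0$ along this path, parking it at a suitable colour and moving it only when some neighbour $w\in N(0)=\{q,\dots,k-q\}$ is recoloured to a value conflicting with $0$. The obstruction is that $0$ has large degree $k-2q+1$, so the naive degree criterion behind Lemma~\ref{degree} would demand $s\ge k-2q+3=col(G_{k,q})+1$, which is far larger than $r+1$; one cannot assume a free colour is always present at $0$. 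Instead I would analyse the available set of Eq.~\eqref{eqn:available} together with the fact that the non-neighbours of $0$ occupy only the short arc $\{1,\dots,q-1\}\cup\{k-q+1,\dots,k-1\}$ of $2(q-1)$ vertices, and first reconfigure the colours near $0$ within the current component so as to liberate a colour for it. This is exactly where the hypothesis $k\ge 3(q-1)+1$ should enter: it guarantees the neighbourhood arc is long enough, relative to the maximum size $q$ of a colour class, that the single spare colour furnished by $s\ge r+1$ has room to act.

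The main obstacle I anticipate is making this lifting step fully rigorous: one must exhibit an explicit strategy for recolouring $0$ as its neighbours change, certify at every step that a legal move exists, and confirm that $k\ge 3(q-1)+1$ is precisely the threshold rendering the spare colour sufficient. By contrast, the reduction to the lower parent and the base case are routine consequences of the machinery of Section~\ref{MixHom} (notably Lemma~\ref{relpr} and Corollary~\ref{folds}); the delicate combinatorics all concentrate in the neighbourhood of the deleted vertex.
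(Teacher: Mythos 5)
First, a point of orientation: the paper does not actually prove Lemma~\ref{ceil} --- it is imported from~\cite{BrewsterNoel} and merely restated --- so there is no in-paper argument to compare against, and your proposal has to stand on its own. Its outer shell is sound: reducing $G_{k,q}-0$ to the lower parent $G_{k',q'}$ via Proposition~\ref{d=1}, Corollary~\ref{folds} (with the host $K_s$ dismantled trivially), and Proposition~\ref{colourHom} is a correct use of the Section~\ref{MixHom} machinery, and the base case $q=1$ is classical. (Your worry about the parent escaping the hypothesis is unfounded --- a short computation with $kq'-k'q=1$ shows that $k\geq 3(q-1)+1$ forces $k'\geq 3(q'-1)+1$ when $q\geq 2$ --- which is just as well, since the fallback you offer, Theorem~\ref{col}, only yields mixing for $s\geq k'-2q'+3$ and would not rescue the induction.)

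The genuine gap is the lifting step, and you have in effect conceded it. Everything reduces to: given that $G_{k,q}-0$ is $s$-mixing for $s=\lceil k/q\rceil+1$, show that $G_{k,q}$ is. Vertex $0$ has degree $k-2q+1$, so the only lifting tool available, Lemma~\ref{degree}, needs $s\geq k-2q+3$, which for $q\geq2$ vastly exceeds $\lceil k/q\rceil+1$; you correctly identify this, but then replace the missing argument with the hope that one can ``reconfigure the colours near $0$ \dots so as to liberate a colour.'' No mechanism is supplied: with only $\lceil k/q\rceil+1$ colours, $k-2q+1$ neighbours of $0$, and colour classes of size at most $q$, the neighbourhood $N(0)$ typically occupies all but two or three colours, so whether a second colour can always be freed for $0$ --- and freed repeatedly, at every step of an arbitrary recolouring path in $\mathscr{C}_s(G_{k,q}-0)$ --- is precisely the content of the lemma, not a detail to be filled in later. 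Nor is it clear that $k\geq 3(q-1)+1$ enters where you say it does: its natural role is to force every independent set (hence every colour class) of $G_{k,q}$ to lie in an arc of at most $q$ consecutive vertices, which points to a direct argument on the arc structure of colour classes rather than to peeling off a vertex and descending through lower parents. As written, the proposal is a plausible programme whose central combinatorial step --- the only step that uses the hypothesis --- is left open.
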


The following proposition provides a general upper bound on $\mathfrak{M}_c(G_{k,q})$.

\begin{prop}
\label{mMax}
We have
\[\mathfrak{M}_c(G_{k,q})\leq\max\left\{\frac{k+1}{2}, \left\lceil\frac{k}{q}\right\rceil+1\right\}.\]
\end{prop}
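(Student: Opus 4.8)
The plan is to apply Theorem~\ref{v+1/2} to the specific graph $G_{k,q}$, since that theorem already gives the bound $\mathfrak{M}_c(G) \leq \max\{(|V(G)|+1)/2, \mathfrak{M}(G)\}$ for any graph $G$. Setting $G = G_{k,q}$, I note that $|V(G_{k,q})| = k$, so the first term becomes exactly $(k+1)/2$, matching the first term in the claimed bound. Thus the entire proposition reduces to controlling the second term: it suffices to show $\mathfrak{M}(G_{k,q}) \leq \lceil k/q \rceil + 1$, which would let me replace $\mathfrak{M}(G_{k,q})$ by $\lceil k/q \rceil + 1$ inside the maximum and conclude.

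The tool for bounding $\mathfrak{M}(G_{k,q})$ is Lemma~\ref{ceil}, which states that if $k \geq 3(q-1)+1$, then $\mathfrak{M}(G_{k,q}) \leq \lceil k/q \rceil + 1$. So the first thing I would do is check whether the hypothesis $k \geq 3(q-1)+1$ holds, or else argue that the cases where it fails are already absorbed by the first term $(k+1)/2$ of the maximum. The hypothesis $k \geq 3q - 2$ is equivalent to $k/q \geq 3 - 2/q$, which is a mild condition; when $q$ is small or $k/q$ is large it is automatic, but I should verify that whenever it fails we have $(k+1)/2 \geq \mathfrak{M}(G_{k,q})$ anyway, perhaps using the trivial bound $\mathfrak{M}(G_{k,q}) \leq |V(G_{k,q})| + 1 = k+1$ combined with the fact that small cases force $q$ large relative to $k$.

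The main obstacle I anticipate is exactly this boundary analysis: reconciling the two terms of the maximum in the regime where Lemma~\ref{ceil} does not directly apply, i.e. when $k < 3(q-1)+1$. In that range $q$ is comparatively large, so $\lceil k/q \rceil$ is small (close to $2$ or $3$), and I expect the term $(k+1)/2$ to dominate and carry the bound. A clean way to handle this uniformly is to observe that $\max\{(k+1)/2, \mathfrak{M}(G_{k,q})\} \leq \max\{(k+1)/2, \lceil k/q\rceil+1\}$ holds precisely because either Lemma~\ref{ceil} supplies $\mathfrak{M}(G_{k,q}) \leq \lceil k/q\rceil + 1$ directly, or else the failure of its hypothesis forces $(k+1)/2$ to be at least as large as any reasonable bound on $\mathfrak{M}(G_{k,q})$. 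So I would structure the argument as a short case split on whether $k \geq 3(q-1)+1$, invoking Lemma~\ref{ceil} in the main case and the dominance of $(k+1)/2$ in the degenerate case, then feed the result into Theorem~\ref{v+1/2} to finish.
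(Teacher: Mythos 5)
Your overall strategy is exactly the paper's: feed $|V(G_{k,q})|=k$ into Theorem~\ref{v+1/2} and then split on whether the hypothesis $k\geq 3(q-1)+1$ of Lemma~\ref{ceil} holds. The first branch is fine. But the second branch --- the case $k\leq 3(q-1)$, which you correctly identify as the main obstacle --- is left unresolved, and the tool you propose for it does not work. The bound $\mathfrak{M}(G_{k,q})\leq |V(G_{k,q})|+1=k+1$ is useless here, since $k+1$ is strictly larger than $(k+1)/2$; ``small cases force $q$ large relative to $k$'' does not by itself convert $k+1$ into something at most $(k+1)/2$. So as written the argument has a genuine gap: you never exhibit an actual bound on $\mathfrak{M}(G_{k,q})$ that is dominated by $(k+1)/2$ in the degenerate regime.

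The missing ingredient is Theorem~\ref{col}: $\mathfrak{M}(G_{k,q})\leq col(G_{k,q})+1=\Delta(G_{k,q})+2=k-2q+3$, since $G_{k,q}$ is $(k-2q+1)$-regular. When $k\leq 3(q-1)$ one necessarily has $q\geq 3$ (because $2q\leq k\leq 3q-3$), hence $k\leq 3q-3<4q-5$, which is exactly the inequality equivalent to $k-2q+3<\tfrac{k+1}{2}$. This closes the degenerate case. (The paper also observes, in the other branch with $q\geq 3$, that $(k+1)/2\geq\lceil k/q\rceil+1$ outright because $k\geq 7$ there, so it actually proves the cleaner claim that $(k+1)/2\geq\mathfrak{M}(G_{k,q})$ whenever $q\geq 3$; but for the stated proposition your weaker case structure suffices once the colouring-number bound is inserted.)
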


\begin{proof}
By Theorem~\ref{v+1/2}, we have that 
\begin{equation}\label{v+1/2eqn}\mathfrak{M}_c(G_{k,q})\leq \max\left\{\frac{k+1}{2},\mathfrak{M}(G_{k,q})\right\}.\end{equation} 

In the case that $q\leq 2$, we have $k\geq 3(q-1)+1$ trivially and so $\mathfrak{M}(G_{k,q})\leq \left\lceil\frac{k}{q}\right\rceil+1$ 
by Lemma~\ref{ceil}. Thus, the result follows by Eq.~\eqref{v+1/2eqn} in this case. For $q\geq3$, the desired bound can be deduced 
from the following claim.

\begin{claim}
If $q\geq3$, then $\frac{k+1}{2}\geq\mathfrak{M}(G_{k,q})$.
\end{claim}

We divide the proof into two cases.

\begin{case}
Suppose that $k\geq 3(q-1)+1$.
\end{case}

By combining inequalities $q\geq3$ and $k\geq 3(q-1)+1$, we see that $k\geq7$. This implies that \[\frac{k+1}{2}\geq\frac{k+5}{3}\geq\left\lceil\frac{k}{3}\right\rceil+1\geq\left\lceil\frac{k}{q}\right\rceil+1\geq\mathfrak{M}(G_{k,q})\]
by Lemma~\ref{ceil}.

\begin{case}
Suppose that $k\leq 3(q-1)$.
\end{case}

In this case, we have $k\leq 3(q-1) < 4q-5$ which implies that
\[\frac{k+1}{2} > k-2q+3=\Delta(G_{k,q})+2=col(G_{k,q})+1\geq\mathfrak{M}(G_{k,q})\]
by Theorem~\ref{col}. This completes the proof of the claim and the proposition.
\end{proof}

The next theorem follows from Theorem~\ref{coreExt}, Proposition~\ref{mMax}, and the fact that 
$G_{k,q}$ is a core if $\gcd(k,q) = 1$.  (It is easy to see that the core of $G_{k,q}$ must be
a circular clique homomorphically equivalent to $G_{k,q}$.  By minimality of the core,
the assumption $\gcd(k,q)=1$ ensures that $G_{k,q}$ is itself a core.)

\begin{thm}\label{thm:extcirc}
For $k\geq2q$ and $\gcd(k,q)=1$, let $X$ be a $(k,q)$-colourable graph containing disjoint copies $X_1,X_2,\dots,X_t$ 
each isomorphic to $G_{k,q}$ and suppose that $\frac{k'}{q'}\geq \max\left\{\frac{k+1}{2}, \left\lceil\frac{k}{q}\right\rceil+1\right\}$. 
Then there exists a distance $d$ such that if $f_i:X_i\to G_{k',q'}$ is a homomorphism for $i=1,2,\dots,t$ and $d_{X}(X_i,X_j)\geq d$ 
for $i\neq j$, then the precolouring $f_1\cup\cdots\cup f_t$ extends to a $(k',q')$-colouring of $X$.
\end{thm}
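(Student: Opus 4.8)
The plan is to assemble the theorem from the three ingredients explicitly flagged in the sentence preceding the statement: Theorem~\ref{coreExt} (which extends precolourings on disjoint copies of the core, provided the relevant homomorphism graph is connected), Proposition~\ref{mMax} (which bounds $\mathfrak{M}_c(G_{k,q})$), and the core property of $G_{k,q}$ when $\gcd(k,q)=1$. First I would verify that $G := G_{k,q}$ really is the core of each $X_i$ and of $X$. Since each $X_i$ is isomorphic to $G_{k,q}$ and $X$ is $(k,q)$-colourable, $\mathrm{core}(X)$ is homomorphically equivalent to $G_{k,q}$; the parenthetical remark in the excerpt shows that $\gcd(k,q)=1$ forces $G_{k,q}$ itself to be a core, so $\mathrm{core}(X) \cong G_{k,q}$. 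This is exactly the hypothesis required to invoke Theorem~\ref{coreExt} with $H = G_{k',q'}$.

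Next I would translate the numerical hypothesis on $k'/q'$ into the connectivity of $\mathscr{H}_{k',q'}(G_{k,q})$. By Proposition~\ref{mMax} we have
\[
\mathfrak{M}_c(G_{k,q}) \le \max\left\{\frac{k+1}{2},\ \left\lceil\frac{k}{q}\right\rceil+1\right\}.
\]
The assumption $\frac{k'}{q'}\ge \max\left\{\frac{k+1}{2}, \left\lceil\frac{k}{q}\right\rceil+1\right\}$ therefore places $k'/q'$ at or above the circular mixing threshold of $G_{k,q}$, so $G_{k,q}$ is $(k',q')$-mixing, i.e. $\mathscr{C}_{k',q'}(G_{k,q})$ is connected. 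By Proposition~\ref{colourHom} (applicable since $G_{k,q}$ is loop-free) the colour graph and the homomorphism graph $\mathscr{H}_{G_{k',q'}}(G_{k,q})$ have the same components, so $\mathscr{H}_{k',q'}(G_{k,q})$ is connected as well, which is precisely the connectivity hypothesis of Theorem~\ref{coreExt}.

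With both the core hypothesis and the connectivity hypothesis in hand, I would apply Theorem~\ref{coreExt} directly, taking $H = G_{k',q'}$ and setting the distance $d := 2\,\mathrm{rad}\big(\mathscr{H}_{k',q'}(G_{k,q})\big)$. The theorem then guarantees that whenever $d_X(X_i,X_j)\ge d$ for all $i\ne j$, any collection of precolourings $f_i: X_i \to G_{k',q'}$ extends to a $(k',q')$-colouring of all of $X$. Since $\mathscr{H}_{k',q'}(G_{k,q})$ is a finite connected graph, its radius is finite and $d$ is a well-defined positive integer, matching the ``there exists a distance $d$'' phrasing of the statement.

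\textbf{Main obstacle.}
The subtlety I expect to watch is the boundary between the strict inequality implicit in the mixing threshold $\mathfrak{M}_c$ (Definition~\ref{defn:mt} quantifies over all $k/q\ge r$) and the non-strict ``$\ge$'' in the hypothesis; I would confirm that $k'/q'$ being at least the value on the right-hand side — not merely strictly greater — still guarantees $(k',q')$-mixing, which follows because Proposition~\ref{mMax} bounds $\mathfrak{M}_c$ (the threshold above which mixing holds \emph{for all} larger ratios) rather than $\mathfrak{m}_c$. The remaining work is bookkeeping: checking that $G_{k,q}$ is loop-free so that Proposition~\ref{colourHom} applies, and that $\mathrm{rad}$ is finite. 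None of these steps is computationally heavy; the content of the theorem lies almost entirely in the earlier results, and this proof is essentially their composition.
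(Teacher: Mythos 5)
Your proposal is correct and follows exactly the route the paper takes: the paper's entire justification is the single sentence preceding the statement, citing Theorem~\ref{coreExt}, Proposition~\ref{mMax}, and the fact that $G_{k,q}$ is a core when $\gcd(k,q)=1$, and your argument is a faithful (and more detailed) expansion of that composition, including the passage from $\mathscr{C}_{k',q'}$ to $\mathscr{H}_{k',q'}$ via Proposition~\ref{colourHom} and the choice $d = 2\,\mathrm{rad}(\mathscr{H}_{k',q'}(G_{k,q}))$. The boundary subtlety you flag (non-strict ``$\geq$'' versus the infimum in the definition of $\mathfrak{M}_c$) is present in the paper's own one-line proof as well, so it is not a defect of your reconstruction.
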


We remark on the importance of the condition $\gcd(k,q)=1$ in the theorem to ensure $G_{k,q}$ is a core.  Consider the graph
$X := G_{6,2} + x$, where $x$ joins $\{ 0, 1, 4, 5 \}$ in $G_{6,2}$.  
The $4$-colouring $0, 1, 1 , 2, 2, 3$ of $G_{6,2}$ does not extend to a $4$-colouring
of the entire graph, despite $X$ being $(6,2)$-colourable and $G_{6,2}$ being $4$-mixing.

\section{Examples and discussion}
\label{Examples}

We show that there are graphs which either attain or nearly attain the bounds on $\mathfrak{M}_c$ and $\mathfrak{m}_c$ given by Theorems~\ref{col(G)Bound},~\ref{2Delta},~\ref{v+1/2} and~\ref{omega}. First, we observe that Theorems~\ref{v+1/2} and~\ref{omega} are sharp for cliques of size $3$ or greater.

\begin{prop}
We have
\[\mathfrak{m}_c(K_r)=\mathfrak{M}_c(K_r)=\left\{\begin{array}{ll} 	r & \mbox{ if }r\leq2, \\
																								r+1 & \mbox{ otherwise. }\end{array}\right.\]
\end{prop}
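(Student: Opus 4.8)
The plan is to separate the small cases $r\leq 2$ from the main range $r\geq 3$, and in the main range to pin both quantities to $r+1$ by sandwiching them between a lower bound on $\mathfrak{m}_c$ and an upper bound on $\mathfrak{M}_c$. The glue for the sandwich is the elementary fact that $\mathfrak{m}_c(G)\leq\mathfrak{M}_c(G)$ for every graph $G$: if $G$ is $(k,q)$-mixing for all $k/q\geq r$, then every such ratio lies in the set defining $\mathfrak{m}_c$, so taking ratios arbitrarily close to $r$ shows $\mathfrak{m}_c(G)\leq\mathfrak{M}_c(G)$. I would state this monotonicity once at the start.

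For the small cases I would argue directly. Since $K_1$ is edgeless, every map into any $G_{k,q}$ is a homomorphism and $\mathscr{C}_{k,q}(K_1)$ is complete, hence connected; thus $K_1$ is $(k,q)$-mixing at the minimum admissible ratio and $\mathfrak{m}_c(K_1)=\mathfrak{M}_c(K_1)=\chi_c(K_1)=1$. For $K_2$, which is simultaneously a tree and the complete bipartite graph $K_{1,1}$, Proposition~\ref{K_mmtrees} gives $\mathfrak{m}_c(K_2)=\mathfrak{M}_c(K_2)=2$. This settles the value $r$ for $r\leq 2$.

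For $r\geq 3$ I would handle the two bounds independently. For the \emph{lower} bound, $K_r$ is non-bipartite with $\omega(K_r)=r$, so Theorem~\ref{omega} yields $\mathfrak{m}_c(K_r)\geq\max\{4,\,r+1\}=r+1$, the maximum equaling $r+1$ precisely because $r\geq 3$. For the \emph{upper} bound, I would use $|V(K_r)|=r$ and $col(K_r)=r$: Theorem~\ref{col} gives $\mathfrak{M}(K_r)\leq col(K_r)+1=r+1$, and substituting this into Theorem~\ref{v+1/2} gives $\mathfrak{M}_c(K_r)\leq\max\{(r+1)/2,\,\mathfrak{M}(K_r)\}\leq\max\{(r+1)/2,\,r+1\}=r+1$. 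Combining with the monotonicity above produces the chain $r+1\leq\mathfrak{m}_c(K_r)\leq\mathfrak{M}_c(K_r)\leq r+1$, forcing equality throughout and giving the claimed common value $r+1$.

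The load-bearing step is this final sandwich: neither bound alone determines both invariants, and it is the combination of the $\omega$-based lower bound on $\mathfrak{m}_c$ with the $(|V|+1)/2$-versus-$\mathfrak{M}$ upper bound on $\mathfrak{M}_c$, tied together by $\mathfrak{m}_c\leq\mathfrak{M}_c$, that collapses the interval to a point. There is essentially no hard computation; the only point requiring care is the degenerate case $K_1$, where $\chi_c$ and the definition of $G_{k,q}$ behave atypically, so I would keep that case self-contained rather than trying to route it through the general bounds.
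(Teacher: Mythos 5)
Your proposal is correct and follows essentially the same route as the paper: the $r\leq 2$ cases are dispatched via Proposition~\ref{K_mmtrees} (the paper calls them an easy exercise), and for $r\geq 3$ the paper uses exactly the sandwich $r+1=\omega(K_r)+1\leq\mathfrak{m}_c(K_r)\leq\mathfrak{M}_c(K_r)\leq\max\{(r+1)/2,\mathfrak{M}(K_r)\}=r+1$ from Theorems~\ref{omega} and~\ref{v+1/2} together with $\mathfrak{M}(K_r)=r+1$. Your only deviations are cosmetic: you derive the needed upper bound $\mathfrak{M}(K_r)\leq r+1$ from Theorem~\ref{col} rather than citing the known exact value, and you treat $K_1$ separately, which is a reasonable extra care.
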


\begin{proof}
As mentioned in the proof of Proposition~\ref{K_mmtrees}, the case for $r\leq2$ is an easy exercise. For $r\geq3$ we have
\[r+1=\omega(K_r)+1\leq \mathfrak{m}_c(K_r)\leq \mathfrak{M}_c(K_r)\leq \max\left\{\frac{r+1}{2},\mathfrak{M}(K_r)\right\}=r+1\]
by 
Theorems~\ref{v+1/2} and~\ref{omega}, and the fact that $\mathfrak{M}(K_r)=r+1$. Thus, equality must hold throughout. 
\end{proof}

In a similar fashion, it can be seen that Theorems~\ref{2Delta} and~\ref{omega} are sharp for odd cycles; however, even cycles behave differently. 

\begin{prop}
We have
\[\mathfrak{M}_c(C_r) = \left\{		\begin{array}{ll} 	2 		& \text{if $r=4$},\\
														4 		& \text{otherwise},			\end{array}\right.\]
														and
\[\mathfrak{m}_c(C_r) = \left\{		\begin{array}{ll} 	2 		& \text{if $r$ is even},\\
														4 	& \text{otherwise}.			\end{array}\right.\]
\end{prop}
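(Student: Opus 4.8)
The plan is to treat three regimes separately: odd cycles, the $4$-cycle, and even cycles of length at least $6$. The two extreme cases are immediate. For odd $r$ the graph $C_r$ is non-bipartite with $\omega(C_r)=2$, so Theorem~\ref{omega} gives $\mathfrak{m}_c(C_r)\geq 4$, while $\Delta(C_r)=2$ and Theorem~\ref{2Delta} give $\mathfrak{M}_c(C_r)\leq 4$; since $\mathfrak{m}_c\leq\mathfrak{M}_c$ always (any ratio above $\mathfrak{M}_c$ is a mixing ratio), both equal $4$. For $r=4$ we have $C_4\cong K_{2,2}$, so $\mathfrak{m}_c(C_4)=\mathfrak{M}_c(C_4)=2$ by Proposition~\ref{K_mmtrees}. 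It remains to handle even $r\geq 6$, where I must show $\mathfrak{M}_c(C_r)=4$ and $\mathfrak{m}_c(C_r)=2$.

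For the threshold, Theorem~\ref{2Delta} again gives $\mathfrak{M}_c(C_r)\leq 2\Delta(C_r)=4$, so the real content is the lower bound $\mathfrak{M}_c(C_r)\geq 4$. I would exhibit non-mixing pairs whose ratio tends to $4$ from below, using the winding invariant $\sigma$ of Section~\ref{lb} applied to the cycle $C=C_r$ itself. Take $(k,q)=(4q-1,q)$, which is reduced with $k/q=4-1/q\to 4^-$. Since $k/q<4$, Lemma~\ref{<4} shows every $(k,q)$-colouring of $C_r$ is constricting, so by Proposition~\ref{samesigma} the value of $\sigma$ is constant on each component of $\mathscr{C}_{k,q}(C_r)$, and by Proposition~\ref{kdividessigma} we have $\sigma=wk$ for an integer winding number $w$. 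A colouring realising a prescribed $w$ exists precisely when $wk$ is a sum of $r$ residues each in $[q,k-q]$, i.e. when $w$ is an integer lying in $[\,rq/k,\; r(k-q)/k\,]$. As $q\to\infty$ this interval tends to $[r/4, 3r/4]$, of length $r/2\geq 3$, so for $q$ large it contains at least two integers. Two colourings with different winding numbers then lie in different components of $\mathscr{C}_{k,q}(C_r)$, so $C_r$ is not $(k,q)$-mixing; letting $q\to\infty$ yields $\mathfrak{M}_c(C_r)\geq 4$.

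For $\mathfrak{m}_c(C_r)=2$, note $\chi_c(C_r)=2$ forces $\mathfrak{m}_c(C_r)\geq 2$, so I need mixing pairs whose ratio tends to $2$. I would use $(k,q)=(2t+1,t)$, which is reduced with ratio $2+1/t$ and lower parent $(2,1)$, and note $G_{2t+1,t}\cong C_{2t+1}$. Choosing $t\geq r/2$ makes $2t+1>r$, so every $(2t+1,t)$-colouring of $C_r$ is non-surjective and $C_r$ is trivially $(2t+1,t)$-flexible. Lemma~\ref{relpr} then gives $c(\mathscr{H}_{2t+1,t}(C_r))\leq c(\mathscr{H}_{2,1}(C_r))$, and $\mathscr{H}_{K_2}(C_r)$ consists of the two (frozen) proper $2$-colourings of the even cycle, so this count is at most $2$; by Proposition~\ref{colourHom} the same bound holds for $\mathscr{C}_{2t+1,t}(C_r)$. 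It therefore suffices to join the two $2$-colourings by a recolouring path, which collapses the bound to a single component and proves $(2t+1,t)$-mixing; as $2+1/t\to 2$, this gives $\mathfrak{m}_c(C_r)=2$.

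The \emph{main obstacle} is exactly this last step: connecting the two $2$-colourings inside $\mathscr{C}_{2t+1,t}(C_r)$, and here I would exploit the oddness of $C_{2t+1}$. View a $2$-colouring as a map of $C_r$ onto a single edge of $C_{2t+1}$ with each colour class monochromatic. A vertex whose two neighbours carry a common colour $c$ may be recoloured between the two neighbours of $c$; flipping an entire colour class in this way slides the image edge by one position around $C_{2t+1}$ while keeping each class monochromatic. Performing $2t+1$ such slides returns the image to the original edge, but since $2t+1$ is odd the two classes have been recoloured an unequal number of times, so their roles are interchanged and the path terminates at the opposite $2$-colouring. Checking that every intermediate map is a proper $(2t+1,t)$-colouring is routine, since $C_r$ is bipartite and both relevant colours are adjacent to the shared neighbour colour. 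This supplies the required path and completes the even case.
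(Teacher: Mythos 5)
Your proposal is correct, and it handles the odd case and $r=4$ exactly as the paper does, but for even $r\geq 6$ it takes a genuinely different route on both halves. For the lower bound $\mathfrak{M}_c(C_r)\geq 4$, the paper writes down one explicit $(4q-1,q)$-colouring with $\sigma(f)\neq \frac{r(4q-1)}{2}$ and invokes Proposition~\ref{sigma+sigma}, whereas you argue by counting: the achievable winding numbers form the set of integers in $[\,rq/k,\,r(k-q)/k\,]$, an interval whose length exceeds $2$ for $r\geq 6$, so two distinct values of $\sigma$ are always realized. Both arguments rest on the same invariant (Lemma~\ref{<4} plus Propositions~\ref{samesigma} and~\ref{kdividessigma}); yours avoids an ad hoc construction, the paper's gives non-mixing for every $q\geq 1$ rather than only in the limit (which is all the threshold needs). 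For $\mathfrak{m}_c(C_r)=2$, the paper works with $(2q+1,q)$ for $q\geq r/4$ and directly classifies the colourings via the multiset of increments $\tau(f,i)\in\{q,q+1\}$, leaving the final connectivity check to the reader; you instead take $t\geq r/2$, use trivial flexibility and Lemma~\ref{relpr} to reduce the component count to $c(\mathscr{H}_{K_2}(C_r))=2$, and then supply an explicit ``rotating edge'' path that exploits the oddness of $G_{2t+1,t}\cong C_{2t+1}$ to interchange the two $2$-colourings. This is arguably more complete than the paper's closing sentence, at the cost of a slightly weaker range of $q$ (irrelevant to the infimum). One sentence worth adding: to conclude connectivity from ``$c\leq 2$ and a path between the two $2$-colourings,'' you should note that these two colourings (as maps onto the image edge of the section $K_2\to G_{2t+1,t}-0$) lie in \emph{different} components of the subgraph induced by colourings avoiding $0$, which follows because the retraction of Lemma~\ref{fold} sends them to the two distinct components of $\mathscr{H}_{K_2}(C_r)$; with that observation your argument is airtight.
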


\begin{proof}
The fact that $\mathfrak{m}_c(C_4)=\mathfrak{M}_c(C_4)=2$ follows from Proposition~\ref{K_mmtrees} since $C_4\simeq K_{2,2}$. For $r\geq 5$, we have $\mathfrak{M}_c(C_r)\leq 4$ by Theorem~\ref{v+1/2}. If $r$ is odd, then we see that $\mathfrak{m}_c(C_r)=\mathfrak{M}_c(C_r)=4$ by Theorem~\ref{omega}. So, we are done if we can prove that, for even $r\geq 6$,
\begin{enumerate}[(a)]
\item $C_r$ is $(2q+1,q)$-mixing for all $q\geq \frac{r}{4}$, and\label{2q+1}
\item $C_r$ is not $(4q-1,q)$-mixing for any $q\geq1$.\label{4q-1} 
\end{enumerate}
In what follows, label the vertices of $C_r$ by $v_0,\dots, v_{r-1}$ in cyclic order and define $\tau$ and $\sigma$ as in Definition~\ref{tausigma}. 

Let us prove (\ref{2q+1}). Note that, for any $(2q+1,q)$-colouring $f$ of $C_r$, we must have $\tau(f,i)\in \{q,q+1\}$ for all $i$. Let $I_q(f):=\{i\in \{0,\dots,r-1\}: \tau(f,i)=q\}$ and $I_{q+1}(f):=\{0,\dots,r-1\}- I_{q}(f)$. We prove the following claim.

\begin{claim}
For every $(2q+1,q)$-colouring $f$ of $C_r$, we have $|I_q(f)|=|I_{q+1}(f)|=r/2$. 
\end{claim}

Clearly, 
\[\sigma(f) = |I_q(f)|q + |I_{q+1}(f)|(q+1) = rq + |I_{q+1}(f)|.\] 
Proposition 5.8 implies that $2q+1$ divides $\sigma(f)$. So, we let $a$ be an integer such that $a(2q+1) = rq + |I_{q+1}(f)|$. We obtain
\[|I_{q+1}(f)| = (2a-r)q +a.\]
If $a\geq r/2+1$, then $|I_{q+1}(f)| \geq 2q + (r/2+1)>r$ since $q\geq r/4$, which is a contradiction. On the other hand, if $a\leq r/2-1$, then  $|I_{q+1}(f)|\leq -2q + (r/2 - 1)<0$ which is, again, a contradiction. Therefore, $|I_{q+1}(f)| = a=r/2$, which proves the claim.

Therefore, every $(2q+1,q)$-colouring of $C_r$ is uniquely determined by specifying the value of $f(v_0)$ and the $r/2$ values of $i$ which satisfy $\tau(f,i)=q$. It is now not hard to see that all $(2q+1,q)$-colourings of $C_r$ can be generated by the mixing process.

Now, let us prove (\ref{4q-1}). By Lemma~\ref{<4} and Propositions~\ref{samesigma} and~\ref{sigma+sigma}, 
it suffices to exhibit a $(4q-1,q)$-colouring $f$ of $C_r$ such that $\sigma(f)\neq \frac{r(4q-1)}{2}$. We define
\[f(v_i) = \left\{		\begin{array}{ll} 	iq 		& \text{if $0\leq i\leq 3$},\\
										1 	& \text{if $i\geq4$ and $i$ is even},\\
										q+1 & \text{if $i\geq 4$ and $i$ is odd}.			\end{array}\right.\]
It is easily observed that $f$ is a $(4q-1,q)$-colouring. We calculate
\[\sigma(f) = 5q + \left(\frac{r-6}{2}\right)(4q-1) + (3q-2) = \left(\frac{r-2}{2}\right)(4q-1).\]
This completes the proof. 
\end{proof}

Next, we give a construction which shows that for any $d\geq2$ there are graphs of maximum degree $d$ which show that Theorem~\ref{2Delta} is sharp and that Theorem~\ref{v+1/2} is nearly sharp. In what follows, given $d,q\geq2$ we let \[k:=(2q-1)d+1\]
and define a graph $F_{d,q}$ on vertex set $\{0,1,\dots,k-1\}$ such that 
\[N(i)=\{i+q,i+q+(2q-1), \dots,i+q+(d-1)(2q-1)\}\]
for $0\leq i\leq k-1$. We remark that $F_{d,q}$ is a $d$-regular subgraph of $G_{k,q}$. The key feature of $F_{d,q}$ is that the identity map $\varphi$ on $\{0,1,\dots,k-1\}$ induces a frozen homomorphism $\varphi:F_{d,q}\to G_{k,q}$. In particular, this proves that $\mathfrak{M}_c(F_{d,q})\geq\frac{k}{q}$. 

\begin{prop}
\label{2DeltaBest}
For any $\varepsilon>0$ and $d\geq2$ there exists a $d$-regular graph $G$ with $\mathfrak{M}_c(G)\geq2d-\varepsilon$. 
\end{prop}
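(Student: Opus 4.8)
The plan is to use the family $F_{d,q}$ already constructed, whose key property is that the identity map induces a frozen homomorphism $\varphi : F_{d,q} \to G_{k,q}$ with $k = (2q-1)d+1$. Since a frozen homomorphism is an isolated reflexive vertex of $\mathscr{H}_{k,q}(F_{d,q})$, the colour graph $\mathscr{C}_{k,q}(F_{d,q})$ is disconnected (assuming $F_{d,q}$ has more than one $(k,q)$-colouring, which it does), so $F_{d,q}$ is not $(k,q)$-mixing. This gives the lower bound $\mathfrak{M}_c(F_{d,q}) \geq k/q = \frac{(2q-1)d+1}{q}$, as already noted in the text preceding the statement.

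The remaining work is purely arithmetic: I would take $G := F_{d,q}$ and show that for $q$ chosen large enough, the quantity $\frac{(2q-1)d+1}{q}$ exceeds $2d - \varepsilon$. Observe that
\[
\frac{(2q-1)d+1}{q} = 2d - \frac{d-1}{q}.
\]
Thus to guarantee $\mathfrak{M}_c(G) \geq 2d - \varepsilon$ it suffices to choose $q \geq 2$ large enough that $\frac{d-1}{q} \leq \varepsilon$, i.e. $q \geq \frac{d-1}{\varepsilon}$. For such $q$ the graph $F_{d,q}$ is the desired $d$-regular graph.

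I would also address the edge cases: if $d = 1$ the graph is a perfect matching and the claim is handled separately (or excluded by the hypothesis $d \geq 2$), and one should note $d \geq 2$ is assumed so that $\frac{d-1}{q}$ is a genuine positive quantity; when $d=1$ the bound $2d-\varepsilon$ is below the trivial mixing threshold and nothing needs proving. The main (and only) conceptual input is the already-established frozen homomorphism, so the proof is short. The one point requiring a sentence of justification is that $F_{d,q}$ is genuinely $d$-regular and that $\mathfrak{M}_c$ is bounded below by $k/q$ precisely because the frozen colouring witnesses a disconnected colour graph at the ratio $k/q$; both of these follow directly from the preceding discussion, so there is no real obstacle, only bookkeeping to confirm the inequality $2d - \frac{d-1}{q} \geq 2d - \varepsilon$ holds for all sufficiently large $q$.
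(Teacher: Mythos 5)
Your proposal is correct and follows essentially the same route as the paper: both take $G = F_{d,q}$, use the frozen homomorphism $F_{d,q}\to G_{k,q}$ with $k=(2q-1)d+1$ to get $\mathfrak{M}_c(F_{d,q})\geq k/q$, and let $q\to\infty$. Your explicit rewriting $\frac{(2q-1)d+1}{q}=2d-\frac{d-1}{q}$ and the choice $q\geq (d-1)/\varepsilon$ merely make the paper's limit statement quantitative.
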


\begin{proof}
We have that $F_{d,q}$ is $d$-regular and
\[\mathfrak{M}_c(F_{d,q})\geq\frac{k}{q}=\frac{(2q-1)d+1}{q}\to 2d \text{ as } q\to\infty.\]
The result follows. 
\end{proof}

\begin{prop}
For any $d \geq 3$ there exists a $d$-regular graph $G$ with $\mathfrak{M}_c(G)\geq\frac{|V(G)|}{2}>\mathfrak{M}(G)$. 
\end{prop}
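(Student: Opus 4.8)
The plan is to reuse the family $F_{d,q}$ constructed just above, specialised to $q=2$. Set $G=F_{d,2}$, which is $d$-regular on $k=3d+1$ vertices, so that $|V(G)|/2=(3d+1)/2=k/q$. The lower-bound half of the statement is then immediate from the remarks preceding Proposition~\ref{2DeltaBest}: the identity is a frozen homomorphism $F_{d,2}\to G_{3d+1,2}$, whence $\mathfrak{M}_c(F_{d,2})\geq k/q=|V(G)|/2$. The reason for taking $q=2$ rather than a general $q$ is that this is the unique value for which the frozen-colouring bound $k/q$ meets $|V|/2$; for $q\geq 3$ the quantity $k/q\to 2d$ stays bounded while $|V|/2=k/2$ grows, so no separation survives.

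For the upper bound on $\mathfrak{M}(G)$, observe that, being $d$-regular, $F_{d,2}$ satisfies $col(F_{d,2})=d+1$ (indeed $\delta=\Delta=d$), so Theorem~\ref{col} gives $\mathfrak{M}(F_{d,2})\leq col(F_{d,2})+1=d+2$. Comparing, $d+2<(3d+1)/2$ holds precisely when $d>3$. Hence for every $d\geq 4$ we obtain $\mathfrak{M}(F_{d,2})\leq d+2<|V(G)|/2\leq \mathfrak{M}_c(F_{d,2})$, and the proposition follows for these $d$.

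The hard part is the boundary case $d=3$, where $d+2=5=|V|/2$ and the degeneracy bound only yields $\mathfrak{M}(F_{3,2})\leq 5$; I would need to sharpen this to $\mathfrak{M}(F_{3,2})=4$. Here $F_{3,2}$ is the circulant $C_{10}(2,5)$, which decomposes into two disjoint $5$-cycles (the even and odd residues, joined along offset $2$) together with the perfect matching $i\sim i+5$. Since Theorem~\ref{col} already gives $k$-mixing for all $k\geq 5$, it suffices to prove that $F_{3,2}$ is $4$-mixing. The first step is to rule out frozen $4$-colourings: in a $3$-regular graph a $4$-colouring is frozen exactly when every vertex sees three distinct colours on its neighbourhood, so in particular every vertex of the even $5$-cycle would need its two cycle-neighbours coloured differently. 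A short counting argument shows this is impossible, since in any proper $4$-colouring of $C_5$ the condition $c_j\neq c_{j+2}$ for all $j$ together with properness forces all five colours to be distinct. Thus no $4$-colouring of $F_{3,2}$ is frozen.

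Having removed isolated vertices from $\mathscr{C}_4(F_{3,2})$, it remains to verify that the $4$-colour graph is connected, and I expect this to be the genuine obstacle. Note that at the ratio $k/q=4$ the constricting hypothesis of Lemma~\ref{<4} (which requires $k/q<4$) just fails, so the $\sigma$-obstruction of Section~\ref{lb} does not preclude mixing; this is at least consistent with $4$-mixing being possible. I would establish connectivity either by a direct case analysis exploiting the $C_5$-plus-matching structure, or, failing a transparent argument, by an exhaustive check over the $4$-colourings of this single $10$-vertex graph. Once $4$-mixing is confirmed we obtain $\mathfrak{M}(F_{3,2})=4<5=|V(F_{3,2})|/2\leq \mathfrak{M}_c(F_{3,2})$, completing the case $d=3$ and hence the proposition.
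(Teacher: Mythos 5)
Your argument for $d\ge 4$ is exactly the paper's: take $G=F_{d,2}$, which is $d$-regular on $k=3d+1$ vertices, use the frozen homomorphism $F_{d,2}\to G_{3d+1,2}$ to get $\mathfrak{M}_c(F_{d,2})\ge k/2=|V(G)|/2$, and use Theorem~\ref{col} to get $\mathfrak{M}(F_{d,2})\le col(F_{d,2})+1=d+2<(3d+1)/2$. You are also right that this chain degenerates at the boundary: for $d=3$ one has $d+2=(3d+1)/2=5$, so the strict middle inequality fails. The paper's own proof asserts $d+2<\tfrac{3d+1}{2}$ for all $d\ge 3$ without comment, so you have located a genuine defect in the published argument for the case $d=3$, not merely in your own.

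Your repair of the $d=3$ case, however, is not a proof. You correctly identify $F_{3,2}$ as the circulant on $\{0,\dots,9\}$ with connections $\{2,5,8\}$ (two $5$-cycles plus the matching $i\sim i+5$), and your argument that it admits no frozen $4$-colouring is sound: a frozen $4$-colouring of a $3$-regular graph forces the two cycle-neighbours of each vertex of a constituent $C_5$ to receive distinct colours, which together with properness would require five colours on that $C_5$. But the absence of frozen colourings only removes isolated vertices from $\mathscr{C}_4(F_{3,2})$; it does not show the remaining colourings form a single component, and you explicitly defer that step to ``a direct case analysis'' or ``an exhaustive check'' that you do not carry out. Since $k/q=4$ sits exactly where Lemma~\ref{<4} stops applying, none of the paper's machinery settles $4$-mixing of this one graph, so the needed claim $\mathfrak{M}(F_{3,2})=4$ remains unsupported. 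As written your proposal establishes the proposition only for $d\ge 4$; to cover $d=3$ you must actually perform the finite connectivity check on the $4$-colourings of this $10$-vertex graph (or invoke a Brooks-type reconfiguration theorem guaranteeing that all non-frozen $(\Delta+1)$-colourings of a connected graph with $\Delta\ge 3$ lie in one component), or else restate the proposition for $d\ge 4$.
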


\begin{proof}
We have that $F_{d,2}$ is a $d$-regular graph on $k=3d+1$ vertices. In particular, 
\[\mathfrak{M}(F_{d,2})\leq d+2<\frac{3d+1}{2}=\frac{k}{2}\leq\mathfrak{M}_c(F_{d,2})\]
by Theorem~\ref{col} and the fact there is a frozen homomorphism mapping $F_{d,2}\to G_{k,2}$. The result follows.  
\end{proof}

An example of~\cite{Connectedness} shows that for every integer $m\geq4$ there is a graph $H_m$ which satisfies the following:
\begin{itemize}
\item $\chi(H_m) = \mathfrak{M}(H_m) = m$,
\item $|V(H_m)| = 2m-1$, and
\item $H_m$ contains a clique on $m-1$ vertices. 
\end{itemize}
By applying Theorems~\ref{v+1/2} and~\ref{omega} to their example, we obtain the following.

\begin{prop}
\label{H_m}
Let $m\geq2$ be an integer. There exists a graph $H_m$ such that 
\[\mathfrak{M}_c(H_m)= \chi(H_m) = m\]
if and only if $m\neq 3$. 
\end{prop}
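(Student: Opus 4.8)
The plan is to prove the two directions of the biconditional separately, in each case using the graph $H_m$ from~\cite{Connectedness} together with its three listed properties ($\chi(H_m)=\mathfrak{M}(H_m)=m$, $|V(H_m)|=2m-1$, and $\omega(H_m)\geq m-1$). The whole argument amounts to sandwiching $\mathfrak{M}_c(H_m)$ between matching upper and lower bounds supplied by Theorems~\ref{v+1/2} and~\ref{omega}, with the cases $m=2$, $m=3$, and $m\geq4$ behaving differently.

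For the existence direction with $m\geq4$, I would first apply Theorem~\ref{v+1/2} to obtain the upper bound
\[\mathfrak{M}_c(H_m)\leq\max\left\{\frac{|V(H_m)|+1}{2},\mathfrak{M}(H_m)\right\}=\max\{m,m\}=m,\]
since $(|V(H_m)|+1)/2=(2m)/2=m$. For the matching lower bound, note that $H_m$ is non-bipartite because $\chi(H_m)=m\geq4>2$, so Theorem~\ref{omega} yields $\mathfrak{m}_c(H_m)\geq\max\{4,\omega(H_m)+1\}\geq\max\{4,m\}=m$. Since $\mathfrak{m}_c\leq\mathfrak{M}_c$, these bounds force $\mathfrak{M}_c(H_m)=m=\chi(H_m)$. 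For $m=2$ the example of~\cite{Connectedness} is unavailable, so I would instead take $H_2=K_2$ and invoke Proposition~\ref{K_mmtrees} (as $K_2$ is both a tree and a complete bipartite graph) to conclude $\mathfrak{M}_c(K_2)=2=\chi(K_2)$.

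For the impossibility at $m=3$, I would argue by contradiction: suppose some graph $G$ satisfies $\mathfrak{M}_c(G)=\chi(G)=3$. Then $G$ is non-bipartite, so Theorem~\ref{omega} gives $\mathfrak{m}_c(G)\geq4$; that is, $G$ is not $(k,q)$-mixing for any ratio with $\chi_c(G)\leq k/q<4$. Choosing the concrete witness $k/q=7/2$ (which satisfies $\chi_c(G)\leq\chi(G)=3\leq 7/2<4$) exhibits a ratio at least $3$ at which $G$ fails to mix, contradicting $\mathfrak{M}_c(G)=3$. Hence no graph with $\chi=3$ has circular mixing threshold $3$, so a graph realizing $\mathfrak{M}_c=\chi=m$ forces $m\neq3$, which is the `only if' direction.

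The point requiring the most care is the exact matching of the two bounds, which pivots entirely on the clique term of Theorem~\ref{omega}: for $m\geq4$ the quantity $\max\{4,\omega+1\}$ equals $\omega+1=m$ (using $\omega\geq m-1$) and dovetails with the upper bound $m$, whereas the floor value $4$ in that same bound is precisely what overshoots $m$ when $m=3$, producing the exception. I would also record the easy inequality $\mathfrak{m}_c\leq\mathfrak{M}_c$ and handle the infimum/rational-density subtlety by arguing through an explicit witness ratio (as with $7/2$ above, and symmetrically in the $m\geq4$ lower bound whenever $\chi_c<m$, the boundary case $\chi_c=m$ being immediate from the constraint $r\geq\chi_c$ in Definition~\ref{defn:mt}) rather than manipulating the infima abstractly.
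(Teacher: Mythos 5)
Your proof is correct and follows essentially the same route as the paper: the upper bound $\mathfrak{M}_c(H_m)\leq m$ from Theorem~\ref{v+1/2}, the matching lower bound from the $(m-1)$-clique via Theorem~\ref{omega}, the $m=3$ exclusion from the floor of $4$ in that same theorem, and $K_2$ with Proposition~\ref{K_mmtrees} for $m=2$. The only difference is that you spell out the infimum/witness-ratio details for the $m=3$ case, which the paper leaves implicit.
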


\begin{proof}
For $m\geq4$, define $H_m$ as above. By Theorem~\ref{v+1/2}, we have
\[\mathfrak{M}_c(H_m)\leq \max\left\{\frac{|V(H_m)|+1}{2}, \mathfrak{M}(H_m)\right\} = m.\]
Also, since $H_m$ contains a clique on $m-1$ vertices, we have $\mathfrak{M}_c(H_m)\geq m$ by Theorem~\ref{omega}. Thus, the result holds for $m\geq4$. 

Now, by Theorem~\ref{omega} there can be no such graph when $m=3$. For the case that $m=2$ (ie. bipartite graphs), see Proposition~\ref{K_mmtrees}.
\end{proof}

\subsection{Questions for future study}

Given the known properties $\chi_c$ and $\chi_{c,\ell}$, one might wonder if analogous results hold for $\mathfrak{m}_c$ and $\mathfrak{M}_c$. In particular, one may ask questions of the following types:
\begin{enumerate}
\item Is the circular mixing threshold (number) always rational?
\item Under what conditions is the circular mixing threshold (number) an integer?
\item For which graphs is the circular mixing threshold (number) attained? 
\item What is the relationship between the mixing threshold (number) and circular mixing threshold (number)?
\end{enumerate}
Let us elaborate on the fourth question. We find the following problems to be of interest. Proposition~\ref{2DeltaBest} shows that $\mathfrak{M}_c(G)/\mathfrak{M}(G)$ can as large as $2-\varepsilon$ for any $\varepsilon>0$. It is not known, however, if this ratio can be arbitrarily large.

\begin{ques}
Does there exist a real number $r$ such that $\mathfrak{M}_c(G)\leq r\mathfrak{M}(G)$ for every graph $G$? If so, what is the smallest such $r$?
\end{ques}

Recall that every graph $G$ satisfies $\left\lceil\chi_c(G)\right\rceil =\chi(G)$. However, the analogous statement for the mixing number is false; as we have seen, $\mathfrak{m}_c = \mathfrak{m}-1$ for trees on at least two vertices and complete bipartite graphs and $\mathfrak{m}_c = \mathfrak{m}-2$ for even cycles of length at least six. However, all of these examples are bipartite; we wonder about the relationship between the mixing number and circular mixing number for non-bipartite graphs.

\begin{ques}
Is it true that $\left\lceil \mathfrak{m}_c(G)\right\rceil = \mathfrak{m}(G)$ for every non-bipartite graph $G$?
\end{ques}

Currently, we do not know of any bipartite graphs for which $\mathfrak{m}_c > 2$. We conjecture the following.

\begin{conj}
For every bipartite graph $G$ there exists $q_0$ such that for every $q\geq q_0$, $G$ is $(2q+1,q)$-mixing. 
\end{conj}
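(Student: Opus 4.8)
The plan is to exploit the isomorphism $G_{2q+1,q}\cong C_{2q+1}$ and to recast the conjecture as a statement about winding numbers. Since $(2q+1)/q<4$ for every $q\ge 1$, Lemma~\ref{<4} guarantees that \emph{every} $(2q+1,q)$-colouring of $G$ is constricting, so by Proposition~\ref{samesigma} the quantity $\sigma$ of Definition~\ref{tausigma}, evaluated around any fixed cycle $Z$ of $G$, is invariant along every walk of $\mathscr{C}_{2q+1,q}(G)$; write $\sigma_Z(f)$ for its value. The key structural observation is that each oriented edge of $G$ is sent to a displacement in $\{q,q+1\}$ modulo $2q+1$, since these are the only two edge-types of $C_{2q+1}$. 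Consequently, around a cycle $Z$ of length $r_Z$ one has $\sigma_Z(f)\in[r_Zq,\,r_Zq+r_Z]$, while Proposition~\ref{kdividessigma} forces $(2q+1)\mid\sigma_Z(f)$.

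First I would establish a \emph{forcing lemma}. Because $G$ is bipartite, every cycle $Z$ has even length $r_Z$, and $r_Z(2q+1)/2=r_Zq+r_Z/2$ is then the \emph{unique} multiple of $2q+1$ lying in $[r_Zq,\,r_Zq+r_Z]$ as soon as $2q+1>r_Z$. Taking $q_0$ larger than half the length of the longest cycle of $G$ (a finite invariant, bounded by $|V(G)|/2$), it follows that for all $q\ge q_0$ \emph{every} $(2q+1,q)$-colouring of $G$ realises the same value $\sigma_Z=r_Z(2q+1)/2$ on every cycle $Z$. This generalises verbatim the computation already carried out for even cycles $C_r$ (with $q\ge r/4$) in Section~\ref{Examples}, and it removes the winding number as a possible obstruction to mixing.

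The second, and genuinely hard, step is a \emph{completeness lemma}: that the family $\{\sigma_Z\}_Z$ is a \emph{complete} set of invariants, i.e.\ any two $(2q+1,q)$-colourings agreeing on $\sigma_Z$ for every $Z$ in a cycle basis lie in the same component of $\mathscr{C}_{2q+1,q}(G)$. Proposition~\ref{samesigma} supplies only the easy (invariance) direction. Topologically the desired completeness is the classical statement that homotopy classes of maps $G\to S^1$ are classified by $H^1(G;\mathbb{Z})=\mathrm{Hom}(H_1(G),\mathbb{Z})$, whose coordinates are exactly the windings around a basis of cycles; the work is to transport this to discrete $\times$-homotopy of maps into $C_{2q+1}$ and then back to mixing via Proposition~\ref{colourHom}. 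My attempt would lift colourings to the universal cover $P_\infty$ of $C_{2q+1}$, fix a spanning tree of $G$, and try to reconcile two colourings with matching windings by successively equalising the tree-edge displacements before adjusting the (now essentially constraint-free) non-tree edges.

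I expect the completeness lemma to be the main obstacle, and indeed its absence is precisely why the statement remains a conjecture. The delicate point is that discrete graph homotopy is \emph{a priori} finer than topological homotopy, because a recolouring move at $v$ requires two available colours rather than one. Here the available set at $v$ has size at least $k-(2q-1)d(v)=(2q+1)-(2q-1)d(v)$, which is at most $1$ once $d(v)\ge 2$; so vertices of degree two or more carry essentially no local freedom, and one cannot simply slide colourings around vertex by vertex. The homotopy must therefore proceed through globally coordinated sequences of moves, and proving that equal windings always permit such a sequence—converting the topological heuristic into an explicit recolouring scheme that respects this scarcity of local room—is the crux that keeps the conjecture open.
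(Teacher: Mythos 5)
The statement you are addressing is posed in the paper as an open conjecture; the paper contains no proof of it, so there is nothing to match your attempt against. On its own terms, your proposal does not close the conjecture either, and you say as much: the argument splits into a ``forcing lemma'' and a ``completeness lemma,'' and the second is left entirely unproved. The forcing step is correct and is a clean generalisation of what the paper actually does for even cycles: since $(2q+1)/q<4$, Lemma~\ref{<4} makes every colouring constricting, each edge displacement lies in $\{q,q+1\}$, so $\sigma_Z(f)\in[r_Zq,\,r_Zq+r_Z]$, and once $2q+1>r_Z$ this interval of length $r_Z$ contains the single multiple $r_Z(2q+1)/2$ of $2q+1$ (using that $r_Z$ is even). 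This matches the Claim in the paper's proof that $|I_q(f)|=|I_{q+1}(f)|=r/2$ for $C_r$, and it correctly shows the winding invariants cannot separate colourings for large $q$.

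The genuine gap is the converse: showing that colourings with identical windings on a cycle basis are connected in $\mathscr{C}_{2q+1,q}(G)$. Proposition~\ref{samesigma} gives only invariance, and the topological analogy with $H^1(G;\mathbb{Z})$ does not transfer automatically, precisely because the discrete mixing move needs an \emph{available} colour at $v$, and for a target isomorphic to $C_{2q+1}$ a vertex whose neighbours carry two distinct colours has exactly one available colour (its current one), so it is locally frozen. Your proposed lift to the universal cover and tree-edge equalisation is a reasonable programme, but no mechanism is given for producing the ``globally coordinated sequences of moves'' it would require; even the paper handles only the single even cycle, where it asserts rather than proves the final connectivity step. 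Until that completeness lemma is established, the proposal is a reduction of the conjecture to an equally open statement, not a proof.
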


Graphs which dismantle to $K_2$ are the only examples of graphs that we have satisfying $\mathfrak{M}_c<\mathfrak{M}$. This suggests the following question.

\begin{ques}
Does there exist a graph $G$ which does not dismantle to $K_2$ such that $\mathfrak{M}_c(G)<\mathfrak{M}(G)$?
\end{ques}

We also wonder if the circular mixing threshold can be as small as the circular chromatic number (cf. Proposition~\ref{H_m}) for non-trivial graphs which do not dismantle to $K_2$.

\begin{ques}
Does there exist a graph $G$ which contains at least one edge and does not dismantle to $K_2$ such that $\mathfrak{M}_c(G)=\chi_c(G)$?
\end{ques}

\subsubsection*{Acknowledgement}

The authors wish to thank the anonymous referees for many helpful suggestions to improve the paper.

\bibliographystyle{plain}
\bibliography{MixingPaperNoExt}

\begin{thebibliography}{10}

\bibitem{Paint}
M.~O. Albertson.
\newblock You can't paint yourself into a corner.
\newblock {\em J. Combin. Theory Ser. B}, 73(2):189--194, 1998.

\bibitem{AlbMoore}
M.~O. Albertson and E.~H. Moore.
\newblock Extending graph colorings.
\newblock {\em J. Combin. Theory Ser. B}, 77(1):83--95, 1999.

\bibitem{AlbWest}
M.~O. Albertson and D.~B. West.
\newblock Extending precolorings to circular colorings.
\newblock {\em J. Combin. Theory Ser. B}, 96(4):472--481, 2006.

\bibitem{Belanger}
M.-F. B{\'e}langer, J.~Constantin, and G.~Fournier.
\newblock Graphes et ordonn\'es d\'emontables, propri\'et\'e de la clique fixe.
\newblock {\em Discrete Math.}, 130(1-3):9--17, 1994.
\newblock Graphs and combinatorics (Lyon, 1987; Montreal, PQ, 1988).

\bibitem{BondyHell}
J.~A. Bondy and P.~Hell.
\newblock A note on the star chromatic number.
\newblock {\em J. Graph Theory}, 14(4):479--482, 1990.

\bibitem{BondyMurty}
J.~A. Bondy and U.~S.~R. Murty.
\newblock {\em Graph theory with applications}.
\newblock American Elsevier Publishing Co., Inc., New York, 1976.

\bibitem{PSPACE}
P.~Bonsma and L.~Cereceda.
\newblock Finding paths between graph colourings: {PSPACE}-completeness and
  superpolynomial distances.
\newblock {\em Theoret. Comput. Sci.}, 410(50):5215--5226, 2009.

\bibitem{Complexity}
P.~Bonsma, L.~Cereceda, J.~van~den Heuvel, and M.~Johnson.
\newblock Finding paths between graph colourings: Computational complexity and
  possible distances.
\newblock {\em Electronic Notes in Discrete Mathematics}, 29:463 -- 469, 2007.

\bibitem{BrewsterGraves}
R.~C. Brewster and T.~Graves.
\newblock On the restricted homomorphism problem.
\newblock {\em Discrete Appl. Math.}, 156(14):2823--2826, 2008.

\bibitem{BrewsterNoel}
R.~C. Brewster and J.~A. Noel.
\newblock Extending precolourings of circular cliques.
\newblock {\em Discrete Math.}, 312(1):35--41, 2012.

\bibitem{Winkler}
G.~R. Brightwell and P.~Winkler.
\newblock Gibbs measures and dismantlable graphs.
\newblock {\em J. Combin. Theory Ser. B}, 78(1):141--166, 2000.

\bibitem{Connectedness}
L.~Cereceda, J.~van~den Heuvel, and M.~Johnson.
\newblock Connectedness of the graph of vertex-colourings.
\newblock {\em Discrete Math.}, 308(5-6):913--919, 2008.

\bibitem{Btwn3colourings}
L.~Cereceda, J.~van~den Heuvel, and M.~Johnson.
\newblock Finding paths between 3-colorings.
\newblock {\em Journal of Graph Theory}, 67(1):69--82, 2011.

\bibitem{Gray}
K.~Choo and G.~MacGillivrary.
\newblock Gray code numbers for graphs.
\newblock {\em Ars Math. Contemp.}, 4(1):125--139, 2011.

\bibitem{Dochtermann}
A.~Dochtermann.
\newblock Hom complexes and homotopy theory in the category of graphs.
\newblock {\em European J. Combin.}, 30(2):490--509, 2009.

\bibitem{Dochtermann2}
A.~Dochtermann.
\newblock Homotopy groups of {H}om complexes of graphs.
\newblock {\em J. Combin. Theory Ser. A}, 116(1):180--194, 2009.

\bibitem{El-Z}
M.~El-Zahar and N.~W. Sauer.
\newblock The chromatic number of the product of two {$4$}-chromatic graphs is
  {$4$}.
\newblock {\em Combinatorica}, 5(2):121--126, 1985.

\bibitem{Fieux}
E.~Fieux and J.~Lacaze.
\newblock Foldings in graphs and relations with simplicial complexes and
  posets.
\newblock {\em Discrete Math.}, 312(17), 2012.

\bibitem{Havet}
F.~Havet, R.~J. Kang, T.~M{\"u}ller, and J.-S. Sereni.
\newblock Circular choosability.
\newblock {\em J. Graph Theory}, 61(4):241--270, 2009.

\bibitem{Core}
P.~Hell and J.~Ne{\v{s}}et{\v{r}}il.
\newblock The core of a graph.
\newblock {\em Discrete Math.}, 109(1-3):117--126, 1992.
\newblock Algebraic graph theory (Leibnitz, 1989).

\bibitem{HellNesetril}
P.~Hell and J.~Ne{\v{s}}et{\v{r}}il.
\newblock {\em Graphs and homomorphisms}, volume~28 of {\em Oxford Lecture
  Series in Mathematics and its Applications}.
\newblock Oxford University Press, Oxford, 2004.

\bibitem{Lovasz}
L.~Lov{\'a}sz.
\newblock Operations with structures.
\newblock {\em Acta Math. Acad. Sci. Hungar.}, 18:321--328, 1967.

\bibitem{LovaszK}
L.~Lov{\'a}sz.
\newblock Kneser's conjecture, chromatic number, and homotopy.
\newblock {\em J. Combin. Theory Ser. A}, 25(3):319--324, 1978.

\bibitem{Mohar}
B.~Mohar.
\newblock Choosability for the circular chromatic number, 2003.
\newblock
  http://www.fmf.uni-lj.si/~mohar/Problems/P0201ChoosabilityCircular.html.

\bibitem{Tobias}
T.~M{\"u}ller and R.~J. Waters.
\newblock Circular choosability is rational.
\newblock {\em J. Combin. Theory Ser. B}, 99(5):801--813, 2009.

\bibitem{TwoQs}
S.~Norine.
\newblock On two questions about circular choosability.
\newblock {\em J. Graph Theory}, 58(3):261--269, 2008.

\bibitem{Nowakowski}
R.~Nowakowski and P.~Winkler.
\newblock Vertex-to-vertex pursuit in a graph.
\newblock {\em Discrete Math.}, 43(2-3):235--239, 1983.

\bibitem{Quilliot}
A.~Quilliot.
\newblock Homomorphismes, points fixes, r{\'e}tractions et jeux de poursuite
  dans les graphes, les ensembles ordonn{\'e}s et les espaces m{\'e}triques.
\newblock Th{\`e}se d'Etat, Universit{\'e} de Paris VI, Paris, France, 1983.

\bibitem{mixingsurvey}
J.~van~den Heuvel.
\newblock The complexity of change.
\newblock In {\em Surveys in combinatorics 2013}, volume 409 of {\em London
  Math. Soc. Lecture Note Ser.}, pages 127--160. Cambridge Univ. Press,
  Cambridge, 2013.

\bibitem{Vince}
A.~Vince.
\newblock Star chromatic number.
\newblock {\em J. Graph Theory}, 12(4):551--559, 1988.

\bibitem{Zhu}
X.~Zhu.
\newblock Circular chromatic number: a survey.
\newblock {\em Discrete Math.}, 229(1-3):371--410, 2001.
\newblock Combinatorics, graph theory, algorithms and applications.

\bibitem{CircList}
X.~Zhu.
\newblock Circular choosability of graphs.
\newblock {\em J. Graph Theory}, 48(3):210--218, 2005.

\end{thebibliography}

\end{document}